\newdimen\plusheight
\def\+{\;\lower\plusheight\hbox{$+$}\;}
\newdimen\minusheight
\def\-{\;\lower\minusheight\hbox{$-$}\;}
\newdimen\cdotsheight
\def\cds{\lower\cdotsheight\hbox{$\cdots$}}
\def\leqalignno#1{\displ@y \tabskip\z@ plus\@ne fil
  \halign to\displaywidth{\hfil$\@lign\displaystyle{##}$\tabskip\z@skip
    &$\@lign\displaystyle{{}##}$\hfil\tabskip\z@ plus\@ne fil
    &\kern-\displaywidth\rlap{$\@lign\hbox{\rm##}$}\tabskip\displaywidth\crcr
    #1\crcr}}
\newcommand{\eb}{\begin{equation}}
\newcommand{\ee}{\end{equation}}
\newcommand{\df}{\dfrac}
 \renewcommand{\a}{\alpha}
\renewcommand{\l}{\lambda}
\renewcommand{\(}{\left\(}
\renewcommand{\)}{\right\)}
\renewcommand{\[}{\left\[}
\renewcommand{\]}{\right\]}
\renewcommand{\i}{\infty}
\numberwithin{equation}{section}
 \theoremstyle{plain}
\newtheorem{theorem}{Theorem}[section]
\newtheorem{lemma}[theorem]{Lemma}
\newtheorem{corollary}[theorem]{Corollary}
\newtheorem{example}[]{Example}
\def\proof{\@ifnextchar[{\@oproof}{\@nproof}}
\def\@oproof[#1][#2]{\trivlist\item[\hskip\labelsep\textit{#2 Proof of\
#1.}~]\ignorespaces}
\def\@nproof{\trivlist\item[\hskip\labelsep\textit{Proof.}~]\ignorespaces}
\begin{document}

\textbf{\title[Generalizations of the Andrews-Yee identities]{Generalizations of the Andrews-Yee identities associated with the mock theta functions $\omega(q)$ and $\nu(q)$}
\author{Bruce C.~Berndt, Atul Dixit, Rajat Gupta}
\address{Department of Mathematics, University of Illinois, 1409 West Green
Street, Urbana, IL 61801, USA}\email{berndt@illinois.edu}
\address{Department of Mathematics, Indian Institute of Technology, Gandhinagar, Palaj, Gandhinagar 382355, Gujarat, India}\email{adixit@iitgn.ac.in}
\address{Department of Mathematics, Indian Institute of Technology, Gandhinagar, Palaj, Gandhinagar 382355, Gujarat, India}\email{rajat\_gupta@iitgn.ac.in}}
\thanks{2010 \textit{Mathematics Subject Classification.} Primary 11P81; Secondary 05A17.\\
\textit{Keywords and phrases.} third order mock theta functions, reciprocity theorem, Andrews-Yee identities, partial theta function}

\begin{abstract}
George Andrews and Ae Ja Yee recently established beautiful results involving bivariate generalizations of the third order mock theta functions $\omega(q)$ and $\nu(q)$, thereby extending their earlier results with the second author. Generalizing the Andrews-Yee identities for trivariate generalizations of these mock theta functions remained a mystery, as pointed out by Li and Yang in their recent work. We partially solve this problem and generalize these identities. Several new as well as well-known results are derived. For example, one of our two main theorems gives, as a corollary, a special case of Soon-Yi Kang's three-variable reciprocity theorem. A relation between a new restricted overpartition function $p^{*}(n)$ and a weighted partition function $p_*(n)$ is obtained from one of the special cases of our second theorem.
\end{abstract}
\maketitle
\section{Introduction}
The third order mock theta functions $\omega(q)$ and $\nu(q)$ defined by Watson in his presidential address to the London Mathematical Society \cite[p.~62]{watsonaddress}, and which are actually present in Ramanujan's Lost Notebook \cite[p.~31]{lnb}, \cite[p.~15]{AB5}, are defined by
\begin{align}
\omega(q)&:=\sum_{n=0}^{\infty} \frac{q^{2n^2+2n}}{(q;q^2)^2_{n+1}},\\
\nu(q)&:=\sum_{n=0}^{\infty} \frac{q^{n^2+n}}{(-q;q^2)_{n+1}},
\end{align}
where throughout this article $|q|<1$, and where we use the familiar notation
\begin{align*}
(a)_n &:=(a;q)_{n}:=\prod^{n-1}_{m=0}(1-aq^m),    \qquad  (a;q)_{0}:=1,\\
\intertext{and}
(a)_{\infty}&:=(a;q)_{\infty}:=\lim_{n\to\infty}(a;q)_{n}, \qquad |q|<1.
\end{align*}
Also, define
\begin{equation*}
(a_1,a_2,\dots, a_m;q)_{\infty}
:=(a_1;q)_{\infty}(a_2;q)_{\infty}\cdots(a_m;q)_{\infty}.
\end{equation*}

Recently, G.~E.~Andrews, the second author, and A.~J.~Yee \cite{ady1} introduced new restricted partition functions $p_{\omega}(n)$ and $p_{\nu}(n)$, which are intimately connected, respectively, to $\omega(q)$ and $\nu(q)$. More precisely, if

   \begin{center}
    \begin{align*}
  p_{\omega}(n):=&\text{the number of partitions of $n$ such that all}\\
                 &\text{odd parts are less than twice the smallest part,}\\
   p_{\nu}(n):=& \text{the number of partitions of $n$ into distinct non-negative parts}\\
               & \text{such that all odd parts are less than twice the smallest part},
   \end{align*}
   \end{center}
    they proved that  \cite[Theorems 3.1, 4,1]{ady1}
\begin{align}
\sum_{n=1}^{\infty}p_{\omega}(n)q^n&=\sum_{n=1}^{\infty} \frac{q^n}{(q^{n};q)_{n+1} (q^{2n+2};q^2)_{\infty}}
=q\, \omega(q),\label{pwn}\\
\sum_{n=0}^{\infty}p_{\nu}(n)q^n&=\sum_{n=0}^{\infty}q^n(-q^{n+1};q)_{n} (-q^{2n+2};q^2)_{\infty}
=\nu(-q).\label{pnn}
\end{align}
(The word `non-negative' is missing in the definition in \cite{ady1}, but it ought to be present since the sum on the right-hand side of \eqref{pnn} starts from $n=0$. This is also pointed out in \cite[Equation (5.8)]{andrewsevenbelowodd}.)
	In the same paper, they also obtained new analogues of Euler's pentagonal number theorem by proving that \cite[Theorems 5.1, 5.3]{ady1}
\begin{align}
\sum_{n=1}^{\infty} \frac{q^n}{(-q^n;q)_{n+1}(-q^{2n+2};q^2)_{\infty}}&=\sum_{j=0}^{\infty} (-1)^j q^{6j^2+4j+1}(1+q^{4j+2}),\label{epnt1}\\
\sum_{n=0}^{\infty} q^n (q^{n+1};q)_n (q^{2n+2};q^2)_{\infty} &=\sum_{j=0 }^{\infty} (-1)^j q^{j(3j+2)} (1+q^{2j+1}).\label{epnt2}
\end{align}

Andrews \cite[Section 3]{geaquart} defined two-variable generalizations of several of the third order mock theta functions including those of $\omega(q)$ and $\nu(q)$. In particular, he defined
\begin{align}
\omega(z; q)&:=\sum_{n=0}^{\infty} \frac{z^nq^{2n^2+2n}}{(q;q^2)_{n+1}(zq;q^2)_{n+1}},\label{omegaqz}\\
\nu(z; q)&:=\sum_{n=0}^{\infty} \frac{q^{n^2+n}}{(-zq;q^2)_{n+1}}.
\end{align}
(Note that $\omega(1;q)=\omega(q)$ and $\nu(1;q)=\nu(q)$.)
In a subsequent paper \cite{geaquart2}, Andrews proved that
\begin{align*}
\omega(z;q)&=\sum_{n=1}^{\infty}\frac{z^{n-1}q^{n-1}}{(q;q^2)_{n}},\\
\nu(z;q)&=\sum_{n=0}^{\infty}(q/z;q^2)_n(-zq)^n.
\end{align*}

Andrews and Yee \cite{andrewsyeemock} recently considered another generalization of $\nu(q)$, namely,
\begin{align}\label{nuqz2}
\nu_1(z; q):=\sum_{n=0}^{\infty} \frac{z^nq^{n^2+n}}{(-q;q^2)_{n+1}}.
\end{align}
(Observe that $\nu_1(1;q)=\nu(q)$.)
They then obtained the following beautiful two-variable generalizations \cite[Theorem 1]{andrewsyeemock} of \eqref{pwn}, \eqref{pnn}, \eqref{epnt1} and \eqref{epnt2}:
\begin{align}
\sum_{n=1}^{\infty} \frac{q^n}{(zq^{n};q)_{n+1} (zq^{2n+2};q^2)_{\infty}}
&=q\, \omega(z; q),\label{pwnz}\\
\sum_{n=0}^{\infty}q^n(-zq^{n+1};q)_{n} (-zq^{2n+2};q^2)_{\infty}
&=\nu_1(z; -q).\label{pnnz}
\end{align}
Andrews and Yee  established \eqref{pwnz} and \eqref{pnnz} by proving that in each case the coefficients of $z^{N}$, $N\geq 1,$ in the power series expansions of the left- and right-hand sides are equal. 
Andrews and Yee  also expressed the difficulty in obtaining  bijective proofs of their results.

Recently, F.~Z.~K.~Li and J.~Y.~X.~Yang \cite{liyangmock} overcame this difficulty and provided bijective proofs of \eqref{pwnz} and \eqref{pnnz}. In the same paper, Li and Yang also gave three-variable generalizations of $\omega(z;q)$ and $\nu(z;q)$, namely,
\begin{align}
\omega_0(y,z; q)&:=\sum_{n=0}^{\infty} \frac{y^{n}z^nq^{2n^2+2n}}{(yq;q^2)_{n+1}(zq;q^2)_{n+1}},\label{omegaqzy}\\
\nu_0(y, z; q)&:=\sum_{n=0}^{\infty} \frac{y^{n}z^nq^{n^2+n}}{(yq;q^2)_{n+1}}.\label{nuqzy}
\end{align}
It might be noted that Y.-S.~Choi \cite{choi} had previously defined three-variable generalizations of most of Ramanujan's mock theta functions as well as of some of Andrews' two-variable generalizations. Choi's  three-variable generalizations are at the same level of generality as those of Li and Yang. In fact,
\begin{align}\label{equivalence}
\omega_0\left(\frac{z^2}{q^2},\frac{\a^2z^2}{q^4}; q\right)&=\frac{q^4}{z^4}\omega_{*}(\alpha,z;q), \nonumber\\
	\nu_0\left(-\frac{\a^2z^2}{q^4}, -\frac{q^2}{\a^2}; q\right)&=\nu_{*}(\alpha,z;q),
	\end{align}
	where $\omega_{*}(\alpha,z;q)$ and $\nu_{*}(\alpha,z;q)$ are Choi's trivariate generalizations of the mock theta functions defined by \cite[p.~347]{choi}
	\begin{align}\label{choiomeganu}
\omega_{*}(\alpha,z;q)&:=\sum_{n=0}^{\infty}\frac{q^{2(n-1)^2-6}\a^{2n}z^{4(n+1)}}{(z^2/q;q^2)_{n+1}(\a^2z^2/q^3;q^2)_{n+1}},\nonumber\\
\nu_{*}(\alpha,z;q)&:=\sum_{n=0}^{\infty}\df{q^{n^2-n}z^{2n}}{(-\alpha^2z^2/q^3;q^2)_{n+1}}.
\end{align}
Observe that Li and Yang's definitions \eqref{omegaqzy} and \eqref{nuqzy} are symmetric in $y$ and $z$.  (We have used a slightly different notation for Li and Yang's definitions in order to distinguish them from Choi's definitions in \eqref{choiomeganu}.)

The primary goal of our paper is to generalize the Andrews-Yee identities in \eqref{pwnz} and \eqref{pnnz}. We now explain why it is important to seek these generalizations. Note that $\omega_0(y,z; q)$, defined in \eqref{omegaqzy} can be easily written in terms of the function
\begin{equation*}
G(a, b; q):=\sum_{n=0}^{\infty}\frac{a^nb^nq^{n^2}}{(aq)_n(bq)_n}.
\end{equation*}
More precisely,
\begin{equation}\label{omegaG}
\omega_0(y, z;q)=\frac{1}{(1-yq)(1-zq)}G(yq,zq;q^2).
\end{equation}
In his Lost Notebook \cite{lnb}, Ramanujan obtained beautiful identities for $G(a, b;q)$ \cite[Chapter II]{yesto}. See \cite[p.~202]{lnb}, \cite[p.~89]{AB2} and \cite[p.~13, Equation (12.3)]{fine} for additional representations.

Also, it is known that \cite[p.~351]{choi}
\begin{equation*}
G(x, x^{-1}q; q)=(1-x)(1-x^{-1}q)g_3(x,q),
\end{equation*}
where $g_3(x,q)$ is the universal mock theta function defined by
\begin{equation*}
g_3(x,q):=\sum_{n=0}^{\infty}\dfrac{q^{n(n+1)}}{(x;q)_{n+1}(q/x;q)_{n+1}}.
\end{equation*}
Thus obtaining a generalization of the identity \eqref{pwnz} of Andrews and Yee for $\omega_0(y,z; q)$  might be very crucial in that it may lead to partition-theoretic interpretations of mock theta functions of odd order, which are special cases of $g_3(x,q)$, as explicated in \cite[Section 5]{gordmac}. However, at the end of their paper \cite{liyangmock}, Li and Yang surmise that finding generalizations of \eqref{pwnz} and \eqref{pnnz} for
$\omega_0(y,z; q)$ and $\nu_0(y, z; q)$ remains mysterious. While we have not been able to fully resolve this mystery, we have partially resolved it. Before we give our main results in this direction, it is important to note the possible obstacles.

Observe that the placing of the extra variable $z$ on the left-hand sides of the Andrews-Yee identities \eqref{pwnz} and \eqref{pnnz} was natural in view of the goal of simultaneously generalizing \eqref{pwn}, \eqref{pnn}, \eqref{epnt1} and \eqref{epnt2}. The corresponding right-hand sides too turned out to be natural generalizations of the mock theta functions $\omega(q)$ and $\nu(q)$.

However, the situation is not so obvious while generalizing the Andrews-Yee identities with an extra variable, say, $\a$, because of its possible appearance at unbeknownst places. First of all, there are two ways of generalizing \eqref{pwnz} and \eqref{pnnz} -- one, beginning with the right-hand sides consisting of $\omega(z;q)$ and $\nu(z;q)$, and second, beginning with the left-hand sides which generate the restricted partition functions $p_{\omega}(n)$ and $p_{\nu}(n)$. As far as the latter is concerned, there have already been generalizations of \eqref{pwnz} and \eqref{pnnz}. For example, Wang and Ma \cite[Theorem 1.2]{wangma} have obtained nice identities for
\begin{align*}
&\sum_{n=1}^{\infty} \frac{y^{n-1}}{(zq^{n};q)_{n+1} (zq^{2n+2};q^2)_{\infty}},\\
&\sum_{n=0}^{\infty} y^n(-zq^{n+1};q)_{n+1} (-zq^{2n+2};q^2)_{\infty},
\end{align*}
which, for $y=q$, (essentially) reduce to \eqref{pwnz} and \eqref{pnnz}. However, while these are interesting identities in the theory of basic hypergeometric series, with $y^n$ replacing $q^n$ at each occurrence, the partition-theoretic notion of $n$ representing the smallest part gets compromised, and hence combinatorially it is difficult to interpret them.

Our goal here is to retain the notion of $n$ denoting the smallest part of a partition and then to generalize \eqref{pwnz} and \eqref{pnnz}. In this regard, the three-variable generalizations of the mock theta functions $\omega(q)$ and $\nu(q)$ that we investigate are defined by
	\begin{align}
	\omega_{1}(\a, z; q):&=\sum_{n=0}^{\infty}\frac{q^{2n}}{(-zq;q^2)_{n+1}(zq/\a;q^{2})_{n+1}},\label{2b}\\
	\nu(\a, z; q) :&=
\sum_{n=0}^{\infty}\frac{\a^{n}q^{n^{2}+n}}{(-zq;q^2)_{n+1}}.\label{1}
	\end{align}
Observe that
\begin{align}
\omega_1(1, z; q)&=\omega(z^2;q^2),\label{omega1omega}\\
	\nu(1, z; q) &=\nu(z;q),\nonumber\\
	\nu(z, 1; q)&=\nu_1(z;q),
  	\end{align}
	where $\omega(z;q)$, $\nu(z;q)$, and  $\nu_1(z;q)$ are defined in \eqref{omegaqz}, \eqref{nuqz2}, and \eqref{omegaqzy}, respectively, and where \eqref{omega1omega} is easily derived by using Andrews' identity \cite[p.~24]{yesto}
	\begin{equation*}
	\sum_{n=0}^{\infty}\frac{a^nb^nq^{n^2+2n}}{(aq)_{n+1}(bq)_{n+1}}=\sum_{n=0}^{\infty}\frac{a^nq^n}{(bq)_{n+1}}.
	\end{equation*}
We introduce the two trivariate generalizations \eqref{2b} and \eqref{1} of $\omega(q)$ and $\nu(q)$, respectively, because they are linked by a nice functional relation given below.

\begin{theorem}\label{newrelation}
For $|q|<1$,
\begin{align}\label{newrelationeqn}
\nu{(\a, z; q)} =\df{1}{(-zq;q^2)_{\infty}(zq/\a;q^{2})_{\infty}}\sum_{n=0}^{\infty}\a^nq^{n^2+n} -\frac{zq}{\a}\omega_{1}(\a, z; q).
\end{align}
\end{theorem}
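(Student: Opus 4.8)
The plan is to expand both $\nu(\a,z;q)$ and $\omega_1(\a,z;q)$ into double series built out of \emph{partial sums} of Euler's $q$-exponential series $\sum_{k\ge0}(zq/\a)^{k}/(q^2;q^2)_k=1/(zq/\a;q^2)_{\infty}$: the expansion of $\nu$ will carry the initial segment $\sum_{k=0}^{m}$, that of $\frac{zq}{\a}\omega_1$ the complementary tail $\sum_{k\ge m+1}$, and adding the two will restore the full $q$-exponential, hence produce the infinite product $1/\bigl((-zq;q^2)_{\infty}(zq/\a;q^2)_{\infty}\bigr)$ together with the partial theta function $\sum_{n\ge0}\a^{n}q^{n^{2}+n}$ on the right of \eqref{newrelationeqn}.

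First I would treat $\nu(\a,z;q)=\sum_{n\ge0}\a^{n}q^{n^{2}+n}/(-zq;q^2)_{n+1}$. Writing $1/(-zq;q^2)_{n+1}=(-zq^{2n+3};q^2)_{\infty}/(-zq;q^2)_{\infty}$ and expanding the numerator by Euler's identity $(-x;q^2)_{\infty}=\sum_{k\ge0}q^{k(k-1)}x^{k}/(q^2;q^2)_k$ with $x=zq^{2n+3}$, one obtains a double sum over $n$ and $k$ whose exponent of $q$, after using $n^{2}+2nk+k^{2}=(n+k)^{2}$, equals $(n+k)^{2}+(n+k)+k$. Re-indexing by $m=n+k$ then yields
\begin{equation*}
\nu(\a,z;q)=\frac{1}{(-zq;q^2)_{\infty}}\sum_{m\ge0}\a^{m}q^{m^{2}+m}\sum_{k=0}^{m}\frac{(zq/\a)^{k}}{(q^2;q^2)_k}.
\end{equation*}

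Next I would expand $\omega_1(\a,z;q)=\sum_{n\ge0}q^{2n}/\bigl((-zq;q^2)_{n+1}(zq/\a;q^2)_{n+1}\bigr)$ in the same spirit: replace $1/(-zq;q^2)_{n+1}$ as above and expand it by Euler's identity, and expand $1/(zq/\a;q^2)_{n+1}=\sum_{j\ge0}\binom{n+j}{j}_{q^2}(zq/\a)^{j}$ by the $q$-binomial theorem. The inner sum over $n$ is then evaluated by $\sum_{n\ge0}\binom{n+j}{j}_{q^2}t^{n}=1/(t;q^2)_{j+1}$ with $t=q^{2i+2}$, and the telescoping relation $(q^2;q^2)_i\,(q^{2i+2};q^2)_{j+1}=(q^2;q^2)_{i+j+1}$ allows the substitution $\ell=i+j$, giving
\begin{equation*}
\omega_1(\a,z;q)=\frac{1}{(-zq;q^2)_{\infty}}\sum_{\ell\ge0}\frac{(zq/\a)^{\ell}}{(q^2;q^2)_{\ell+1}}\sum_{m=0}^{\ell}\a^{m}q^{m^{2}+m}.
\end{equation*}
Multiplying by $zq/\a$, interchanging the order of the two summations (sum over $m$ first, then $\ell\ge m$), and putting $k=\ell+1$ converts the prefactor $1/(q^2;q^2)_{\ell+1}$ into the tail of the $q$-exponential:
\begin{equation*}
\frac{zq}{\a}\,\omega_1(\a,z;q)=\frac{1}{(-zq;q^2)_{\infty}}\sum_{m\ge0}\a^{m}q^{m^{2}+m}\sum_{k\ge m+1}\frac{(zq/\a)^{k}}{(q^2;q^2)_k}.
\end{equation*}

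Finally, adding the last two displays and invoking $\sum_{k\ge0}(zq/\a)^{k}/(q^2;q^2)_k=1/(zq/\a;q^2)_{\infty}$ produces
\begin{equation*}
\nu(\a,z;q)+\frac{zq}{\a}\,\omega_1(\a,z;q)=\frac{1}{(-zq;q^2)_{\infty}(zq/\a;q^2)_{\infty}}\sum_{m\ge0}\a^{m}q^{m^{2}+m},
\end{equation*}
which is \eqref{newrelationeqn}. All the rearrangements are legitimate for $|q|<1$ and $|zq/\a|<1$, where the double and triple series converge absolutely (the Gaussian-type factors $q^{n^{2}}$ and $q^{2n}$ force this), and the identity then extends to all admissible $\a,z$ by analytic continuation --- equivalently, it may be read throughout as an identity of formal power series in $z$. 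I expect the main obstacle to be purely organizational: correctly collapsing the triple series for $\omega_1$ by pairing the $q$-binomial evaluation with the telescoping product, and, at a conceptual level, recognizing at the outset that $\nu$ and $\omega_1$ decompose exactly along the partial sums of one and the same $q$-exponential series.
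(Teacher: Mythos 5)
Your argument is correct, and it is a genuinely different route from the paper's. The paper simply quotes a (corrected) identity from Fine's book, equation (8.2) — equivalently Entry 6.3.1 in Ramanujan's lost notebook — replaces $q$ by $q^2$, and specializes $u=-zq/\alpha$, $b=-zq$; the whole proof is a two-line substitution resting on that external result. You instead prove the needed special case of Fine's identity from scratch: your two displays, expressing $\nu(\alpha,z;q)$ and $\tfrac{zq}{\alpha}\omega_1(\alpha,z;q)$ as sums of $\alpha^m q^{m^2+m}$ weighted respectively by the initial segment $\sum_{k=0}^{m}(zq/\alpha)^k/(q^2;q^2)_k$ and the complementary tail $\sum_{k\geq m+1}$ of the $q$-exponential, are exactly right (I checked the exponent bookkeeping $n^2+n+k^2+2k+2nk=(n+k)^2+(n+k)+k$, the $q$-binomial evaluation $\sum_{n\geq0}\binom{n+j}{j}_{q^2}q^{(2i+2)n}=1/(q^{2i+2};q^2)_{j+1}$, and the telescoping $(q^2;q^2)_i(q^{2i+2};q^2)_{j+1}=(q^2;q^2)_{i+j+1}$), and adding them reassembles the full Euler series $1/(zq/\alpha;q^2)_{\infty}$. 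What your approach buys is self-containedness and a structural explanation of \emph{why} the relation holds — $\nu$ and $\omega_1$ split one and the same $q$-exponential into head and tail — at the cost of more series manipulation; what the paper's approach buys is brevity and an explicit link to Fine/Ramanujan. Your convergence remarks ($|q|<1$, $|zq/\alpha|<1$, then analytic continuation or formal power series in $z$) are adequate and in fact more careful than the paper, which only assumes globally that all variables are chosen so that everything is analytic.
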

Note that using
\begin{equation*}
\sum_{n=0}^{\infty}q^{n^2+n}=(q^2;q^2)_{\infty}(-q^2;q^2)_{\infty}^{2},
\end{equation*}
which is an easy consequence of the Jacobi triple product identity (see \eqref{jtpi} below), we easily obtain
the special case $\a=1$ of the functional relation obtained by Andrews in \cite[p.~78, Equation (3c)]{geaquart}, namely,
\begin{equation}\label{ramanujanomeganuz}
\nu(z;q)=\frac{(q^2;q^2)_{\infty}(-q^2;q^2)_{\infty}^2}{(z^2q^2;q^4)_{\infty}}-zq\omega(z^2;q^2).
\end{equation}
Furthermore, letting $z=1$ in \eqref{ramanujanomeganuz} gives Ramanujan's relation between $\omega(q)$ and $\nu(q)$ \cite[p.~62, Equation (26.88)]{fine}, that is,
\begin{equation}\label{ramanujanomeganu}
\nu(q)=(q^2;q^2)_{\infty}(-q^2;q^2)_{\infty}^3-q\omega(q^2).
\end{equation}
It should be noted that \eqref{ramanujanomeganu} is crucially used in \cite[p.~6]{ady1} in establishing \eqref{pwn}. Moreover, in our direct proof of \eqref{pwnz}, we need \eqref{ramanujanomeganuz}. Hence, to realize our goal of generalizing \eqref{pwnz} and \eqref{pnnz}, it is natural to consider  $\omega_1(\a, z;q)$ instead of $\omega_0(\a, z;q)$. Also, the other trivariate generalization of $\nu(q)$ that we study, namely, $\nu(\a, z; q)$, is the same as that considered by Li and Yang \cite{liyangmock} and also by Choi \cite{choi}, in view of \eqref{equivalence}, since
\begin{equation*}
\nu(\a, z; q)=\nu_{0}(-z,-\a/z;q).
\end{equation*}
We are now ready to give generalizations of the Andrews-Yee identities \eqref{pwnz} and \eqref{pnnz}. We begin with that of \eqref{pwnz}.
\begin{theorem}\label{theorem3}
Let $\omega_1(\alpha,z;q)$ be defined by \eqref{2b}.  Then
\begin{align}\label{result1}
\sum_{m=1}^{\infty}&\frac{(z^2q^2/\a;q^2)_{m-1}q^{2m}}{(-zq/\a;q)_{2m}(zq;q)_{2m}(-zq^{2m+2}/\a;q^2)_\infty(zq^{2m+2};q^2)_\infty}\nonumber\\
=&\frac{q^2(z^2q^2/\a;q^2)_\infty}{(zq;q)_\infty(-zq/\a;q)_\infty}\omega_{1}(\a,z;q)+F(a,z;q),
\end{align}
where
\begin{align}\label{F}
F(\alpha,z;q):=&
\frac{1}{(-zq^2/ \a;q^2)_\infty(zq^2;q^2)_\infty}\Bigg(\frac{q^2}{(1+zq/\a)}
\sum_{m =0}^{\infty}\frac{(-\a/(zq);q^2)_{m+1}}{(-zq;q^2)_{m+1}}(zq)^m\notag\\
&-\frac{\a q}{z}\frac{(z^2q^2/\a;q^2)_\infty}{(z^2q^2;q^4)_{\infty}(z^2q^2/\a^2;q^{4})_{\infty}}\sum_{n=0}^{\infty}\a^nq^{n^2+n}\Bigg).
\end{align}
\end{theorem}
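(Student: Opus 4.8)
The plan is to collapse the left-hand side of \eqref{result1} to a single basic hypergeometric series, transform that series by a Rogers--Fine/Heine-type identity, and then invoke Theorem~\ref{newrelation}. First, separating odd- and even-indexed factors gives $(-zq/\a;q)_{2m}=(-zq/\a;q^2)_m(-zq^2/\a;q^2)_m$ and $(zq;q)_{2m}=(zq;q^2)_m(zq^2;q^2)_m$, while $(-zq^{2m+2}/\a;q^2)_\infty=(-zq^2/\a;q^2)_\infty/(-zq^2/\a;q^2)_m$ and $(zq^{2m+2};q^2)_\infty=(zq^2;q^2)_\infty/(zq^2;q^2)_m$. The factors $(-zq^2/\a;q^2)_m$ and $(zq^2;q^2)_m$ cancel, and after the shift $m\mapsto n+1$ the left side of \eqref{result1} becomes
\begin{equation*}
\frac{q^2}{(-zq^2/\a;q^2)_\infty(zq^2;q^2)_\infty}\,\Phi(\a,z;q),\qquad
\Phi(\a,z;q):=\sum_{n=0}^{\infty}\frac{(z^2q^2/\a;q^2)_n\,q^{2n}}{(-zq/\a;q^2)_{n+1}(zq;q^2)_{n+1}}.
\end{equation*}
Since $(zq;q)_\infty(-zq/\a;q)_\infty=(zq;q^2)_\infty(zq^2;q^2)_\infty(-zq/\a;q^2)_\infty(-zq^2/\a;q^2)_\infty$, multiplying out shows that \eqref{result1} is equivalent to a relation expressing $\Phi(\a,z;q)$ in terms of $\omega_1(\a,z;q)$ and the two pieces of $F$ in \eqref{F}.

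Rather than aim at that relation directly, I would establish the equivalent but cleaner \emph{core identity}
\begin{equation*}
\Phi(\a,z;q)=\frac{1}{1+zq/\a}\sum_{m=0}^{\infty}\frac{(-\a/(zq);q^2)_{m+1}}{(-zq;q^2)_{m+1}}(zq)^m-\frac{\a}{zq}\cdot\frac{(z^2q^2/\a;q^2)_\infty}{(zq;q^2)_\infty(-zq/\a;q^2)_\infty}\,\nu(\a,z;q),
\end{equation*}
in which $\nu(\a,z;q)$ takes the place of $\omega_1(\a,z;q)$. Here $\nu(\a,z;q)=\sum_{n\ge0}\a^nq^{n^2+n}/(-zq;q^2)_{n+1}$ is of Rogers--Fine ``quadratic-exponent'' type and the first series on the right is of the companion ``flat'' type, so a single application of the Rogers--Fine identity \cite{fine} --- or an equivalent chain of Heine transformations --- should rewrite both sums over a common set of Pochhammer symbols, and their difference can then be identified with $\Phi(\a,z;q)$, for instance by a further telescoping or by the $q$-binomial theorem; equivalently, one may transform $\Phi$ directly and recognize the two pieces. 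The reason the transformation closes up is that the numerator parameter $z^2q^2/\a$ of $\Phi$ is, up to sign, the product $(-zq/\a)(zq)$ of the two leading parameters in its denominator.

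To conclude, I would apply Theorem~\ref{newrelation} to the core identity, replacing $\nu(\a,z;q)$ by $\big((-zq;q^2)_\infty(zq/\a;q^2)_\infty\big)^{-1}\sum_{n\ge0}\a^nq^{n^2+n}-(zq/\a)\,\omega_1(\a,z;q)$. The $-(zq/\a)\,\omega_1(\a,z;q)$ term, multiplied by the coefficient $-\tfrac{\a}{zq}\,(z^2q^2/\a;q^2)_\infty/\big((zq;q^2)_\infty(-zq/\a;q^2)_\infty\big)$, becomes exactly $\tfrac{(z^2q^2/\a;q^2)_\infty}{(zq;q^2)_\infty(-zq/\a;q^2)_\infty}\,\omega_1(\a,z;q)$, while the remaining partial-theta term, simplified via the elementary product identities $(z^2q^2;q^4)_\infty=(zq;q^2)_\infty(-zq;q^2)_\infty$ and $(z^2q^2/\a^2;q^4)_\infty=(zq/\a;q^2)_\infty(-zq/\a;q^2)_\infty$, becomes the second summand of $F$ in \eqref{F}. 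Multiplying back by $q^2/\big((-zq^2/\a;q^2)_\infty(zq^2;q^2)_\infty\big)$ then yields \eqref{result1}. All the series converge absolutely and every manipulation is legitimate for $|q|<1$ with $z$ near $0$ and $\a$ outside a discrete set, so the identity holds for general $z$ and $\a$ by analytic continuation.

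I expect the main obstacle to be the Rogers--Fine/Heine step for the core identity: one must pin down the exact variant of the transformation that produces $\nu(\a,z;q)$ --- equivalently, via Theorem~\ref{newrelation}, the function $\omega_1(\a,z;q)$ and not $\omega_1(\a,-z;q)$, the mock theta functions being neither even nor odd in $z$ --- together with a remainder that is precisely the ${}_2\phi_1$-type sum occurring in $F$. Since each term of the core identity carries a simple pole at $z=0$ coming from the factor $\a/(zq)$, the cancellation of these poles is a convenient built-in check, and the specialization $\a=1$ --- where the core identity must reduce to the $q\mapsto q^2$, $z\mapsto z^2$ instance of the Andrews--Yee identity \eqref{pwnz} and $F(1,z;q)=0$ --- is a further useful guide.
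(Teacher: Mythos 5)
Your reduction of the theorem to a ``core identity'' is exactly right, and it mirrors the paper's architecture: the left side of \eqref{result1} does collapse (after splitting $(\cdot;q)_{2m}$ into even and odd pieces and shifting $m\mapsto n+1$) to $q^2\Phi(\a,z;q)/\big((-zq^2/\a;q^2)_\infty(zq^2;q^2)_\infty\big)$, your stated core identity for $\Phi$ is correct, and your final assembly --- substituting Theorem \ref{newrelation} for $\nu(\a,z;q)$ and using $(z^2q^2;q^4)_\infty=(zq;q^2)_\infty(-zq;q^2)_\infty$, $(z^2q^2/\a^2;q^4)_\infty=(zq/\a;q^2)_\infty(-zq/\a;q^2)_\infty$ --- reproduces \eqref{result1} and \eqref{F} exactly. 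All of that bookkeeping checks out.

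The genuine gap is the core identity itself, which is the heart of the proof and which you leave unproved. You propose to get it from ``a single application of the Rogers--Fine identity'' or a chain of Heine transformations, followed by ``telescoping or the $q$-binomial theorem,'' and you candidly flag this as the main obstacle. That is not a proof: the core identity is a \emph{three-term} relation connecting a ${}_2\phi_1$-type series $\Phi$, the partial-theta-type series $\nu(\a,z;q)$ with exponent $q^{n^2+n}$, and a ``flat'' companion series, whereas Rogers--Fine and Heine are two-term transformations; there is no single standard transformation that produces this configuration. What actually delivers it is the identity of Andrews quoted in Section \ref{prelim} as \eqref{gea90_thm1} (Theorem 1 of his 1981 partial theta functions paper, which the present paper calls ``deep''): replace $q$ by $q^2$, set $B=z^2q^2/\a$, $a=zq/\a$, $b=-zq$, and let $A\to0$ using $\lim_{A\to0}(1/A;q^2)_mA^m=(-1)^mq^{m(m-1)}$; the first sum on the right of \eqref{gea90_thm1} then becomes $\nu(\a,z;q)$ and the second becomes your flat series, and dividing by $(1+zq/\a)(1-zq)$ gives precisely your core identity. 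Without invoking \eqref{gea90_thm1} (or reproving its $A=0$ case, which is itself a substantial partial-theta identity), your argument does not close.
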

On the other hand, the generalization of \eqref{pnnz} that we obtain is now given.

\begin{theorem}\label{theorem4}
Let $\nu(\a, z; q)$ be defined in \eqref{1}. Then,
\begin{align}\label{gsayeqn1}
\sum_{n=0}^{\i}\frac{(-zq;q)_{2n}(-zq^{2n+2};q^2)_\i}{\left( -zq/\a;q\right)_{n}}q^n   =& -\frac{\a}{z}\frac{(-zq;q)_{\i}}{\left( -zq/\a;q\right)_{\i}}\nu\left(\frac{\a^2}{z},-\frac{\a^2}{z};-q\right)\nonumber\\
&+(-zq^2;q^2)_{\infty}\sum_{n=0}^{\infty}\frac{\left(-\a/z;q\right)_{n+1}\a^{n}q^{n(n+1)/2}}{\left(-\a^2 q/z;q^2\right)_{n+1}}.
\end{align}
\end{theorem}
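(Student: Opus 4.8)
The first step is to reduce the left side of \eqref{gsayeqn1} to its essential content. Since $(-zq;q)_{2n}(-zq^{2n+2};q^2)_\infty=(-zq;q^2)_n(-zq^2;q^2)_\infty$, the left side equals $(-zq^2;q^2)_\infty\sum_{n=0}^{\infty}(-zq;q^2)_n q^n/(-zq/\alpha;q)_n$. Dividing through by $(-zq^2;q^2)_\infty$ and using $(-zq;q)_\infty=(-zq;q^2)_\infty(-zq^2;q^2)_\infty$ to recast the two product prefactors on the right, Theorem~\ref{theorem4} is seen to be equivalent to the reciprocity-type identity
\begin{equation*}
\sum_{n=0}^{\infty}\frac{(-zq;q^2)_n}{(-zq/\alpha;q)_n}\,q^n
=-\frac{\alpha}{z}\,\frac{(-zq;q^2)_\infty}{(-zq/\alpha;q)_\infty}\,\nu\!\left(\frac{\alpha^2}{z},-\frac{\alpha^2}{z};-q\right)
+\sum_{n=0}^{\infty}\frac{(-\alpha/z;q)_{n+1}\,\alpha^n q^{n(n+1)/2}}{(-\alpha^2 q/z;q^2)_{n+1}}.
\end{equation*}
The word ``reciprocity'' is accurate here: a naive expansion of the right side in powers of $z$ produces negative powers of $z$ that must cancel between the two summands, the classical hallmark of a Ramanujan-type reciprocity theorem, and in the case $\alpha=1$ the identity becomes, via \eqref{pnnz}, a reciprocity relating $\nu_1(z;-q)$ to $\nu(1/z,-1/z;-q)$.

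The natural attack on the reduced identity is to expand one of the infinite products and then iterate the Rogers--Fine identity, which is the classical device by which a partial theta function gets peeled off a mock-theta-type series. Writing $(-zq;q)_{2n}(-zq^{2n+2};q^2)_\infty=(-zq;q)_\infty/(-zq^{2n+1};q^2)_\infty$ and expanding $1/(-zq^{2n+1};q^2)_\infty=\sum_{k\ge 0}(-1)^k z^k q^{(2n+1)k}/(q^2;q^2)_k$, an interchange of summations recasts the left side of \eqref{gsayeqn1} as $(-zq;q)_\infty\sum_{k\ge 0}\frac{(-1)^k z^k q^k}{(q^2;q^2)_k}\sum_{n\ge 0}\frac{q^{(2k+1)n}}{(-zq/\alpha;q)_n}$. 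The inner sum is the $a\to 0$ case of the Rogers--Fine identity, $\sum_{n\ge 0}\tau^n/(b;q)_n=\sum_{n\ge 0}b^n\tau^n q^{n^2-n}/\big((b;q)_n(\tau;q)_{n+1}\big)$, applied with $b=-zq/\alpha$ and $\tau=q^{2k+1}$. Substituting, reversing the order of summation, and simplifying $1/\big((q^2;q^2)_k(q^{2k+1};q)_{n+1}\big)=(q;q^2)_k/(q;q)_{2k+n+1}$ leaves an outer sum over $n$ carrying the quadratic exponent $q^{n^2+n}$ together with an inner $k$-series; a further summation of that $k$-series, followed by an appeal to Euler's identity $(-x;q)_\infty=\sum_j q^{\binom j2}x^j/(q;q)_j$ and the Jacobi triple product identity, should separate the result into precisely the two pieces on the right side. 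The piece bearing the even exponents $q^{n^2+n}$ reassembles, after the sign-forced substitution $q\mapsto -q$, into $-\frac{\alpha}{z}(-zq;q^2)_\infty(-zq/\alpha;q)_\infty^{-1}\nu(\alpha^2/z,-\alpha^2/z;-q)$, while the complementary telescoping part becomes the partial-theta series $\sum_{n}(-\alpha/z;q)_{n+1}\alpha^n q^{n(n+1)/2}/(-\alpha^2 q/z;q^2)_{n+1}$.

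I expect the main obstacle to be the clash of bases. The numerator $(-zq;q^2)_n$ lives in base $q^2$, whereas the denominator $(-zq/\alpha;q)_n$ and the weight $q^n$ live in base $q$, so no single off-the-shelf transformation applies to the reduced sum, and the expand--interchange--resum scheme above must be organized so that at each stage the series actually being summed is one governed by a pure base-$q$ or pure base-$q^2$ summation. In practice this forces a parity split of one of the intermediate summations, or the introduction of an auxiliary lemma rewriting the mixed-base summand as a combination of base-$q$ hypergeometric terms; carrying this out cleanly is the technical heart of the proof.

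Once past that, the remaining work is careful bookkeeping: tracking the substitution $q\mapsto -q$ so that the residual series is recognized \emph{exactly} as $\nu(\alpha^2/z,-\alpha^2/z;-q)$ — Theorem~\ref{newrelation} applied with base $-q$ supplies a convenient alternative form of that term — and matching the infinite-product prefactors. The specializations $\alpha=1$ (which must reproduce \eqref{pnnz}) and $z=1$ serve as running consistency checks, and it is further specializations such as $\alpha=z$ or $\alpha=-1$ that yield the overpartition corollary advertised in the introduction. One could instead attempt to deduce Theorem~\ref{theorem4} from Theorem~\ref{theorem3} by inserting the functional relation \eqref{newrelationeqn}, but the two theorems concern genuinely different partition-generating sums on their left sides, so this would merely trade the base-mismatch difficulty for the problem of reconciling those two sums; the direct route above therefore seems preferable.
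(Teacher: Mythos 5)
Your opening reduction is exactly the paper's first step: the identity $(-zq;q)_{2n}(-zq^{2n+2};q^2)_\infty=(-zq;q^2)_n(-zq^2;q^2)_\infty$ gives \eqref{simplification}, and your restatement of the theorem as a ``reciprocity-type'' identity for $\sum_n (-zq;q^2)_n q^n/(-zq/\a;q)_n$ is correct. But from that point on your argument is a plan rather than a proof, and the plan has a genuine gap at its center. After expanding $1/(-zq^{2n+1};q^2)_\infty$, interchanging, and applying the $a\to 0$ Rogers--Fine identity to the inner sum, you are left with a double series carrying $q^{n^2-n}(-zq/\a)^n q^{(2k+1)n}$, and you assert that ``a further summation of that $k$-series, followed by an appeal to Euler's identity and the Jacobi triple product identity, should separate the result into precisely the two pieces on the right side.'' This is the entire content of the theorem, and it is not carried out. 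Note in particular that neither piece on the right-hand side is a theta function: one involves the mock theta function $\nu$ and the other is a partial-theta-type series $\sum_n(-\a/z;q)_{n+1}\a^nq^{n(n+1)/2}/(-\a^2q/z;q^2)_{n+1}$, so the Jacobi triple product cannot be the mechanism that produces them. The separation you need is precisely the kind of decomposition furnished by Andrews' partial theta function identity \eqref{gea90_thm1}, and attempting to rederive that splitting from Rogers--Fine and series manipulations amounts to reproving Andrews' theorem, which is a substantial undertaking you have not attempted. You even flag this yourself when you say that organizing the mixed-base resummation ``is the technical heart of the proof.''

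For comparison, the paper's route after \eqref{simplification} is short and entirely explicit: apply \eqref{gea90_thm1} with $q\mapsto q$ fixed, $A=B/(bq)$, $a=z/\a$, and $b\to 0$ to obtain \eqref{afterandrews} for $\sum_n (B^2;q^2)_n q^n/(-zq/\a;q)_n$; convert the resulting partial theta sum $\sum_m (B\a/z)^m/(-B\a/z;q)_{m+1}$ via \cite[Eq.~(41)]{ady1} (your equation \eqref{eq41}); specialize $B^2=-zq$; and finally recognize $\sum_m(q;q^2)_m(-q\a^2/z)^m$ as $\nu(\a^2/z,-\a^2/z;-q)$ using the identity from \cite[p.~29, Exercise 6]{gea}. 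This last identification handles exactly the ``clash of bases'' you worry about, and it is the step your sketch replaces with an unverified hope. To repair your argument you would need either to invoke \eqref{gea90_thm1} (or an equivalent result) explicitly, or to supply the full computation showing that your $k$-sum genuinely resolves into the $\nu$ term and the partial theta term with the stated product prefactors.
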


There is a trade-off between the two generalizations of the Andrews-Yee identities above.  Although Theorem \ref{theorem3} appears somewhat complicated, the variables $\a, z$, and $q$ occurring in $\omega_{1}(\a, z; q)$ are independent. On the other hand, Theorem \ref{theorem4} is comparatively simpler than Theorem \ref{theorem3}, but the first two components in $\nu\left(\a^2/z,-\a^2/z;-q\right)$ are dependent.

A proof of Theorem \ref{theorem3} is given in Section \ref{***} along with the derivation of \eqref{pwnz} as a special case of Theorem \ref{theorem3}. We also give an important application of this theorem in the same section. Also, apart from proving Theorem \ref{theorem4} in Section \ref{aysecond}, we show how \eqref{pnnz} can be derived from Theorem \ref{theorem4}. Numerous corollaries of Theorem \ref{theorem4}, which give many well-known results in the literature as well as new ones, are given in Section \ref{aysecond1}.

Throughout this paper, it is assumed that the variables $q, \alpha$, and $z$ are chosen so that all functions of one or more of these variables are analytic.

\section{Preliminary results}\label{prelim}
For $z\neq0$ and $|q|<1$, the Jacobi triple product identity is given by \cite[p.~21, Theorem 2.8]{gea}
\begin{equation}\label{jtpi}
\sum_{n=-\infty}^{\infty}z^nq^{n^2}=(-zq;q^2)_{\infty}(-q/z;q^2)_{\infty}(-q^2;q^2)_{\infty}.
\end{equation}
We will use a deep identity of Andrews \cite[p.~141, Theorem 1]{gea90} in our proofs of Theorems \ref{theorem3} and \ref{theorem4}. For $a,A\neq0$,
\begin{align}
 \sum_{n=0}^{\infty} \frac{(B;q)_n (-Abq;q)_n q^n}{(-aq;q)_n (-bq;q)_n}
&=-\frac{(B;q)_{\infty} (-Abq;q)_{\infty}}{a(-bq;q)_{\infty} (-aq;q)_{\infty}} \sum_{m=0}^{\infty}
\frac{(A^{-1};q)_m \left(Abq/a\right)^m}{\left(-B/a;q\right)_{m+1}}\nonumber\\
&\quad+(1+b) \sum_{m=0}^{\infty} \frac{(-a^{-1};q)_{m+1}( -ABq/a;q)_{m} (-b)^m}{\left(-B/a;q\right)_{m+1} \left(Abq/a;q\right)_{m+1}}. \label{gea90_thm1}
\end{align}
We also require the $q$-analogue of Gauss's second theorem, namely \cite[Equation (1.8)]{qana},
\begin{align}\label{gauss}
\sum_{n=0}^{\infty}\frac{(a;q)_{n}(b;q)_n}{(q;q)_n(qab ;q^2)_{n}}q^{n(n+1)/2} = \frac{(-q;q)_{\infty}(aq;q^2)_{\infty}(bq;q^2)_{\infty}}{(qab;q^2)_{\infty}}.
\end{align}

\section{A functional relation between $\omega_1(\a, z;q)$ and $\nu(\a, z;q)$}\label{proof}
The functional relation in Theorem \ref{newrelation} is proved in this section.

\begin{proof}[Theorem \textup{\ref{newrelation}}][]
 We begin by quoting a corrected version of a formula from Fine's book \cite[Equation~(8.2)]{fine}.  (In (8.1), for $H$, replace $(ut)_{\i}$ by $(-ut)_{\i}$, and replace  $(u)_{\i}$ by $(-u)_{\i}$; for $G$, replace $(ut)_{n}$ by $(-ut)_{n}$. In (8.2), replace  $(u)_{\i}$ by $(-u)_{\i}$.)  To that end,
\begin{equation}\label{nfine}
\sum_{n=0}^{\infty}\df{(b/u)^nq^{(n^2+n)/2}}{(bq)_n} -\df{u}{1+u}\sum_{n=0}^{\infty}\df{q^n}{(bq)_n(-uq)_n}
=\df{1}{(bq)_{\infty}(-u)_{\infty}}\sum_{n=0}^{\infty}(b/u)^nq^{(n^2+n)/2}.
\end{equation}
Replacing $q$ by $q^2$ in \eqref{nfine}, we deduce that
\begin{gather*}
\frac{1}{(1-b)}\sum_{n=0}^{\infty}\frac{(b/u)^nq^{n^2+n}}{(bq^2;q^2)_n}\\
=\dfrac{1}{(b;q^2)_{\infty}(-u;q^2)_\infty}\sum_{n=0}^{\infty}(b/u)^nq^{(n^2+n)}
+\frac{u}{(1-b)(1+u)}\sum_{n=0}^{\infty}\frac{q^{2n}}{(bq^2;q^2)_n(-uq^2;q^2)_n}.
\end{gather*}
Taking $u=-zq/\a$ and $ b=-zq$, we furthermore find that
{\allowdisplaybreaks\begin{align*}
\sum_{n=0}^{\infty}\frac{\a^nq^{n^2+n}}{(-zq;q^2)_{n+1}}
=&\frac{1}{(-zq;q^2)_{\infty}(zq/\a;q^2)_\infty}\sum_{n=0}^{\infty}\a^nq^{n^2+n}\\
&-\frac{zq}{\a}\frac{1}{(1+zq)(1-zq/\a)}\sum_{n=0}^{\infty}\frac{q^{2n}}{(-zq^3;q^2)_n(zq^3/\a;q^2)_n}\\
=&\frac{1}{(-zq;q^2)_{\infty}(zq/\a;q^2)_\infty}\sum_{n=0}^{\infty}\a^nq^{n^2+n}\\
&-\frac{zq}{\a}\sum_{n=0}^{\infty}\frac{q^{2n}}{(-zq;q^2)_{n+1}(zq/\a;q^2)_{n+1}}.
\end{align*}}
Using \eqref{1} and \eqref{2b}, we complete the proof.
\end{proof}

Fine's identity  \eqref{nfine} can be found in Ramanujan's Lost Notebook \cite[p.~40]{lnb}, \cite[Entry 6.3.1, p.~115]{AB2}.

\section{Generalization of the first Andrews-Yee identity and its application}\label{***}
Here, we first prove Theorem \ref{theorem3}. Then we derive \eqref{pwnz} from it. Finally, we give a striking application of Theorem \ref{theorem3}.

\begin{proof}[Theorem \textup{\ref{theorem3}}][]

We employ \eqref{gea90_thm1}.  First, replace $q$ by $q^2$.  Second, let  $ B=z^2q^2/\a, a=zq/\a$, and $b=-zq$. Third, let $A=0$ while using the limit
$$ \lim_{A\to0}(1/A;q^2)_{m}A^m=\lim_{A\to0}(1-1/A)(1-q^2/A)\cdots(1-q^{2m-2}/A)A^m=(-1)^mq^{m(m-1)}.$$
We then deduce that
\begin{align*}
\sum_{m=0}^{\infty}\frac{(z^2q^2/\a;q^2)_m}{(-zq^3/\a;q^2)_m(zq^3;q^2)_m}q^{2m}=& -\frac{\a}{zq}\frac{(z^2q^2/\a;q^2)_\infty}{(zq^3;q^2)_\infty(-zq^3/\a;q^2)_\infty}\nu(\a,z;q)\\
&+(1-zq)\sum_{n=0}^{\infty}\frac{(-\a/(zq);q^2)_{m+1}}{(-zq;q^2)_{m+1}}(zq)^m.
\end{align*}
Now substitute $\nu(\a,z,q)$ from \eqref{newrelationeqn} into the last equality to deduce that
\begin{align}
&\sum_{m=0}^{\infty}\frac{(z^2q^2/\a;q^2)_m}{(-zq^3/\a;q^2)_m(zq^3;q^2)_m}q^{2m}\notag\\
=& -\frac{\a}{zq}\frac{(z^2q^2/\a;q^2)_\infty}{(zq^3;q^2)_\infty(-zq^3/\a;q^2)_\infty}
\left(\frac{1}{(-zq;q^2)_{\infty}(zq/\a;q^{2})_{\infty}}\sum_{n=0}^{\infty}\a^nq^{n^2+n} -\frac{zq}{\a}\omega_{1}(\a, z; q)\right)\notag\\
&\qquad \qquad+(1-zq)\sum_{n=0}^{\infty}\frac{(-\a/(zq);q^2)_{m+1}}{(-zq;q^2)_{m+1}}(zq)^m\notag\\
=&\frac{(z^2q^2/\a;q^2)_\infty}{(zq^3;q^2)_\infty(-zq^3/\a;q^2)_\infty}\omega_{1}(\a,z;q)\notag\\
&-\frac{\a}{zq}\frac{(z^2q^2/\a;q^2)_\infty
}{(zq^3;q^2)_\infty(-zq^3/\a;q^2)_\infty(-zq;q^2)_{\infty}(zq/\a;q^{2})_{\infty}}\sum_{n=0}^{\infty}\a^nq^{n^2+n}\notag\\
&\qquad \qquad+(1-zq)\sum_{n=0}^{\infty}\frac{(-\a/(zq);q^2)_{m+1}}{(-zq;q^2)_{m+1}}(zq)^m.\label{firsttag}
\end{align}
If we divide both sides above by $(1+zq/\a)(1-zq)(-zq^2/ \a;q^2)_\infty(zq^2;q^2)_\infty$,  the left-hand side becomes
{\allowdisplaybreaks\begin{align}
&\frac{1}{(1+zq/\a)(1-zq)(-zq^2/ \a;q^2)_\infty(zq^2;q^2)_\infty}
\sum_{m=0}^{\infty}\frac{(z^2q^2/\a;q^2)_m}{(-zq^3/\a;q^2)_m(zq^3;q^2)_m}q^{2m}\notag\\
&=\frac{1}{(-zq^2/ \a;q^2)_\infty(zq^2;q^2)_\infty}\sum_{m=0}^{\infty}
\frac{(z^2q^2/\a;q^2)_m(-zq^2/\a;q^2)_{m+1}(zq^2;q^2)_{m+1}}{(-zq/\a;q)_{2m+2}(zq;q)_{2m+2}}q^{2m}\notag\\
&=\sum_{m=0}^{\infty}\frac{(z^2q^2/\a;q^2)_mq^{2m}}{(-zq/\a;q)_{2m+2}(zq;q)_{2m+2}(-zq^{2m+4}/\a;q^2)_\infty(zq^{2m+4};q^2)_\infty}\notag\\
&=\frac{1}{q^2}\sum_{m=1}^{\infty}\frac{(z^2q^2/\a;q^2)_{m-1}q^{2m}}{(-zq/\a;q)_{2m}(zq;q)_{2m}(-zq^{2m+2}/\a;q^2)_\infty(zq^{2m+2};q^2)_\infty},
\label{secondtag}
\end{align}}
while the right-hand side becomes
\begin{align}
&\frac{1}{(1+zq/\a)(1-zq)(-zq^2/ \a;q^2)_\infty(zq^2;q^2)_\infty}\Bigg(\frac{(z^2q^2/\a;q^2)_\infty}{(zq^3;q^2)_\infty(-zq^3/\a;q^2)_\infty}\omega_{1}(\a,z;q)\notag\\
&-\frac{\a}{zq}\frac{(z^2q^2/\a;q^2)_\infty
}{(zq^3;q^2)_\infty(-zq^3/\a;q^2)_\infty(-zq;q^2)_{\infty}(zq/\a;q^{2})_{\infty}}\sum_{n=0}^{\infty}\a^nq^{n^2+n}\notag\\
&+(1-zq)
\sum_{m=0}^{\infty}\frac{(-\a/(zq);q^2)_{m+1}}{(-zq;q^2)_{m+1}}(zq)^m \Bigg)\notag \\
=&\frac{(z^2q^2/\a;q^2)_\infty}{(zq;q)_\infty(-zq/\a;q)_\infty}\omega_{1}(\a,z;q)\notag\\&-\frac{\a}{zq}\frac{(z^2q^2/\a;q^2)_\infty
}{(-zq/ \a;q)_\infty(zq;q)_\infty(-zq;q^2)_{\infty}(zq/\a;q^{2})_{\infty}}\sum_{n=0}^{\infty}\a^nq^{n^2+n}\notag\\
&+\frac{1}{(1+zq/\a)(-zq^2/ \a;q^2)_\infty(zq^2;q^2)_\infty}\sum_{m=0}^{\infty}\frac{(-\a/(zq);q^2)_{m+1}}{(-zq;q^2)_{m+1}}(zq)^m .\label{thirdtag}
\end{align}
Remembering that we had divided both sides of \eqref{firsttag} by $(1+zq/\a)(1-zq)(-zq^2/ \a;q^2)_\infty(zq^2;q^2)_\infty$, we substitute \eqref{secondtag} and \eqref{thirdtag} into \eqref{firsttag} as amended.  We then multiply both sides by the resulting identity by $q^2$ to conclude that
\begin{align*}
\sum_{m=1}^{\infty}&\frac{(z^2q^2/\a;q^2)_{m-1}q^{2m}}{(-zq/\a;q)_{2m}(zq;q)_{2m}(-zq^{2m+2}/\a;q^2)_\infty(zq^{2m+2};q^2)_\infty}\nonumber\\
=&\frac{(z^2q^2/\a;q^2)_\infty}{(zq;q)_\infty(-zq/\a;q)_\infty}q^2\omega_{1}(\a,z;q)+F(\alpha,z;q),
\end{align*}
where
{\allowdisplaybreaks\begin{align*}
F(\a,z;q)=&\frac{q^2}{(1+zq/\a)(-zq^2/ \a;q^2)_\infty(zq^2;q^2)_\infty}\sum_{m=0}^{\infty}\frac{(-\a/(zq);q^2)_{m+1}}{(-zq;q^2)_{m+1}}(zq)^m\\
&-\frac{\a q}{z}\frac{(z^2q^2/\a;q^2)_\infty}{(-zq/ \a;q)_\infty(zq;q)_\infty(-zq;q^2)_{\infty}(zq/\a;q^{2})_{\infty}}
\sum_{n=0}^{\infty}\a^nq^{n^2+n}\\
=&\frac{1}{(-zq^2/ \a;q^2)_\infty(zq^2;q^2)_\infty}\Bigg(\frac{q^2}{(1+zq/\a)}\sum_{m =0}^{\infty}\frac{(-\a/(zq);q^2)_{m+1}}{(-zq;q^2)_{m+1}}(zq)^m\\
&-\frac{\a q}{z}\frac{(z^2q^2/\a;q^2)_\infty}{(z^2q^2;q^4)_{\infty}(z^2q^2/\a^2;q^{4})_{\infty}}\sum_{n=0}^{\infty}\a^nq^{n^2+n}\Bigg).
\end{align*}}
This completes our proof.
\end{proof}
\subsection{Andrews and Yee's \eqref{pwnz} as a special case of Theorem \ref{theorem3}}\label{ay1spl} We begin with a lemma.

\begin{lemma}\label{1psi1spl}
For $|zq|<1$ and $|q|<1$,
\begin{equation}\label{1psi1spleqn}
\sum_{m=0}^{\infty}\frac{(-q/z;q^2)_m}{(-zq^3;q^2)_m}(zq)^m
=\frac{(1+zq)(z^2q^4;q^4)_{\infty}(q^2;q^2)_{\infty}(-q^2;q^2)_{\infty}^{2}}{(z^2q^2;q^4)_{\infty}}.
\end{equation}
\end{lemma}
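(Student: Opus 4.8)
\textbf{Proof proposal for Lemma \ref{1psi1spl}.}

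The plan is to recognize the left-hand sum as a very-well-poised or at least balanced special case of a classical $q$-series summation, most naturally a limiting case of the $q$-Gauss sum or of Ramanujan's ${}_1\psi_1$ summation. Writing $t=zq$, the series is $\sum_{m\ge0}\frac{(-q/z;q^2)_m}{(-zq^3;q^2)_m}(zq)^m$, which with base $q^2$ has the shape $\sum_m \frac{(a;q^2)_m}{(c;q^2)_m}t^m$ with $a=-q/z$, $c=-zq^3 = zq\cdot zq^2/( -1)\cdots$; more precisely $c = a\,q^2\cdot(-z^2)$, so the ratio of parameters is tied to $z^2$. Such a ${}_1\phi_0$-type sum with a free argument does not sum in closed form in general, so the first thing I would check is whether the intended route is instead to apply the $q$-analogue of Gauss's second theorem \eqref{gauss} after a reindexing, since that identity has exactly the $q^{n(n+1)/2}$-free shape only after one clears it — here there is no Gaussian factor, so more likely the relevant tool is the nonterminating $q$-Gauss sum
\[
\sum_{m=0}^{\infty}\frac{(a;q)_m(b;q)_m}{(q;q)_m(c;q)_m}\Bigl(\frac{c}{ab}\Bigr)^m=\frac{(c/a;q)_\infty(c/b;q)_\infty}{(c;q)_\infty(c/ab;q)_\infty},
\]
specialized by sending one of $a,b$ to a value that kills the $(q;q)_m$ in the denominator (for instance $b\to\infty$ with a compensating power), or the ${}_1\psi_1$ summation
\[
\sum_{n=-\infty}^{\infty}\frac{(a;q)_n}{(b;q)_n}x^n=\frac{(ax;q)_\infty(q/ax;q)_\infty(q;q)_\infty(b/a;q)_\infty}{(x;q)_\infty(b/ax;q)_\infty(b;q)_\infty(q/a;q)_\infty}.
\]
Given that the right-hand side of \eqref{1psi1spleqn} contains the telltale factor $(q^2;q^2)_\infty(-q^2;q^2)_\infty^2$, which is precisely $\sum_{n=-\infty}^\infty q^{n^2}$ up to the Jacobi triple product \eqref{jtpi}, I strongly expect the proof goes through a ${}_1\psi_1$ evaluation whose right-hand side is then simplified using \eqref{jtpi}.

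Concretely, the steps I would carry out are: (1) set $t=zq$ and rewrite the summand so that base $q^2$, numerator parameter $a=-q/z$, denominator parameter $b=-zq^3$, and argument $x$ are identified; observe $b/a = z^2q^2$ and the natural nonterminating ${}_1\phi_0$ has the wrong convergence unless we first complete it to a bilateral sum. (2) Complete the unilateral sum to a bilateral one: since $(-q/z;q^2)_m/(-zq^3;q^2)_m$ with negative $m$ involves $(-zq^3;q^2)_{-m}^{-1}$, the negative-index terms should be arranged to vanish or to be summed separately; more cleanly, use the identity $\sum_{m\ge 0}\frac{(a;q)_m}{(b;q)_m}x^m = {}_2\phi_1(a,q;b;q,x)$ and invoke the $q$-Gauss/$q$-Kummer evaluation at the special argument forced here. (3) Apply the chosen summation formula to get a product of infinite $q$-Pochhammer symbols in base $q^2$. (4) Collect these into the combination $(z^2q^4;q^4)_\infty / (z^2q^2;q^4)_\infty$ times a theta-like factor, and finally invoke the Jacobi triple product identity \eqref{jtpi} with $q\mapsto q^2$, $z\mapsto 1$ to convert $\sum_{n}q^{2n^2}$-type expressions into $(q^2;q^2)_\infty(-q^2;q^2)_\infty^2$, matching the stated right-hand side. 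Throughout I would keep track of the single prefactor $(1+zq)$, which is exactly the $m=0$ discrepancy between the sum starting at $m=0$ versus the shifted product form.

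The main obstacle I anticipate is step (2)–(3): correctly identifying which classical summation applies. The summand $\frac{(-q/z;q^2)_m}{(-zq^3;q^2)_m}(zq)^m$ is a ${}_2\phi_1$ with a unit-ish numerator parameter, and whether it is literally the $q$-Gauss sum, Bailey's ${}_1\psi_1$, or the Rogers–Fine / Fine identity \eqref{nfine} already quoted in the paper requires matching three parameters and the argument simultaneously; it is easy to be off by a substitution of the form $q\mapsto q^2$ or $z\mapsto z^{-1}$. Once the right template is pinned down, the remaining work — expanding the resulting infinite products, splitting $(\cdot;q^2)_\infty$ into $(\cdot;q^4)_\infty(\cdot q^2;q^4)_\infty$, and applying \eqref{jtpi} — is routine bookkeeping. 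A secondary subtlety is verifying the convergence hypothesis $|zq|<1$ is exactly what is needed for the bilateral/nonterminating series manipulation to be valid, and checking the $m=0$ normalization so that the overall constant $(1+zq)$ on the right is correct rather than $1$ or $(1+zq)^{-1}$.
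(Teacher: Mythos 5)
Your instinct is pointing at the right tool: the paper's second proof of Lemma \ref{1psi1spl} is exactly the ${}_1\psi_1$ route you suspect, taking $q\mapsto q^2$, $a=-q/z$, $b=-zq^3$, $t=zq$ in \eqref{1psi1} and then simplifying the resulting product (the paper also gives a first proof via Heine's transformation followed by the $q$-binomial theorem, which avoids bilateral series altogether). However, as written your submission is a plan rather than a proof: you explicitly defer the step you yourself identify as the main obstacle, namely which summation applies and how the unilateral sum relates to the bilateral one. That deferred step is where all the content lies, so there is a genuine gap.

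Concretely, the missing idea is that for these parameters the two sums on the left-hand side of \eqref{1psi1} are \emph{equal}, so the bilateral evaluation is exactly twice the unilateral sum you want. Indeed, with $a=-q/z$, $b=-zq^3$, $t=zq$ one has $q^2/b=-1/(zq)$, $q^2/a=-zq$, and $b/(at)=zq$, and the $m$th term of the second sum is $\frac{(-1/(zq);q^2)_m}{(-zq;q^2)_m}(zq)^m=\frac{(1+1/(zq))\,zq}{1+zq}\cdot\frac{(-q/z;q^2)_{m-1}}{(-zq^3;q^2)_{m-1}}(zq)^{m-1}$, whose prefactor is $1$; reindexing $m\mapsto m+1$ turns the sum over $m\ge1$ into the original sum over $m\ge0$. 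Without this observation, ``completing to a bilateral sum'' does not obviously help, and your alternative suggestion of arranging the negative-index terms to vanish is not what happens. The nonterminating $q$-Gauss sum also does not apply directly here, since the argument $zq$ is not of the required form $c/(ab)$ for these parameters. Once the doubling is in hand, the rest is the bookkeeping you describe: $b/a=z^2q^2$, $at=-q^2$, $(q^2/(at);q^2)_\infty=(-1;q^2)_\infty=2(-q^2;q^2)_\infty$, and $(zq;q^2)_\infty(-zq;q^2)_\infty=(z^2q^2;q^4)_\infty$ assemble the right-hand side of \eqref{1psi1spleqn}, with the factor $(1+zq)$ emerging from $(-zq;q^2)_\infty=(1+zq)(-zq^3;q^2)_\infty$ rather than from an $m=0$ normalization as you surmise; no separate appeal to the Jacobi triple product is needed.
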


\begin{proof}
 Let $z=y/q$ in \eqref{1psi1spleqn}. Thus,
\begin{equation}\label{2a}
\sum_{m=0}^{\i}\df{(-q^2/y;q^2)_m}{(-yq^2;q^2)_m}y^m=\df{(1+y)(y^2q^2;q^4)_{\i}(q^2;q^2)_{\i}(-q^2;q^2)_{\i}^2}{(y^2;q^4)_{\i}}.
\end{equation}
Replace $q^2$ by $q$ in \eqref{2a} to find that
\begin{equation}\label{3a}
\sum_{m=0}^{\i}\df{(-q/y;q)_m}{(-yq;q)_m}y^m=\df{(1+y)(y^2q;q^2)_{\i}(q;q)_{\i}(-q;q)_{\i}^2}{(y^2;q^2)_{\i}}.
\end{equation}
Our goal is thus to prove \eqref{3a}.

Recall next the definition of the basic hypergeometric function ${_2\phi_1}$,
\begin{equation*}
{_2\phi_1}(a,b;c;q,z):=\sum_{n=0}^{\infty}\df{(a;q)_n(b;q)_n}{(c;q)_n(q;q)_n}z^n,\qquad |z|<1,
\end{equation*}
and
 Heine's transformation \cite[p.~13]{gasper}
\begin{equation*}
_{2}\phi_{1}(a,b;c;q,z)=\df{(b;q)_{\i}(az;q)_{\i}}{(c;q)_{\i}(z;q)_{\i}} {_{2}\phi_{1}}(c/b,z;az;q,b).
\end{equation*}
Set $b=q, c=-yq,a=-q/y$, and $z=y$ to find that
\begin{align}\label{4a}
\sum_{m=0}^{\i}\df{(-q/y;q)_m}{(-yq;q)_m}y^m
&=\df{(q;q)_{\i}(-q;q)_{\i}}{(-yq;q)_{\i}(y;q)_{\i}}\sum_{m=0}^{\i}\df{(-y;q)_m(y;q)_m}{(-q;q)_m(q;q)_m}q^m\notag\\
&=\df{(1+y)(q^2;q^2)_{\i}}{(y^2;q^2)_{\i}}\sum_{m=0}^{\i}\df{(y^2;q^2)_m}{(q^2;q^2)_m}q^m.
\end{align}
When we compare \eqref{4a} with \eqref{3a}, we see that we must show that
\begin{equation}\label{5a}
 \sum_{m=0}^{\i}\df{(y^2;q^2)_m}{(q^2;q^2)_m}q^m=(y^2q;q^2)_{\i}(-q;q)_{\i}=\df{(y^2q;q^2)_{\i}}{(q;q^2)_{\i}}.
 \end{equation}
 To prove \eqref{5a}, we apply the $q$-binomial theorem \cite[p.~8]{gasper}
 \begin{equation}\label{6a}
 \sum_{m=0}^{\i}\df{(a;q)_m}{(q;q)_m}z^m=\df{(az;q)_{\i}}{(z;q)_{\i}}.
 \end{equation}
 Setting $a=y^2$ and $z=q$ and replacing $q$ by $q^2$ in \eqref{6a}, we arrive at
 \begin{equation}\label{7a}
 \sum_{m=0}^{\i}\df{(y^2;q^2)_m}{(q^2;q^2)_m}q^m=\df{(y^2q;q^2)_{\i}}{(q;q^2)_{\i}}.
 \end{equation}
 Noting that \eqref{5a} and \eqref{7a} are identical, we complete the proof.
\end{proof}

We next give a second proof of Lemma \ref{1psi1spl}.

\begin{proof}
Replacing $q$ by $q^2$ in Ramanujan's ${}_1\psi_{1}$ summation \cite[p.~139, Equation (5.2.2)]{gasper}, we have, for $|b/a|<|t|<1$ and $|q|<1$,
\begin{equation}\label{1psi1}
\sum_{m=0}^{\infty}\frac{(a;q^2)_m}{(b;q^2)_m}t^m
+\sum_{m=1}^{\infty}\frac{(q^2/b;q^2)_m}{(q^2/a;q^2)_m}\left(\frac{b}{at}\right)^m
=\frac{(q^2;q^2)_{\infty}(b/a;q^2)_{\infty}(at;q^2)_{\infty}(q^2/(at);q^2)_{\infty}}
{(b;q^2)_{\infty}(q^2/a;q^2)_{\infty}(t;q^2)_{\infty}(b/(at);q^2)_{\infty}}.
\end{equation}
Let $a=-q/z, b=-zq^3$ and $t=zq$ above and thereby note that both series on the left side are equal. This gives \eqref{1psi1spleqn} upon simplification.
\end{proof}

\begin{proof}[\textup{\eqref{pwnz}}][]
Let $\a=1$ in Theorem \ref{theorem3}. By Lemma \ref{1psi1spl}, $F(1,z,q)=0$.  Thus, putting $\alpha=1$ in \eqref{result1} and using \eqref{omega1omega}, we see that \eqref{result1} reduces to \eqref{pwnz}.
\end{proof}

\subsection{An application of Theorem \ref{theorem3}}\label{ay1app}
In this subsection we show that the following corollary is equivalent to a special case of a beautiful reciprocity theorem of S.-Y.~Kang \cite{kang}.
\begin{corollary}\label{frelcor}
For $F(\alpha,z;q)$ defined by \eqref{F},
\begin{equation}\label{frel}
F(\a, z;q)=F\left(\frac{1}{\a},-\frac{z}{\a};q\right).
\end{equation}
\end{corollary}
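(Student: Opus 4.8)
The plan is to recognize that $F(\alpha,z;q)$ is, up to elementary prefactors, a difference of two pieces: a ${}_2\phi_1$-type partial-theta sum
$$
S(\alpha,z;q):=\frac{q^2}{(1+zq/\alpha)}\sum_{m\ge 0}\frac{(-\alpha/(zq);q^2)_{m+1}}{(-zq;q^2)_{m+1}}(zq)^m,
$$
and a theta quotient
$$
T(\alpha,z;q):=\frac{\alpha q}{z}\,\frac{(z^2q^2/\alpha;q^2)_\infty}{(z^2q^2;q^4)_\infty(z^2q^2/\alpha^2;q^4)_\infty}\sum_{n\ge 0}\alpha^n q^{n^2+n},
$$
all divided by $(-zq^2/\alpha;q^2)_\infty(zq^2;q^2)_\infty$. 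So it suffices to check that under the substitution $(\alpha,z)\mapsto(1/\alpha,-z/\alpha)$, both the outer factor $\left[(-zq^2/\alpha;q^2)_\infty(zq^2;q^2)_\infty\right]^{-1}$ and the bracketed expression $S-T$ are invariant. The outer factor is visibly symmetric: $-z/\alpha$ divided by $1/\alpha$ is $-z$, so $(-(-z/\alpha)q^2/(1/\alpha);q^2)_\infty=( zq^2;q^2)_\infty$ and $((-z/\alpha)q^2;q^2)_\infty=(-zq^2/\alpha;q^2)_\infty$; the two Pochhammer symbols simply swap. Thus the entire content of the corollary is the claim $S(\alpha,z;q)-T(\alpha,z;q)=S(1/\alpha,-z/\alpha;q)-T(1/\alpha,-z/\alpha;q)$.

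The key step is to transform the partial-theta sum $S$ into a genuinely symmetric object. I would first rewrite $\sum_{m\ge 0}(-\alpha/(zq);q^2)_{m+1}/(-zq;q^2)_{m+1}\,(zq)^m$ by peeling the $m=0$ term or by shifting the index, then apply a limiting case of Heine's transformation or Ramanujan's ${}_1\psi_1$ summation (both are available in the excerpt: Heine appears in the proof of Lemma~\ref{1psi1spl}, and ${}_1\psi_1$ in the second proof) to convert this one-sided ${}_2\phi_1(q^2)$ into a bilateral sum, or alternatively into a quotient of theta functions plus a ``complementary'' partial-theta piece. The natural target is Kang's three-variable reciprocity theorem, which the authors announce is equivalent to the corollary; that theorem has exactly the shape ``(partial theta in $a$) $-$ (partial theta in $b$) $=$ (theta quotient)'' and is manifestly invariant under the relevant swap of variables. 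Concretely, I expect that after clearing the prefactor $1/(1+zq/\alpha)$ and writing $q^{n^2+n}$ as a Jacobi triple product via \eqref{jtpi}, the combination $S-T$ matches
$$
\rho(a,b;q):=\sum_{n\ge 0}\frac{(-1)^n q^{n(n+1)/2} b^n}{(aq;q)_n}\quad\text{(or its }q^2\text{-analogue)},
$$
evaluated at parameters $a,b$ built from $\alpha,z$ in such a way that $(\alpha,z)\mapsto(1/\alpha,-z/\alpha)$ merely exchanges $a\leftrightarrow b$; Kang's reciprocity $\rho(a,b)-\rho(b,a)=(\text{theta quotient})$ then delivers \eqref{frel} after one checks the theta quotient is itself symmetric under $a\leftrightarrow b$ (equivalently under $\alpha\mapsto1/\alpha$, $z\mapsto-z/\alpha$).

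The main obstacle will be the bookkeeping in identifying the precise dictionary between $(\alpha,z,q)$ and Kang's parameters $(a,b,q)$ — in particular tracking the base change $q\mapsto q^2$, the sign twists coming from $z\mapsto-z/\alpha$, and the scalar prefactors $\alpha q/z$ and $1/(1+zq/\alpha)$, so that the two partial-theta sums in $S(\alpha,z;q)$ and in $S(1/\alpha,-z/\alpha;q)$ land on the two sides $\rho(a,b)$ and $\rho(b,a)$ of the reciprocity relation with no leftover factors. Once that matching is pinned down, verifying the symmetry of the theta-quotient term $T$ under the swap is a short direct computation with \eqref{jtpi}: one checks $\sum_n \alpha^n q^{n^2+n}=(-\alpha q^2;q^4)_\infty(-q^2/\alpha;q^4)_\infty(q^4;q^4)_\infty$ and that the resulting quotient, together with the factor $\alpha q/z$ and the substituted version with $\alpha^{-1}q(-\alpha/z)=-q/z$, agree. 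I would present the argument by first establishing the Heine/${}_1\psi_1$ reduction of $S$ as a displayed lemma, then invoking Kang's theorem, and finally doing the two symmetry checks; alternatively, a fully self-contained route is to prove \eqref{frel} directly from \eqref{result1} by noting that the left-hand side of \eqref{result1} and the $\omega_1$-term transform in a controlled way under $(\alpha,z)\mapsto(1/\alpha,-z/\alpha)$, but I expect the Kang-reciprocity route to be cleaner and is the one the authors evidently intend.
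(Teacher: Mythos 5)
Your main route is genuinely different from the paper's, and the paper's actual proof is the argument you relegate to a closing aside. The authors simply write Theorem \ref{theorem3} as $G(\a,z;q)=H(\a,z;q)+F(\a,z;q)$, where $G$ is the left-hand side of \eqref{result1} and $H$ is the $\omega_1$-term, observe that $G$ and $H$ are each \emph{individually} invariant under $(\a,z)\mapsto(1/\a,-z/\a)$ (this is a direct inspection: e.g.\ in $\omega_1$ the two Pochhammer factors swap, and likewise for the products in $G$ and $H$), and conclude the invariance of $F$ by subtraction. That is the whole proof --- about four lines. The Kang-reciprocity connection appears only in the \emph{next} corollary, as an application showing \eqref{frel} is equivalent to the case $a=zq$, $b=-zq/\a$, $c=z^2q^2/\a$ of \eqref{3varrt}; the paper does note the steps are reversible, so your route can in principle yield a second proof, but it is the harder direction, not the intended one.

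Within your main route there is a concrete error: you claim the theta-quotient term $T$ is itself symmetric under the swap, citing $\sum_{n\ge 0}\a^n q^{n^2+n}=(-\a q^2;q^4)_\i(-q^2/\a;q^4)_\i(q^4;q^4)_\i$. That identity is false for the one-sided sum; the Jacobi triple product applies only to $\sum_{n=-\i}^{\i}$, and $\sum_{n\ge0}\a^nq^{n^2+n}$ is a genuine partial theta for $\a\neq1$. In fact $T$ is \emph{not} invariant: under the swap its partial theta becomes $\sum_{n\ge0}\a^{-n}q^{n^2+n}$, and it is precisely the difference $T(\a,z;q)-T(1/\a,-z/\a;q)$ that assembles (via the reindexing $n\mapsto -n-1$, equation \eqref{cov}) into the bilateral theta producing the infinite-product side of Kang's identity. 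So if you insist on the reciprocity route you must carry both $T$-terms to one side rather than cancel them separately; also, the transformation needed to convert the ${}_2\phi_1$-type sums into Kang's $\rho_3$ is Jackson's $q$-analogue of Pfaff's transformation \eqref{kum}, not Heine or the ${}_1\psi_1$ directly. The far shorter and fully rigorous path is the $G,H$ symmetry argument you mention last.
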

\begin{proof}
Define
\begin{align}\label{defg}
G(\a, z;q):=\sum_{m=1}^{\infty}&\frac{(z^2q^2/\a;q^2)_{m-1}q^{2m}}{(-zq/\a;q)_{2m}(zq;q)_{2m}(-zq^{2m+2}/\a;q^2)_\infty(zq^{2m+2};q^2)_\infty},\end{align}
and
\begin{align*}
H(\a, z;q)&:=\frac{(z^2q^2/\a;q^2)_\infty}{(zq;q)_\infty(-zq/\a;q)_\infty}q^2\omega_{1}(\a,z;q)\nonumber\\
&=\frac{q^2(z^2q^2/\a;q^2)_\infty}{(zq;q)_\infty(-zq/\a;q)_\infty}\sum_{n=0}^{\infty}\frac{q^{2n}}{(-zq;q^2)_{n+1}(zq/\a;q^{2})_{n+1}},
\end{align*}
where $\omega_{1}(\a, z; q)$ is defined in \eqref{2b}. It is straightforward to see that
\begin{align}\label{relgh}
G(\a, z;q)&=G\left(\frac{1}{\a},-\frac{z}{\a};q\right),\nonumber\\
H(\a, z;q)&=H\left(\frac{1}{\a},-\frac{z}{\a};q\right).
\end{align}
Thus, \eqref{relgh} along with Theorem \ref{theorem3} implies \eqref{frel}.
\end{proof}

One of the beautiful theorems in Ramanujan's lost notebook is his reciprocity theorem \cite[p.~40]{lnb}, \cite{AB2}.
\begin{theorem}\label{ramanujanrec}
 For $ab\neq0$,
\begin{gather}
\left(1+\frac{1}{b}\right)\sum_{n=0}^{\infty}\frac{(-1)^nq^{n(n+1)/2}a^nb^{-n}}{(-aq)_{n}}
-\left(1+\frac{1}{a}\right)\sum_{n=0}^{\infty}\frac{(-1)^nq^{n(n+1)/2}a^{-n}b^{n}}{(-bq)_{n}}\notag\\
=\left(\frac{1}{b}-\frac{1}{a}\right)\frac{(aq/b, bq/a, q;q)_{\infty}}{( -aq, -bq;q)_{\infty}}.\label{reciprocitytheorem}
\end{gather}
\end{theorem}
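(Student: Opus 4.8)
The plan is to deduce Theorem~\ref{ramanujanrec} from Fine's identity \eqref{nfine}, which --- like the reciprocity theorem itself --- sits on p.~40 of the Lost Notebook, and which we use here directly, without the substitution $q\mapsto q^{2}$ that was needed for Theorem~\ref{newrelation}. Writing the two parameters in \eqref{nfine} as $\beta$ and $u$, we have
\begin{equation*}
\sum_{n=0}^{\infty}\frac{(\beta/u)^{n}q^{n(n+1)/2}}{(\beta q;q)_{n}}-\frac{u}{1+u}\sum_{n=0}^{\infty}\frac{q^{n}}{(\beta q;q)_{n}(-uq;q)_{n}}=\frac{1}{(\beta q;q)_{\infty}(-u;q)_{\infty}}\sum_{n=0}^{\infty}(\beta/u)^{n}q^{n(n+1)/2}.
\end{equation*}
Choosing $(\beta,u)=(-a,b)$ turns the first sum into precisely the partial theta function $\sum_{n\ge0}(-1)^{n}q^{n(n+1)/2}a^{n}b^{-n}/(-aq;q)_{n}$ appearing in \eqref{reciprocitytheorem}. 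Multiplying through by $1+1/b$ and using $(-b;q)_{\infty}=(1+b)(-bq;q)_{\infty}$ --- so that the coefficient $(1+1/b)\cdot\tfrac{b}{1+b}=1$ collapses the middle sum --- I obtain
\begin{equation*}
\left(1+\frac1b\right)\sum_{n=0}^{\infty}\frac{(-1)^{n}q^{n(n+1)/2}a^{n}b^{-n}}{(-aq;q)_{n}}=\sum_{n=0}^{\infty}\frac{q^{n}}{(-aq;q)_{n}(-bq;q)_{n}}+\frac{1}{b\,(-aq;q)_{\infty}(-bq;q)_{\infty}}\sum_{n=0}^{\infty}(-1)^{n}q^{n(n+1)/2}a^{n}b^{-n}.
\end{equation*}

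Now comes the one idea that makes this work: the \emph{non-theta} sum $\sum_{n\ge0}q^{n}/\bigl((-aq;q)_{n}(-bq;q)_{n}\bigr)$ is symmetric in $a$ and $b$. Hence, writing the displayed identity again with $a$ and $b$ interchanged (the specialization $(\beta,u)=(-b,a)$ of \eqref{nfine}) and subtracting, that sum cancels, and the left-hand side of \eqref{reciprocitytheorem} is left equal to
\begin{equation*}
\frac{1}{(-aq;q)_{\infty}(-bq;q)_{\infty}}\left(\frac1b\sum_{n=0}^{\infty}(-1)^{n}q^{n(n+1)/2}(a/b)^{n}-\frac1a\sum_{n=0}^{\infty}(-1)^{n}q^{n(n+1)/2}(b/a)^{n}\right).
\end{equation*}
It therefore suffices to show that the parenthesized difference of partial theta functions equals $(1/b-1/a)(aq/b,bq/a,q;q)_{\infty}$.

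For that last step I would reassemble the two one-sided theta series into a single bilateral series: the substitution $n\mapsto-m-1$ in the second sum shows that
\begin{equation*}
\frac1b\sum_{n=0}^{\infty}(-1)^{n}q^{n(n+1)/2}(a/b)^{n}-\frac1a\sum_{n=0}^{\infty}(-1)^{n}q^{n(n+1)/2}(b/a)^{n}=\frac1b\sum_{n=-\infty}^{\infty}(-1)^{n}q^{n(n+1)/2}(a/b)^{n}.
\end{equation*}
Since $q^{n(n+1)/2}(a/b)^{n}=q^{n(n-1)/2}(aq/b)^{n}$, the Jacobi triple product identity --- in the form $\sum_{n=-\infty}^{\infty}(-1)^{n}z^{n}q^{n(n-1)/2}=(z;q)_{\infty}(q/z;q)_{\infty}(q;q)_{\infty}$, cf.\ \eqref{jtpi} --- applied with $z=aq/b$ (whence $q/z=b/a$) gives $\tfrac1b(aq/b;q)_{\infty}(b/a;q)_{\infty}(q;q)_{\infty}$. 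Finally, writing $(b/a;q)_{\infty}=(1-b/a)(bq/a;q)_{\infty}$ and using $(1-b/a)/b=1/b-1/a$ converts this into exactly $(1/b-1/a)(aq/b,bq/a,q;q)_{\infty}$, completing the proof.

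The genuinely substantive step is the symmetrization: recognizing that once the factor $1+1/b$ is introduced, Fine's manifestly asymmetric identity \eqref{nfine} pits the desired partial theta function against a sum that is invariant under $a\leftrightarrow b$, so that subtracting the $a\leftrightarrow b$ companion produces the two-term left-hand side of \eqref{reciprocitytheorem} with nothing left over. Everything else --- the specialization of \eqref{nfine}, the bilateral reassembly, and the triple product evaluation --- is routine, and the analytic side is harmless under the paper's standing convention that all products and series in sight are analytic (in particular $ab\neq0$, $b\neq-1$, and $a,b\notin\{-q^{-1},-q^{-2},\dots\}$).
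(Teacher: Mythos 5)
Your proof is correct, but there is essentially nothing in the paper to compare it against: Theorem \ref{ramanujanrec} is quoted there as a known result from the Lost Notebook (with references to \cite{lnb} and \cite{AB2}), and the only derivation the paper gestures at is that it is the case $c=0$ of Kang's Theorem \ref{kangthm}, which Kang in turn obtains from the ${}_1\psi_1$ summation \eqref{1psi1}. Your route is genuinely different and is self-contained within the paper's own toolkit: you specialize Fine's identity \eqref{nfine} at $(\beta,u)=(-a,b)$, normalize by $1+1/b$ so that Fine's middle series becomes the $a\leftrightarrow b$-symmetric sum $\sum_{n\ge0}q^{n}/\bigl((-aq;q)_{n}(-bq;q)_{n}\bigr)$, antisymmetrize to kill it, and evaluate the surviving bilateral theta series by the triple product. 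I checked the details: the cancellation $(1+1/b)\cdot b/(1+b)=1$ and the absorption $(-b;q)_{\infty}=(1+b)(-bq;q)_{\infty}$ are right; the reindexing $n\mapsto -n-1$ does convert $-\tfrac{1}{a}\sum_{n\ge0}(-1)^{n}q^{n(n+1)/2}(b/a)^{n}$ into $\tfrac{1}{b}\sum_{n\le-1}(-1)^{n}q^{n(n+1)/2}(a/b)^{n}$, since $(-1)^{-n-1}=-(-1)^{n}$, $(-n-1)(-n)/2=n(n+1)/2$, and $(a/b)^{-n-1}=(b/a)(b/a)^{n}$; and the triple product at $z=aq/b$ together with $(b/a;q)_{\infty}=(1-b/a)(bq/a;q)_{\infty}$ yields exactly $(1/b-1/a)(aq/b,bq/a,q;q)_{\infty}$. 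One small caution: the form of the triple product you invoke, $\sum_{n=-\infty}^{\infty}(-1)^{n}z^{n}q^{n(n-1)/2}=(z,q/z,q;q)_{\infty}$, is the correct one, but \eqref{jtpi} as printed in the paper contains a typo (its factor $(-q^{2};q^{2})_{\infty}$ should be $(q^{2};q^{2})_{\infty}$), so you should cite the standard statement directly rather than ``cf.\ \eqref{jtpi}.'' What your approach buys is a proof of the reciprocity theorem from the same two ingredients---Entry 6.3.1 of \cite{AB2} (i.e.\ \eqref{nfine}) and the Jacobi triple product---that the paper already uses elsewhere, which is fitting given that \eqref{nfine} and \eqref{reciprocitytheorem} sit on the same page of the Lost Notebook.
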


Soon-Yi Kang obtained the following beautiful generalization of Ramanujan's reciprocity theorem \cite[Theorem 4.1]{kang}.

\begin{theorem}\label{kangthm} If $|c|<|a|<1$ and $|c|<|b|<1$, then
\begin{align}\label{3varrt}
\rho_{3}(a, b, c;q)-\rho_{3}(b, a, c;q)=\left(\frac{1}{b}-\frac{1}{a}\right)\frac{(c,aq/b, bq/a, q;q)_{\infty}}{(-c/a, -c/b, -aq, -bq;q)_{\infty}},
\end{align}
where
\begin{align}\label{rho3}
\rho_{3}(a, b, c;q):=\left(1+\frac{1}{b}\right)\sum_{n=0}^{\infty}\frac{(c)_n(-1)^nq^{n(n+1)/2}a^nb^{-n}}{(-aq)_{n}(-c/b)_{n+1}}.
\end{align}
\end{theorem}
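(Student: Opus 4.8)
The plan is to establish \eqref{3varrt} by first obtaining a closed‑form decomposition of the single series defining $\rho_3(a,b,c;q)$ in \eqref{rho3}, and then reducing to the two‑variable case, Theorem~\ref{ramanujanrec}. The guiding observation is pure bookkeeping: the right‑hand side of \eqref{3varrt} is exactly $(c;q)_\infty\big/\big((-c/a;q)_\infty(-c/b;q)_\infty\big)$ times the right‑hand side of Ramanujan's reciprocity theorem \eqref{reciprocitytheorem}, and every factor of it other than $\big(\tfrac{1}{b}-\tfrac{1}{a}\big)$ is symmetric under $a\leftrightarrow b$. This strongly suggests trying to show that $\rho_3(a,b,c;q)$ splits as $\Psi(a,b,c;q)+\Xi(a,b,c;q)$, where $\Xi$ is symmetric under $a\leftrightarrow b$ and $\Psi(a,b,c;q)-\Psi(b,a,c;q)$ is precisely the product on the right of \eqref{3varrt}; for instance $\Psi$ could be $-\tfrac{1}{a}$ times $(c,aq/b,bq/a,q;q)_\infty\big/\big((-c/a,-c/b,-aq,-bq;q)_\infty\big)$, which is itself symmetric in $a\leftrightarrow b$. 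Granting such a split, subtracting from it the identity obtained by interchanging $a$ and $b$ annihilates $\Xi$ and yields \eqref{3varrt} at once.

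To produce the decomposition I would first recast the series in \eqref{rho3} as a basic hypergeometric sum over $n\ge 0$ carrying the single numerator Pochhammer $(c;q)_n$ and the denominator Pochhammers $(-aq;q)_n$ and $(-cq/b;q)_n$ (using $(-c/b;q)_{n+1}=(1+c/b)(-cq/b;q)_n$, and, if convenient, the confluence $(-1)^nq^{n(n+1)/2}(a/b)^n=\lim_{d\to\infty}(d;q)_n\big(aq/(bd)\big)^n$), and then apply a transformation of the type used throughout the paper. A preliminary transformation of basic hypergeometric series (in the spirit of the Heine‑type step used to pass from \eqref{3a} to \eqref{4a}), followed by Andrews' transformation \eqref{gea90_thm1} with $B$ chosen to carry the parameter $c$ and $A\to 0$ as in the proof of Theorem~\ref{theorem3}, should break $\rho_3(a,b,c;q)$ into a $\tfrac{1}{a}$‑multiple of a ratio of infinite products plus a residual series; it is in that residual series that the Pochhammer symbols must be made to reorganize into an expression invariant under $a\leftrightarrow b$. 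The final collapse of the infinite‑product part into the closed form $(c,aq/b,bq/a,q;q)_\infty\big/\big((-c/a,-c/b,-aq,-bq;q)_\infty\big)$ is then routine, carried out with the Jacobi triple product identity \eqref{jtpi}, exactly as a difference of two‑variable theta quotients is simplified in the classical proof of \eqref{reciprocitytheorem}.

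Two built‑in consistency checks will discipline the bookkeeping. First, setting $c=0$, so that $(c;q)_n\equiv 1$ and $(-c/b;q)_{n+1}\equiv 1$, must turn \eqref{3varrt} into Theorem~\ref{ramanujanrec}, so any candidate decomposition of $\rho_3$ incompatible with this is wrong. Second, the hypotheses $|c|<|a|<1$ and $|c|<|b|<1$ are exactly what guarantees absolute convergence of the series in \eqref{rho3}, of the series produced by \eqref{gea90_thm1}, and legitimacy of the product rearrangements, so they should be imposed at the outset. I expect the real work to be finding the precise specialization that forces the residual series to be symmetric in $a\leftrightarrow b$ while keeping the half‑dozen infinite products under control; as a fallback one can instead verify that both sides of \eqref{3varrt}, viewed as functions of $c$ with $a,b,q$ fixed, satisfy the same first‑order $q$‑difference equation $R(cq)=\big((1+c/a)(1+c/b)\big/(1-c)\big)R(c)$ and agree at $c=0$ by Theorem~\ref{ramanujanrec} — but checking that recurrence for $\rho_3(a,b,c;q)-\rho_3(b,a,c;q)$ requires a Pochhammer‑shift manipulation of comparable difficulty, so it only relocates the obstacle.
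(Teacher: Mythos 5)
A preliminary remark: the paper does not prove Theorem \ref{kangthm} at all. It is quoted verbatim from Kang \cite[Theorem 4.1]{kang}, with the remark that Kang shows \eqref{3varrt} to be equivalent to the ${}_1\psi_1$ summation \eqref{1psi1}. So there is no in-paper proof to compare against, and your attempt must be judged on its own.

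Judged that way, your proposal has a genuine gap: the entire argument rests on a decomposition $\rho_3(a,b,c;q)=\Psi+\Xi$ with $\Xi$ symmetric in $a\leftrightarrow b$, and you never produce it. You correctly identify the target for $\Psi$ (namely $-\tfrac{1}{a}$ times the symmetric product quotient) and you correctly observe that $c=0$ must recover Theorem \ref{ramanujanrec}, but these are constraints on an answer, not a derivation of one. The specific tool you nominate, Andrews' identity \eqref{gea90_thm1} with $A\to 0$, does not directly fit: its first right-hand series is of ${}_1\phi_1$ shape (one denominator Pochhammer together with the $q^{m(m-1)/2}$ factor), whereas $\rho_3$ carries a numerator Pochhammer $(c)_n$ and two denominator Pochhammers in addition to $q^{n(n+1)/2}$, so a nontrivial preliminary transformation is mandatory and you leave it unspecified ("in the spirit of the Heine-type step"). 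You yourself concede that "the real work" of forcing the residual series to be symmetric remains to be done; that is precisely the proof. The fallback via the $q$-difference equation $R(cq)=\tfrac{(1+c/a)(1+c/b)}{1-c}R(c)$ is sound in principle (the right-hand side of \eqref{3varrt} visibly satisfies it, and analyticity at $c=0$ plus the $c=0$ value does pin the function down), but, as you note, verifying the recurrence for $\rho_3(a,b,c;q)-\rho_3(b,a,c;q)$ is the same obstacle relocated.

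The workable route, which the paper itself signposts and then executes in reverse in the corollary following Theorem \ref{kangthm}, is to apply the $q$-analogue of Pfaff's transformation \eqref{kum} to each of $\rho_3(a,b,c;q)$ and $\rho_3(b,a,c;q)$, converting them into the two one-sided halves ($n\ge 0$ and $n<0$) of a single bilateral series; the difference is then exactly a ${}_1\psi_1$, and \eqref{1psi1} collapses it to the product side of \eqref{3varrt}. Compare \eqref{app0}--\eqref{app2}, where this is done for the special case $a=zq$, $b=-zq/\a$, $c=z^2q^2/\a$. If you carry out that computation with general $a,b,c$ you will have a complete proof; as written, your proposal stops short of it.
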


Kang \cite[p.~24]{kang} shows that \eqref{3varrt} is equivalent to the ${}_1\psi_{1}$ summation formula \eqref{1psi1}. Also, Ramanujan's reciprocity theorem \eqref{reciprocitytheorem} is the special case $c=0$ of Theorem \ref{kangthm}.
It turns out that a special case of Kang's result \eqref{3varrt} is simply a restatement of Corollary \ref{frelcor}. This is given in the following corollary.

\begin{corollary}
The identity \eqref{frel} is equivalent to the special case $a=zq$, $b=-zq/\a$ and $c=z^2q^2/\a$ of Kang's result \eqref{3varrt}.
\end{corollary}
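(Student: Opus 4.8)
The plan is to substitute $a=zq$, $b=-zq/\a$, $c=z^2q^2/\a$ into Kang's identity \eqref{3varrt} and to transform the resulting identity, by reversible elementary steps, into \eqref{frel} written out through \eqref{F}. The decisive feature of these parameters is that $c=-ab$, so that $-c/a=b$ and $-c/b=a$; one also records $c/a=zq/\a$, $c/b=-zq$, $aq/b=-\a q$, $bq/a=-q/\a$, $1/b-1/a=-(1+\a)/(zq)$, $-aq=-zq^2$ and $-bq=zq^2/\a$. Because $-c/b=a$, the denominator in \eqref{rho3} telescopes: $(-aq)_n(-c/b)_{n+1}=(1-a)(-aq)_n(aq)_n=(1-a)(a^2q^2;q^2)_n$. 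Hence $\rho_3(a,b,c;q)$ collapses to a single series in which $c$ enters only through $(-ab;q)_n=(z^2q^2/\a;q)_n$ in the numerator, and $\rho_3(b,a,c;q)$ is its image under $a\leftrightarrow b$. On the right-hand side of \eqref{3varrt} the relations $-c/a=b$ and $-c/b=a$ cause the shifted factors to pair off, and, after applying $(x;q)_\infty(-x;q)_\infty=(x^2;q^2)_\infty$ and the Jacobi triple product \eqref{jtpi}, the product becomes an explicit infinite product times a bilateral theta series.

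On the \eqref{frel} side I would write both members via \eqref{F} and record that the map $(\a,z)\mapsto(1/\a,-z/\a)$ interchanges $-zq^2/\a\leftrightarrow zq^2$ and $z^2q^2\leftrightarrow z^2q^2/\a^2$ while fixing $z^2q^2/\a$. Thus in \eqref{F} the factor $\bigl((-zq^2/\a;q^2)_\infty(zq^2;q^2)_\infty\bigr)^{-1}$ and the quotient $(z^2q^2/\a;q^2)_\infty/\bigl((z^2q^2;q^4)_\infty(z^2q^2/\a^2;q^4)_\infty\bigr)$ are invariant, while only $\a q/z\mapsto-q/z$ and $\sum_{n\ge0}\a^nq^{n^2+n}\mapsto\sum_{n\ge0}\a^{-n}q^{n^2+n}$ change. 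Cancelling the invariant prefactor, \eqref{frel} becomes a closed identity whose left-hand side is a fixed combination of the two partial sums $\sum_{m\ge0}\frac{(-\a/(zq);q^2)_{m+1}}{(-zq;q^2)_{m+1}}(zq)^m$ and $\sum_{m\ge0}\frac{(1/(zq);q^2)_{m+1}}{(zq/\a;q^2)_{m+1}}(-zq/\a)^m$, and whose right-hand side, after using $\a\sum_{n\ge0}\a^nq^{n^2+n}+\sum_{n\ge0}\a^{-n}q^{n^2+n}=\sum_{n\in\Z}\a^nq^{n^2-n}$ and then \eqref{jtpi} to evaluate this bilateral sum as $(-\a;q^2)_\infty(-q^2/\a;q^2)_\infty(-q^2;q^2)_\infty$, is again an explicit infinite product.

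It then remains to identify the two reduced identities, which now have the common shape: a fixed combination of two series interchanged by $\a\leftrightarrow\a^{-1}$ equals an explicit infinite product. Matching the product sides is a direct $q$-Pochhammer computation using the identities collected above. Matching the series sides is where the real work lies: the partial sums produced by \eqref{F} carry no $q^{n^2}$-type weight, whereas the collapsed $\rho_3$-series carry $q^{n(n+1)/2}$, so one needs one classical $q$-series transformation — Heine's transformation, or equivalently Ramanujan's ${}_1\psi_1$ summation \eqref{1psi1} taken with base $q^2$ as in the two proofs of Lemma \ref{1psi1spl} but now with $\a$ left free — to pass between them. This dovetails with Kang's remark \cite[p.~24]{kang} that \eqref{3varrt} is, in general, equivalent to \eqref{1psi1}: under that equivalence the parameter choice $a=zq$, $b=-zq/\a$, $c=z^2q^2/\a$ corresponds precisely to the $\a$-generalization of the specialization of \eqref{1psi1} already used in Lemma \ref{1psi1spl}. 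Since every step above is an equality valid in the stated analytic ranges and each step is reversible, \eqref{frel} and this special case of \eqref{3varrt} are equivalent. The main obstacle is the bridging transformation of the series sides; the remainder is bookkeeping.
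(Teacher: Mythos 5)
Your overall strategy --- specialize Kang's identity at $a=zq$, $b=-zq/\a$, $c=z^2q^2/\a$ (in base $q^2$), exploit $c=-ab$ so that $-c/a=b$ and $-c/b=a$ and the denominators of $\rho_3$ telescope, combine the two partial theta sums coming from \eqref{F} into the bilateral sum $\sum_{n\in\Z}\a^nq^{n^2-n}$ and evaluate it by \eqref{jtpi}, and then match the series sides and the product sides separately --- is exactly the shape of the paper's argument run in the opposite direction, and your bookkeeping of the parameter relations and of which factors in \eqref{F} are invariant under $(\a,z)\mapsto(1/\a,-z/\a)$ is correct.

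The gap is in the step you yourself flag as ``where the real work lies.'' The bridge between the weight-free sums $\sum_{m\ge 0}\frac{(-\a/(zq);q^2)_{m+1}}{(-zq;q^2)_{m+1}}(zq)^m$ and the $q^{n^2+n}$-weighted series inside $\rho_3\left(zq,-zq/\a,z^2q^2/\a;q^2\right)$ cannot be supplied by Heine's transformation or by the ${}_1\psi_1$ summation \eqref{1psi1}: Heine's transformation sends a ${}_2\phi_1$ to another ${}_2\phi_1$ and never introduces a quadratic exponent, while the ${}_1\psi_1$ is a summation formula, not a transformation between two non-summable series. The identity that actually does this job is Jackson's $q$-analogue of Pfaff's transformation \eqref{kum} --- a ${}_2\phi_1\to{}_2\phi_2$ transformation carrying the factor $q^{n(n-1)/2}$ --- applied with $q$ replaced by $q^2$ and with $a=q^2$, $c=-zq^3$, $b=z^2q^2/\a$, $x=zq$ for one sum and the mirrored parameters for the other; this is precisely how the paper obtains \eqref{app1} and \eqref{app2}. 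Until that transformation (or an equivalent one) is actually invoked, your two ``reduced identities'' are not visibly the same, so the claimed equivalence is not yet established; the rest of the proposal is sound bookkeeping.
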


\begin{proof}
Substituting the expressions for $F(\a, z;q)$ and $F\left(1/\alpha,-z/\alpha;q\right)$ from \eqref{F} in \eqref{frel} and simplifying, we see that
\begin{align}\label{equating}
&\frac{q^2}{(1+zq/\a)}\sum_{m =0}^{\infty}\frac{(-\a/(zq);q^2)_{m+1}}{(-zq;q^2)_{m+1}}(zq)^m
-\frac{q}{z}\frac{(z^2q^2/\a;q^2)_\infty}{(z^2q^2;q^4)_{\infty}(z^2q^2/\a^2;q^{4})_{\infty}}\sum_{n=0}^{\infty}\a^{n+1}q^{n^2+n}\nonumber\\
&=\frac{q^2}{(1-qz)}\sum_{m =0}^{\infty}\frac{(1/(zq);q^2)_{m+1}}{(zq/\a;q^2)_{m+1}}\left(-\frac{zq}{\a}\right)^m
+\frac{q}{z}\frac{(z^2q^2/\a;q^2)_\infty}{(z^2q^2;q^4)_{\infty}(z^2q^2/\a^2;q^{4})_{\infty}}\sum_{n=0}^{\infty}\a^{-n}q^{n^2+n}.
\end{align}
Replace $n$ by $-n-1$ in the partial theta function on the left-hand side.  Thus,
\begin{equation}\label{cov}
\sum_{n=0}^{\infty}\a^{n+1}q^{n^2+n}=\sum_{n=-\infty}^{-1}\a^{-n}q^{n^2+n}.
\end{equation}
Hence, from \eqref{equating} and \eqref{cov}, we see, upon simplification, that
\begin{align}\label{app0}
&\frac{(1+\a/(zq))}{(1+zq/\a)(1+zq)}\sum_{m =0}^{\infty}\frac{(-\a q/z;q^2)_{m}}{(-zq^3;q^2)_{m}}(zq)^m-\frac{(1-1/(zq))}{(1-qz)(1-zq/\a)}\sum_{m =0}^{\infty}\frac{(q/z;q^2)_{m}}{(zq^3/\a;q^2)_{m}}\left(-\frac{zq}{\a}\right)^m\nonumber\\
&=\frac{1}{zq}\frac{(z^2q^2/\a;q^2)_\infty}{(z^2q^2;q^4)_{\infty}(z^2q^2/\a^2;q^{4})_{\infty}}\sum_{n=-\infty}^{\infty}\left(\frac{\a}{q}\right)^{n}q^{n^2}.
\end{align}
The Jacobi triple product identity \eqref{jtpi} gives
\begin{align}\label{jac}
\sum_{n=-\infty}^{\infty}\left(\frac{\a}{q}\right)^{n}q^{n^2}=\left(-\a,-\frac{q^2}{\a},q^2;q^2\right)_{\infty}.
\end{align}
Moreover, the $q$-analogue of Pfaff's transformation established by Jackson \cite[p.~527]{qana} is given by
\begin{align}\label{kum}
\sum_{n=0}^{\infty}\frac{(c/b;q)_n(a;q)_nx^n}{(q;q)_n(c;q)_n}=\frac{(ax;q)_\infty}{(x;q)_\infty}
\sum_{n=0}^{\infty}\frac{(a;q)_n(b;q)_n(-1)^nq^{n(n-1)/2}(x c/b)^n}{(q;q)_n(c;q)_n(ax;q)_n}.
\end{align}
Replace $q$ by $q^2$, then let $a=q^2, c=-zq^3, b=z^2q^2/\a$ and $x=zq$ in \eqref{kum} to find that
\begin{align}\label{app1}
\sum_{m =0}^{\infty}\frac{(-\a q/z;q^2)_{m}}{(-zq^3;q^2)_{m}}(zq)^m&=\sum_{n=0}^{\infty}\frac{(z^2q^2/\a;q^2)_n\a^n q^{n^2+n}}{(-zq^3;q^2)_n(zq;q^2)_{n+1}}\nonumber\\
&=\frac{1}{(1-\a/(zq))}\rho_3\left(zq,-\frac{zq}{\a},\frac{z^2q^2}{\a};q^2\right),
\end{align}
where $\rho_{3}(a, b, c)$ is defined in \eqref{rho3}. Similarly, replacing $q$ by $q^2$, then letting $a=q^2,c=zq^3/\a, b=z^2q^2/\a$ and $x=-zq/\a$ in \eqref{kum}, we are led to
\begin{align}\label{app2}
\sum_{m =0}^{\infty}\frac{(q/z;q^2)_{m}}{(zq^3/\a;q^2)_{m}}\left(-\frac{zq}{\a}\right)^m
&=\sum_{n=0}^{\infty}\frac{(z^2q^2/\a;q^2)_n\a^{-n}q^{n^2+n}}{(zq^3/\a;q^2)_n(-zq/\a;q^2)_{n+1}}\nonumber\\
&=\frac{1}{(1+1/(zq))}\rho_3\left(-\frac{zq}{\a}, zq, \frac{z^2q^2}{\a};q^2\right).
\end{align}
Now substitute \eqref{jac}, \eqref{app1}, and \eqref{app2} in \eqref{app0} and simplify to deduce that
\begin{align}\label{3varrt1}
&-\frac{1}{(1+zq)(1-zq/\a)}\left\{\rho_3\left(zq,-\frac{zq}{\a},\frac{z^2q^2}{\a};q^2\right)-\rho_3\left(-\frac{zq}{\a}, zq, \frac{z^2q^2}{\a};q^2\right)\right\}\nonumber\\
&=\frac{1}{zq}\frac{(z^2q^2/\a, -\a, -q^2/\a, q^2;q^2)_\infty}{(z^2q^2;q^4)_{\infty}(z^2q^2/\a^2;q^{4})_{\infty}}.
\end{align}
Multiplying both sides of \eqref{3varrt1} by $-(1+zq)(1-zq/\a)$ and simplifying leads us to \eqref{3varrt} with $a=zq$, $b=-zq/\a$ and $c=z^2q^2/\a$. Since the steps are clearly reversible, we see that \eqref{frel} can also be derived from this special case of \eqref{3varrt}.
\end{proof}

\section{Generalization of the second Andrews-Yee identity}\label{aysecond}

In this section, we will first prove Theorem \ref{theorem4} and then derive \eqref{pnnz} as a special case of Theorem \ref{theorem4}.
\begin{proof}[Theorem \textup{\ref{theorem4}}][]
Note that the left-hand side can be written in the form
\begin{align}\label{simplification}
\sum_{n=0}^{\i}\frac{(-zq;q)_{2n}(-zq^{2n+2};q^2)_\i}{\left( -zq/\a;q\right)_{n}}q^n=(-zq^2;q^2)_\i\sum_{n=0}^{\infty} \frac{(-zq;q^2)_n }{(-zq/\a;q)_n }q^n.
\end{align}
We would like to transform the sum on the right-hand side of \eqref{simplification}.
To that end, we apply \eqref{gea90_thm1}
with $ A=B/bq$, $a= z/\a$ and $b \to 0$ to find that
\begin{align}\label{afterandrews}
 \sum_{n=0}^{\infty} \frac{(B^2;q^2)_n }{(-zq/\a;q)_n }q^n
&=-\frac{\a(B^2;q^2)_\i }{z(-zq/\a;q)_\i } \sum_{m=0}^{\infty} \frac{\left(B\a/z\right)^m}{\left(-B\a/z;q\right)_{m+1}}\nonumber\\
&\quad+\sum_{m=0}^{\infty} \frac{(-\a/z;q)_{m+1}}{\left(B^2\a^2 /z^2;q^2\right)_{m+1}}\left(-B^2\a / z\right)^mq^{m(m-1)/2}.
\end{align}
From \cite[Equation~(41)]{ady1},
\begin{align}\label{eq41}
\sum_{m=0}^{\infty}\frac{(B/z)^m}{(-B/z ;q)_{m+1}}=\sum_{m=0}^{\infty}(q;q^2)_m\left(-B/z\right)^{2m}.
\end{align}
Replace $z$ by $z/\a$ in \eqref{eq41} and substitute the resulting identity
$$\sum_{m=0}^{\infty} \frac{\left(B\a/z\right)^m}{\left(-B\a/z;q\right)_{m+1}}$$
on the right-hand side of \eqref{afterandrews}, so as to obtain
\begin{align}\label{BBB}
 \sum_{n=0}^{\infty} \frac{(B^2;q^2)_n }{(-zq/\a;q)_n }q^n
=&-\frac{\a(B^2;q^2)_\i }{z(-zq/\a;q)_\i } \sum_{m=0}^{\i}(q;q^2)_m\left(-B\a / z\right)^{2m}\nonumber\\
&+\sum_{m=0}^{\infty} \frac{(-\a/z;q)_{m+1}}{\left(B^2\a^2/z^2;q^2\right)_{m+1}}\left(-B^2\a / z\right)^{m}q^{m(m-1)/2}.
\end{align}
Now let $B^2 = -z q$ in \eqref{BBB} to deduce that
\begin{align}\label{aftersubs}
 \sum_{n=0}^{\infty} \frac{(-zq;q^2)_n }{(-zq/\a;q)_n }q^n
=&-\frac{\a(-zq;q^2)_\i }{z(-zq/\a;q)_\i } \sum_{m=0}^{\i}(q;q^2)_m\left(-q\a^2/z\right)^{m}\nonumber\\
&+\sum_{m=0}^{\infty} \frac{(-\a/z;q)_{m+1}}{\left(-\a^2 q/z;q^2\right)_{m+1}}\a^{m}q^{m(m+1)/2}.
\end{align}

From \cite[p.~29, Exercise 6]{gea},
\begin{equation*}
\sum_{m=0}^{\infty}(-xq/y;q^2)_{m}y^m=\sum_{m=0}^{\i}\frac{q^{m^2}x^m}{(y;q^2)_{m+1}}.
\end{equation*}
Let $x=-y=\a^2q/z$ to deduce that
\begin{align}\label{andrewsex6}
\sum_{m=0}^{\i}(q;q^2)_m\left(-q\a^2/z\right)^{m}&=\sum_{m=0}^{\infty}\frac{q^{m^2+m}(\a^2/z)^m}{(-\a^2q/z;q^2)_{m+1}}\nonumber\\
&=\nu\left(\frac{\a^2}{z},-\frac{\a^2}{z};-q\right).
\end{align}
Finally, substitute \eqref{andrewsex6} in \eqref{aftersubs} and then use the resulting identity in \eqref{simplification} to arrive at \eqref{gsayeqn1}.
\end{proof}

As a corollary of Theorem \ref{theorem4}, we obtain \eqref{pnnz} as a special case. To derive this, however, we require the following lemma.
\begin{lemma}
Let $\nu(\a, z; q)$ be defined in \eqref{1}. Then
\begin{equation}\label{fromreciprocity}
\nu(\a,z;q)+\frac{1}{\a}\nu\left(\frac{1}{\a},-\frac{z}{\a};q\right)=\frac{(-\a q^2,-1/\a,q^2;q^2)_{\infty}}{(-zq,zq/\a;q^2)_{\infty}}.
\end{equation}
\end{lemma}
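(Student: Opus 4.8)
The plan is to derive \eqref{fromreciprocity} by specializing Ramanujan's reciprocity theorem \eqref{reciprocitytheorem}, in the same spirit as the $\rho_3$ computations of Section \ref{***}, but using the classical $c=0$ version. First I would recall from \eqref{1} and the Fine-type rewriting used in the proof of Theorem \ref{newrelation} that
\begin{equation*}
\nu(\a,z;q)=\sum_{n=0}^{\infty}\frac{\a^nq^{n^2+n}}{(-zq;q^2)_{n+1}},
\end{equation*}
and I would seek a companion series of partial-theta type. The natural move is to replace $q$ by $q^2$ in \eqref{reciprocitytheorem} and choose $a,b$ so that the two series on the left become, after reindexing, the series defining $\nu(\a,z;q)$ and $\tfrac1\a\nu(1/\a,-z/\a;q)$ respectively. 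Matching the shifted $q$-Pochhammer $(-aq^2;q^2)_n$ against $(-zq;q^2)_{n+1}$ up to an overall factor suggests taking $a$ proportional to $z/q$ (so that $-aq^2 = -zq$) and $b$ proportional to $-z/(\a q)$, with the ratio $a/b$ producing the factor $\a$ in $\a^nq^{n^2+n}$; the roles of $a$ and $b$ swap under $(\a,z)\mapsto(1/\a,-z/\a)$, which is exactly the symmetry reflected on the left side of \eqref{fromreciprocity}.

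The key steps, in order, would be: (i) perform the substitution $q\mapsto q^2$, $a=z/q$, $b=-z/(\a q)$ (or the scaled variants dictated by making the prefactors $1+1/b$, $1+1/a$ come out to the coefficients appearing in \eqref{fromreciprocity}) in \eqref{reciprocitytheorem}; (ii) identify the first sum on the left with $\nu(\a,z;q)$ up to an explicit monomial/Pochhammer factor, absorbing the $(-aq^2;q^2)_n = (-zq;q^2)_n$ versus $(-zq;q^2)_{n+1}$ discrepancy by splitting off the $n=0$ term or by a single index shift; (iii) identify the second sum on the left with $\tfrac1\a\nu(1/\a,-z/\a;q)$ by the same bookkeeping, noting the sign $(-1)^n$ is killed because $b/a=-1/\a$ carries its own sign into $a^{-n}b^n=(-1/\a)^n$; (iv) evaluate the product side $(1/b-1/a)\dfrac{(aq/b,bq/a,q;q)_{\infty}}{(-aq,-bq;q)_{\infty}}$ with the chosen parameters and $q\mapsto q^2$, simplifying $aq^2/b = -\a$, $bq^2/a = -q^2/\a$ (or their reciprocals times a power of $q$), and $-aq^2=-zq$, $-bq^2 = zq/\a$, so the right side collapses to $\dfrac{(-\a q^2,-1/\a,q^2;q^2)_{\infty}}{(-zq,zq/\a;q^2)_{\infty}}$; (v) divide through by whatever common monomial prefactor appeared in (ii)--(iii) to reach \eqref{fromreciprocity} exactly.

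The main obstacle I anticipate is purely one of bookkeeping rather than of idea: getting the prefactors $1+1/a$, $1+1/b$ and the overall scaling to match so that the two partial theta series on the left of \eqref{reciprocitytheorem} become precisely $\nu(\a,z;q)$ and $\tfrac1\a\nu(1/\a,-z/\a;q)$ — with the correct Pochhammer in the denominator, i.e.\ $(-zq;q^2)_{n+1}$ and not $(-zq^3;q^2)_n$ or some $q$-shifted variant — may require an auxiliary factor of $(1+zq)$ or $q$ that one must track carefully and cancel against the product side. A secondary check is to confirm convergence/analyticity: \eqref{reciprocitytheorem} holds for $ab\neq 0$, and the specialization must respect the standing assumption that all series converge, which is guaranteed here since the $\nu$-series are entire in $\a$ for $|q|<1$. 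As a sanity check one can set $\a=1$ in \eqref{fromreciprocity}: the left side becomes $2\nu(1,z;q)=2\nu(z;q)$ and the right becomes $\dfrac{(-q^2,-1,q^2;q^2)_{\infty}}{(-zq,zq;q^2)_{\infty}} = \dfrac{2(q^2;q^2)_\infty(-q^2;q^2)^2_\infty}{(z^2q^2;q^4)_\infty}$, which is twice \eqref{ramanujanomeganuz} minus the $\omega$ term — consistent once one notes the $\a=1$ case of \eqref{newrelationeqn} relates the two; this consistency check would be my first line of defense against sign errors.
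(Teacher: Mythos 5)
Your approach is exactly the paper's: replace $q$ by $q^2$ in Ramanujan's reciprocity theorem \eqref{reciprocitytheorem} and specialize $a$ and $b$ so that the two partial theta series become $\nu(\a,z;q)$ and $\tfrac{1}{\a}\nu(1/\a,-z/\a;q)$, while the product side collapses to the right-hand side of \eqref{fromreciprocity}. The one correction needed is in the parameters: the paper takes $a=zq$ and $b=-zq/\a$ (not $a=z/q$, $b=-z/(\a q)$), so that the denominators are $(-zq^3;q^2)_n=(-zq;q^2)_{n+1}/(1+zq)$ and $(zq^3/\a;q^2)_n=(zq/\a;q^2)_{n+1}/(1-zq/\a)$ --- a mismatch by \emph{constants}, which disappears after dividing both sides by $(1+zq)(1-\a/(zq))$. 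With your primary choice, for which $-aq^2=-zq$, the first denominator would be $(-zq;q^2)_n$, which differs from the needed $(-zq;q^2)_{n+1}$ by the $n$-dependent factor $(1+zq^{2n+1})$ and cannot be absorbed into a prefactor; you correctly anticipate needing ``an auxiliary factor of $(1+zq)$,'' but that fix is only available with the larger choice of $a$ and $b$. Everything else --- the sign bookkeeping $(-1)^na^nb^{-n}=\a^n$, the evaluation of the infinite products, and the $(\a,z)\mapsto(1/\a,-z/\a)$ symmetry --- matches the paper's computation.
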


\begin{proof} Replace $q$ by $q^2$ in Ramanujan's reciprocity theorem \eqref{reciprocitytheorem}
and then set $a=zq$ and $b=-zq/\alpha$ in the resulting identity to deduce that
\begin{gather}
\left(1-\df{\alpha}{zq}\right)\sum_{n=0}^{\infty}\df{\alpha^nq^{n(n+1)}}{(-zq^3;q^2)_n}
-\left(1+\df{1}{zq}\right)\sum_{n=0}^{\infty}\df{\alpha^{-n}q^{n(n+1)}}{(zq^3/\alpha;q^2)_n}\notag\\
=\left(-\df{\alpha}{zq}-\df{1}{zq}\right)_{\infty}\df{(-\alpha q^2,-q^2/\alpha,q^2;q^2)_{\infty}}{(-zq^3,zq^3/\alpha;q^2)_{\infty}}.\label{rr1}
\end{gather}
Dividing both sides of \eqref{rr1} by $(1+zq)(1-\alpha/(zq))$ and simplifying, we find that
\begin{gather}
\sum_{n=0}^{\infty}\df{\alpha^nq^{n(n+1)}}{(-zq;q^2)_{n+1}}
+\df{1}{\alpha}\sum_{n=0}^{\infty}\df{\alpha^{-n}q^{n(n+1)}}{(zq/\alpha;q^2)_{n+1}}\notag\\
=-\df{(\alpha+1)}{zq}\df{(-\alpha q^2,-q^2/\alpha,q^2;q^2)_{\infty}}{(1-\alpha/(zq))(-zq,zq^3/\alpha;q^2)_{\infty}}.\label{rr2}
\end{gather}
Now,
\begin{align}\label{rr3}
\df{(-q^2/\alpha;q^2)_{\infty}}{(zq^3/\alpha;q^2)_{\infty}(1-\alpha/(zq))}
&=\df{(-1/\alpha;q^2)_{\infty}}{(zq^3/\alpha;q^2)_{\infty}(1+1/\alpha)(1-\alpha/(zq))}\notag\\
&=-\df{zq}{(\alpha+1)}\df{(-1/\alpha;q^2)_{\infty}}{(zq/\alpha;q^2)_{\infty}}.
\end{align}
Lastly, put \eqref{rr3} into \eqref{rr2} and recall the definition of $\nu(\alpha,z;q)$ from \eqref{1}.  We thus deduce that
$$ \nu(\a,z;q)+\frac{1}{\a}\nu\left(\frac{1}{\a},-\frac{z}{\a};q\right)=\frac{(-\a q^2,-1/\a,q^2;q^2)_{\infty}}{(-zq,zq/\a;q^2)_{\infty}},
$$
which is what we wanted to prove.

\end{proof}

\begin{corollary}\label{gsayspl}
For $|q|<1$ and $z\in \mathbb{Z}$,
\begin{align}\label{newprrof1}
\sum_{n=0}^{\infty}q^n(-zq^{n+1};q)_{n} (-zq^{2n+2};q^2)_{\infty}
=\nu_1(z;-q)=\sum_{n=0}^{\i}\frac{z^nq^{n^2+n}}{(q;q^2)_{n+1}}.
\end{align}
\end{corollary}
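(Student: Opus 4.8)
\textbf{Proof proposal for Corollary \ref{gsayspl}.}

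The plan is to specialize Theorem \ref{theorem4} by setting $\a = z$, which collapses the ratio $(-zq;q)_{2n}/(-zq/\a;q)_n = (-zq;q)_{2n}/(-q;q)_n$ on the left-hand side into a cleaner form, and — crucially — turns the argument $\nu(\a^2/z, -\a^2/z; -q)$ on the right into $\nu(z, -z; -q)$, while simultaneously forcing the infinite products $(-zq;q)_\infty/(-zq/\a;q)_\infty = (-zq;q)_\infty/(-q;q)_\infty$ and the prefactor $-\a/z = -1$. So first I would carefully write out \eqref{gsayeqn1} with $\a = z$, simplify each product, and record the resulting identity.

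Next, the term $\nu(z, -z; -q)$ that appears must be related back to the one-variable function $\nu_1(z;-q)$ we want. Here I would invoke the lemma just proved, identity \eqref{fromreciprocity}, but more to the point observe that when the two "hidden" variables in $\nu(\a,z;q)$ are made dependent in exactly this way, the partial theta piece telescopes: indeed from the definition \eqref{1}, $\nu(z,-z;-q) = \sum_{n\ge0} z^n (-1)^{n^2+n} q^{n^2+n} / (zq;q^2)_{n+1} = \sum_{n\ge0} z^n q^{n^2+n}/(zq;q^2)_{n+1}$, and I would need to reconcile this with $\nu_1(z;-q) = \sum_{n\ge0} z^n q^{n^2+n}/(q;q^2)_{n+1}$ from \eqref{nuqz2} after the substitution $q\mapsto -q$. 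The cleanest route is probably not to massage $\nu(z,-z;-q)$ directly but rather to use \eqref{fromreciprocity} with $\a=z$ (and $q\mapsto q$ appropriately), which expresses the combination of $\nu$-values that actually shows up in the specialized Theorem \ref{theorem4} as a single infinite product, and then to match that product against $(-zq^2;q^2)_\infty \sum_n (-\a/z;q)_{n+1}\a^n q^{n(n+1)/2}/(-\a^2 q/z;q^2)_{n+1}$ with $\a = z$, where $(-\a/z;q)_{n+1} = (-1;q)_{n+1} = 0$ for $n\ge 1$, killing all but the $n=0$ term of that second sum.

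So the key simplification I expect is that \textbf{at $\a = z$ the factor $(-\a/z;q)_{n+1} = (-1;q)_{n+1}$ vanishes for every $n\ge1$}, collapsing the second sum on the right of \eqref{gsayeqn1} to a single explicit term ($n=0$ gives $(-1;q)_1 = 2$ divided by $(-q;q^2)_1 = 1+q$, times the prefactor), and that combined with the $\nu$-term — rewritten via \eqref{fromreciprocity} or by direct inspection as noted above — the whole right-hand side telescopes to $(-zq^2;q^2)_\infty \cdot (\text{something})$ that one then recognizes, after also simplifying the left side $\sum_n (-zq;q)_{2n}(-zq^{2n+2};q^2)_\infty q^n /(-q;q)_n = (-zq^2;q^2)_\infty \sum_n (-zq;q^2)_n q^n/(-q;q)_n$ via \eqref{simplification}, as the Andrews–Yee identity \eqref{pnnz} at the appropriate specialization — equivalently \eqref{pnn} generalized — giving $\nu_1(z;-q)$. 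The main obstacle will be the bookkeeping of signs and $q\mapsto -q$ substitutions: since $\nu(\cdot,\cdot;-q)$ carries $(-1)^{n^2+n} = 1$ harmlessly but the products $(-zq;q)_{2n}$ versus $(zq;q^2)_{n+1}$ shift in nontrivial ways under $q\mapsto -q$, I would need to track these very deliberately. Once the single surviving term and the product are pinned down, the identification with $\sum_n z^n q^{n^2+n}/(q;q^2)_{n+1}$ — and hence with $\nu_1(z;-q)$ via \eqref{nuqz2} with $q$ replaced by $-q$ — should be immediate, and the hypothesis $z\in\mathbb{Z}$ (really $|z|$ bounded, ensuring convergence) is only needed for analyticity as flagged at the end of the introduction.
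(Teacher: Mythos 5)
There is a genuine gap: your specialization is the wrong one. To recover the Andrews--Yee left-hand side $\sum_{n\ge0}q^n(-zq^{n+1};q)_n(-zq^{2n+2};q^2)_\i$ from \eqref{gsayeqn1} you need the ratio $(-zq;q)_{2n}/(-zq/\a;q)_n$ to telescope to $(-zq^{n+1};q)_n$, which happens precisely at $\a=1$ (so that the denominator is $(-zq;q)_n$), not at $\a=z$. At $\a=z$ the denominator is $(-q;q)_n$ and you obtain a genuinely different series; that specialization is the content of the paper's Theorem \ref{a=zfunctional}, which yields a relation involving both $\nu(z,-z;-q)$ and $\omega_1(z,-z;-q)$ rather than \eqref{pnnz}. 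Moreover, your key simplification is false: at $\a=z$ one has $(-\a/z;q)_{n+1}=(-1;q)_{n+1}=2(-q;q)_n\neq0$, so the second sum does not collapse to its $n=0$ term. You have confused this with the case $\a=-z$, where $(1;q)_{n+1}=0$ kills that sum entirely (the paper's Corollary \ref{alpha=-z}).

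The paper's actual argument sets $\a=1$, producing $-\tfrac1z\nu(1/z,-1/z;-q)$ plus the sum $(-zq^2;q^2)_\i\sum_{n\ge0}(-1/z;q)_{n+1}q^{n(n+1)/2}/(-q/z;q^2)_{n+1}$ on the right. It then uses the reciprocity lemma \eqref{fromreciprocity} (with $z\mapsto-1/z$, $q\mapsto-q$, $\a=1/z$) to convert $\nu(1/z,-1/z;-q)$ into $\nu_1(z;-q)$ plus an explicit infinite product, and evaluates the remaining sum in closed form via the $q$-analogue of Gauss's second theorem \eqref{gauss}; the two product contributions cancel, leaving $\nu_1(z;-q)$. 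Your instinct to use \eqref{fromreciprocity} to move between dependent-argument $\nu$-values is the right one, but it must be applied to $\nu(1/z,-1/z;-q)$ coming from $\a=1$, and the Gauss summation step is an essential ingredient your outline omits.
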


\begin{proof}
Let $\a=1$ in Theorem \ref{theorem3} to deduce that
\begin{align}\label{spad}
\sum_{n=0}^{\infty}q^n(-zq^{n+1};q)_{n} (-zq^{2n+2};q^2)_{\infty}=&-\frac{1}{z}\nu\left(\frac{1}{z},-\frac{1}{z};-q\right)\nonumber\\
&+(-zq^2;q^2)_{\infty}\sum_{n=0}^{\infty}\frac{\left(-1/z;q\right)_{n+1}q^{n(n+1)/2}}{\left(-q/z;q^2\right)_{n+1}}.
\end{align}
We now transform $\nu\left(1/z,-1/z;-q\right)$ into $\nu(z,1;-q)$ using \eqref{fromreciprocity}. To that end, replace $z$ and $q$ in \eqref{fromreciprocity} by $-1/z$ and $-q$, respectively, and then let $\a=1/z$, thereby obtaining
\begin{equation}\label{spab}
\nu\left(\dfrac{1}{z},-\dfrac{1}{z};-q\right)=\frac{(-z,-q^2/z,q^2;q^2)_{\infty}}{(q,-q/z;q^2)_{\infty}}-z\nu_1\left(z;-q\right),
\end{equation}
where we also used the fact that $\nu\left(z,1;-q\right)=\nu_1(z;-q)$, as can be seen from \eqref{nuqz2} and \eqref{1}.
Also, using  \eqref{gauss} with $a=q$ and $b=-q/z$ in the second equality below, and using  $(-q;q)_{\i}=1/(q;q^2)_{\i}$ in the third equality below, we find that
\begin{align}\label{spac}
\sum_{n=0}^{\infty}\frac{\left(-1/z;q\right)_{n+1}q^{n(n+1)/2}}{\left(-q/z;q^2\right)_{n+1}}
&=\frac{\left(1+1/z\right)}{\left(1+q/z\right)}
\sum_{n=0}^{\infty}\frac{\left(-q/z;q\right)_{n}q^{n(n+1)/2}}{\left(-q^3/z;q^2\right)_{n}}\notag\\
&=\frac{\left(1+1/z\right)}{\left(1+q/z\right)}\left\{\frac{(-q;q)_{\infty}(q^2,-q^2/z;q^2)_{\infty}}{(-q^3/z;q^2)_{\infty}}\right\}\notag\\
&=\df{(q^2,-1/z;q^2)_{\infty}}{(q,-q/z;q^2)_{\infty}}.
\end{align}
Hence, putting \eqref{spab} and \eqref{spac} in the right-hand side of \eqref{spad}, we arrive at
\begin{align}\label{rr5}
&-\frac{1}{z}\nu\left(\frac{1}{z},-\frac{1}{z};-q\right)
+(-zq^2;q^2)_{\infty}\sum_{n=0}^{\infty}\frac{\left(-1/z;q\right)_{n+1}q^{n(n+1)/2}}{\left(-q/z;q^2\right)_{n+1}}\notag\\
&=\nu_1(z;-q)- \frac{z(-z,-q^2/z,q^2;q^2)_{\infty}}{(q,-q/z;q^2)_{\infty}}
+(-zq^2;q^2)_{\infty}\df{(q^2,-1/z;q^2)_{\infty}}{(q,-q/z;q^2)_{\infty}}.
\end{align}
Now,
\begin{equation}\label{rr6}
(-zq^2;q^2)_{\infty}\df{(q^2,-1/z;q^2)_{\infty}}{(q,-q/z;q^2)_{\infty}}
=\df{(1+1/z)(-zq^2,-q^2/z, q^2;q^2)_{\infty}}{(q,-q/z;q^2)_{\infty}}
=\df{(-z,-q^2/z,q^2;q^2)_{\infty}}{z(q,-q/z;q^2)_{\infty}}.
\end{equation}
Putting \eqref{rr6} into \eqref{rr5} and then \eqref{rr5} into \eqref{spad}, we see, upon cancellation, that we have proved \eqref{newprrof1}.
\end{proof}

\section{Applications of Theorem \ref{theorem4}}\label{aysecond1}

\subsection{The case $\a=z$ of Theorem \ref{theorem4} and its corollaries}

\begin{theorem}\label{a=zfunctional}
Let $\omega_1(\a,z;q)$ and $\nu(\a,z;q)$ be defined in \eqref{2b} and \eqref{1}, respectively. Then
\begin{align}\label{a=zfunctionaleqn}
\nu(z,-z;-q)+2q \omega_1(z,-z;-q)=\sum_{n=0}^{\i}\frac{(-q^{n+1};q)_\i q^n}{(-zq^{2n+1};q^2)_\i}.
\end{align}
\end{theorem}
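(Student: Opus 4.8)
Setting $\a = z$ in \eqref{gsayeqn1} makes the Pochhammer $(-zq/\a;q)_n = (-q;q)_n$ lose its dependence on $z$, and the prefactor $(-zq;q)_\infty / (-zq/\a;q)_\infty$ collapses to $(-zq;q)_\infty / (-q;q)_\infty$. On the right-hand side, $\nu(\a^2/z, -\a^2/z; -q)$ becomes $\nu(z, -z; -q)$, and the last series becomes $(-zq^2;q^2)_\infty \sum_{n\ge 0} (-q;q)_{n+1} z^n q^{n(n+1)/2} / (-zq;q^2)_{n+1}$. So after the substitution I have an identity relating $\sum_n (-zq;q)_{2n}(-zq^{2n+2};q^2)_\infty q^n / (-q;q)_n$ to $-\nu(z,-z;-q)$ (times a $q$-product) plus that last series. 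The target \eqref{a=zfunctionaleqn}, however, is phrased with base $-q$ and contains $2q\,\omega_1(z,-z;-q)$, so the key will be to replace $q$ by $-q$ throughout and use the functional relation of Theorem \ref{newrelation} to convert the partial-theta-type term $\sum \a^n q^{n^2+n}$ (which appears implicitly through $\nu$) into the $\omega_1$ term. Concretely, Theorem \ref{newrelation} with $(\a,z)\mapsto(z,-z)$ and $q\mapsto -q$ gives $\nu(z,-z;-q)$ in terms of an infinite product times $\sum_n z^n (-q)^{n^2+n} = \sum_n z^n q^{n^2+n}$ minus $\frac{-zq}{z}\omega_1(z,-z;-q) = -q\,\omega_1(z,-z;-q)$; combining this with the specialized Theorem \ref{theorem4} is what should produce the clean form with $2q\,\omega_1$.

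\textbf{Step by step:} First I would set $\a = z$ in \eqref{gsayeqn1} and use \eqref{simplification} (also at $\a=z$) to rewrite the left side as $(-zq^2;q^2)_\infty \sum_{n\ge 0}(-zq;q^2)_n q^n/(-q;q)_n$. Second, I would simplify the last series on the right of \eqref{gsayeqn1} at $\a=z$ using the $q$-Gauss second theorem \eqref{gauss} — exactly as was done in \eqref{spac} in the proof of Corollary \ref{gsayspl}, but now with parameters reflecting $\a = z$ rather than $\a = 1$; this should evaluate $\sum_n (-1;q)_{n+1} z^n q^{n(n+1)/2}/(-zq/z\cdot\text{something};q^2)_{n+1}$ to an infinite product. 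Wait — more carefully, at $\a = z$ the term is $\sum_n (-\a/z;q)_{n+1}\a^n q^{n(n+1)/2}/(-\a^2 q/z;q^2)_{n+1} = \sum_n (-1;q)_{n+1} z^n q^{n(n+1)/2}/(-zq;q^2)_{n+1}$; since $(-1;q)_{n+1} = 2(-q;q)_n$, this is $2\sum_n (-q;q)_n z^n q^{n(n+1)/2}/(-zq;q^2)_{n+1}$, and I would like to recognize this (after replacing $q\mapsto -q$) as related to $\omega_1$ or to $\nu_1$. Third, I would perform the substitution $q \mapsto -q$ everywhere and match terms. Fourth, I would invoke Theorem \ref{newrelation} at $(\a,z,q) = (z,-z,-q)$ to eliminate the explicit product-times-$\sum z^n q^{n^2+n}$ term in favor of $\nu(z,-z;-q) + q\,\omega_1(z,-z;-q)$, which after combining with the naturally occurring $-\nu(z,-z;-q) - q\,\omega_1(z,-z;-q)$ from the specialization should leave precisely $\nu(z,-z;-q) + 2q\,\omega_1(z,-z;-q)$ on one side and the stated $\sum_n (-q^{n+1};q)_\infty q^n/(-zq^{2n+1};q^2)_\infty$ on the other.

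\textbf{The main obstacle} I anticipate is bookkeeping the base changes $q \leftrightarrow q^2 \leftrightarrow -q$ consistently — Theorem \ref{theorem4} is stated in base $q$ while the target is in base $-q$, and the auxiliary identities (\eqref{gauss}, Theorem \ref{newrelation}) each carry their own base conventions, so sign errors in exponents like $q^{n^2+n}$ versus $(-q)^{n^2+n}$ (which happen to agree since $n^2+n$ is even) versus $q^{n(n+1)/2}$ versus $(-q)^{n(n+1)/2}$ (which do \emph{not} agree in general) are the real danger. The other delicate point is recognizing the right-hand side $\sum_{n\ge 0}(-q^{n+1};q)_\infty q^n/(-zq^{2n+1};q^2)_\infty$ as the correct rewriting of the left-hand-side sum $(-zq^2;q^2)_\infty\sum_n(-zq;q^2)_n q^n/(-q;q)_n$ after $q\mapsto -q$; this is a purely formal Pochhammer manipulation — writing $(-zq^2;q^2)_\infty = (-zq^{2n+2};q^2)_\infty(-zq^2;q^2)_n$ and reindexing — but it must be done with care. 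Once those conventions are pinned down, the proof is a short chain of substitutions with no deep new input beyond Theorems \ref{newrelation} and \ref{theorem4} and the $q$-Gauss summation \eqref{gauss}.
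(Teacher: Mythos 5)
Your overall architecture is exactly the paper's: specialize Theorem \ref{theorem4} at $\a=z$, reduce the last series on the right of \eqref{gsayeqn1} to a product times the partial theta function $\sum_{n\ge0}z^nq^{n^2+n}$, and then use Theorem \ref{newrelation} with $\a=z$, $z\mapsto-z$, $q\mapsto-q$ to trade that partial theta function for $\nu(z,-z;-q)+q\,\omega_1(z,-z;-q)$; combined with the coefficient $2$ coming from $(-1;q)_{n+1}=2(-q;q)_n$ and the naturally occurring $-\nu(z,-z;-q)$, this yields $\nu+2q\omega_1$. (Your final bookkeeping sentence garbles this slightly -- the specialization produces only $-\nu$, not $-\nu-q\omega_1$, and the factor $2$ is what makes the count come out right -- but that is a sketching slip, not a conceptual error. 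Likewise your proposed global $q\mapsto-q$ substitution is unnecessary: the $-q$ already sits inside $\nu(\a^2/z,-\a^2/z;-q)$ in \eqref{gsayeqn1}, and the left side of \eqref{gsayeqn1} rewrites as $\sum_{n\ge0}(-q^{n+1};q)_\i q^n/(-zq^{2n+1};q^2)_\i$ after multiplying by $(-q;q)_\i/(-zq;q)_\i$, with no base change at all.)

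The genuine gap is the key summation step. At $\a=z$ the last series is $2\sum_{n\ge0}(-q;q)_nz^nq^{n(n+1)/2}/(-zq;q^2)_{n+1}$, and the $q$-analogue of Gauss's second theorem \eqref{gauss} cannot evaluate it: \eqref{gauss} has no slot for a free geometric factor $z^n$ in the summand, and it produces an infinite product, whereas this series is \emph{not} an infinite product in $z$ -- it is a partial theta function. The identity you actually need (and do not supply) is
\begin{equation*}
\sum_{n=0}^{\infty}a^nq^{n^2+2n}=\sum_{n=0}^{\infty}\frac{(-q;q)_na^nq^{n(n+3)/2}}{(-aq^2;q^2)_{n+1}},
\end{equation*}
from Ramanujan's lost notebook (see \cite[p.~25]{AB2}); with $a=z/q$ it gives precisely $\sum_{n\ge0}(-q;q)_nz^nq^{n(n+1)/2}/(-zq;q^2)_{n+1}=\sum_{n\ge0}z^nq^{n^2+n}$, which is the form your Step 4 silently presupposes. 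Without this identity (or an equivalent, e.g.\ a suitable specialization of the Rogers--Fine identity) the proof does not close, so you should replace the appeal to \eqref{gauss} by this evaluation; everything else in your plan then goes through as in the paper.
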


\begin{proof}
Let $\a=z$ in \eqref{gsayeqn1} and then use the identity \cite[p.~25]{AB2}
\begin{equation*}
\sum_{n=0}^{\infty}a^nq^{n^2+2n}=\sum_{n=0}^{\infty}\frac{(-q;q)_na^nq^{n(n+3)/2}}{(-aq^2;q^2)_{n+1}},
\end{equation*}
with $a$ replaced by $z/q$, to transform the series on the extreme right of \eqref{gsayeqn1} to deduce that
\begin{align}\label{intermediateeqn}
\sum_{n=0}^{\i}\frac{(-zq;q)_{2n}(-zq^{2n+2};q^2)_\i q^n}{\left(-q;q\right)_{n}}
&=-\frac{(-zq;q)_{\i}}{\left( -q;q\right)_{\i}}\nu\left(z,-z;-q\right)+2(-zq^2;q^2)_{\infty}\sum_{n=0}^{\infty}z^nq^{n^2+n}.
\end{align}
From Theorem \textup{\ref{newrelation}}, with $z$ replaced by $-z$, $q$ replaced by $-q$, and $\alpha=z$,
\begin{align}\label{relation-z}
\sum_{n=0}^{\infty}z^nq^{n^2+n}=(-zq;q^2)_{\infty}(q;q^{2})_{\infty}\left(\nu{(z, -z; -q)}+q\omega_{1}(z, -z; -q)\right).
\end{align}
Substituting \eqref{relation-z} in \eqref{intermediateeqn} and using $(q;q^2)_{\i}=1/(-q;q)_{\i}$ leads to
\begin{align*}
\sum_{n=0}^{\i}\frac{(-zq;q)_{2n}(-zq^{2n+2};q^2)_\i q^n}{\left(-q;q\right)_{n}}=\frac{(-zq;q)_{\i}}{\left( -q;q\right)_{\i}}\nu\left(z,-z;-q\right)+2q\frac{(-zq;q)_{\i}}{\left( -q;q\right)_{\i}}\omega\left(z,-z;-q\right).
\end{align*}
Now multiply both sides above by $(-q;q)_{\i}/(-zq;q)_{\i}$ to arrive at \eqref{a=zfunctionaleqn}.
\end{proof}

The first identity in the corollary below can be found in Ramanujan's lost notebook \cite[p.~5]{lnb}, \cite[p.~119, Entry 6.3.5]{AB2}. To the best of our knowledge,
the second identity is not given anywhere in the literature.

\begin{corollary}\label{sos}
For any complex numbers $z$ and $q$ such that $|q|<1$,
\begin{align}
 \sum_{n=0}^{\infty}\frac{(-zq;q^2)_nq^n}{(-q;q)_n}
&=2\sum_{n=0}^{\infty}z^nq^{n^2+n}-(q;q^2)_{\infty}(-zq;q^2)_{\infty}\,\nu(z,-z;-q)\label{6.3.5}
\intertext{and}
 \sum_{n=0}^{\infty}\frac{(-zq;q^2)_nq^n}{(-q;q)_n}
&=\sum_{n=0}^{\infty}z^nq^{n^2+n}+(q;q^2)_{\infty}(-zq;q^2)_{\infty}\,q\omega_1(z,-z;-q).\label{10.4b}
\end{align}
\end{corollary}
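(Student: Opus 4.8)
\textbf{Proof proposal for Corollary \ref{sos}.}
The plan is to extract the two identities of Corollary \ref{sos} by combining Theorem \ref{a=zfunctional} with the intermediate identities that appear in its proof. The key observation is that the proof of Theorem \ref{a=zfunctional} passes through two separate expressions for the same left-hand side sum, and each of these, after simplification via \eqref{simplification}, yields one of the two claimed identities.

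First I would return to \eqref{intermediateeqn}, which states that
\begin{align*}
\sum_{n=0}^{\i}\frac{(-zq;q)_{2n}(-zq^{2n+2};q^2)_\i q^n}{\left(-q;q\right)_{n}}
&=-\frac{(-zq;q)_{\i}}{\left( -q;q\right)_{\i}}\nu\left(z,-z;-q\right)+2(-zq^2;q^2)_{\infty}\sum_{n=0}^{\infty}z^nq^{n^2+n}.
\end{align*}
Next, apply the simplification \eqref{simplification} with $\a=z$ (so that $-zq/\a=-q$), which gives
\begin{equation*}
\sum_{n=0}^{\i}\frac{(-zq;q)_{2n}(-zq^{2n+2};q^2)_\i q^n}{\left(-q;q\right)_{n}}=(-zq^2;q^2)_\i\sum_{n=0}^{\infty}\frac{(-zq;q^2)_nq^n}{(-q;q)_n}.
\end{equation*}
Equating these two expressions and dividing through by $(-zq^2;q^2)_\infty$, while using $(-zq;q)_{\i}/(-zq^2;q^2)_{\i}=(-zq;q^2)_{\i}$ together with $1/(-q;q)_{\i}=(q;q^2)_{\i}$, produces \eqref{6.3.5} directly. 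Then \eqref{10.4b} follows by substituting the functional relation \eqref{relation-z} — which expresses $\sum_{n=0}^\infty z^n q^{n^2+n}$ in terms of $\nu(z,-z;-q)$ and $\omega_1(z,-z;-q)$ — into \eqref{6.3.5} to eliminate $\nu(z,-z;-q)$; alternatively, one can subtract \eqref{6.3.5} from a suitable multiple of \eqref{a=zfunctionaleqn}, or simply use \eqref{a=zfunctionaleqn} itself to trade $\nu$ for $\omega_1$.

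I do not expect any genuine obstacle here: both identities are essentially bookkeeping consequences of machinery already assembled in the proof of Theorem \ref{a=zfunctional}. The only point requiring a little care is the correct manipulation of the $q$-Pochhammer symbols when passing between base $q$ and base $q^2$ (for instance verifying $(-zq;q)_{2n}/(-q;q)_n$ and the telescoping of the tail $(-zq^{2n+2};q^2)_\i$ against $(-zq^2;q^2)_\i$ in \eqref{simplification}), but this is exactly the identity already invoked at the start of the proof of Theorem \ref{theorem4}, so it may be quoted. The novelty claimed for \eqref{10.4b} is merely that this particular rearrangement does not seem to have been recorded before; its proof is immediate once \eqref{6.3.5} and \eqref{relation-z} are in hand.
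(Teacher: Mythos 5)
Your proof is correct and rests on exactly the same ingredients as the paper's: the $\alpha=z$ case of Theorem \ref{theorem4} together with the functional relation \eqref{relation-z}. The paper merely packages the bookkeeping slightly differently — it multiplies \eqref{a=zfunctionaleqn} by $(q;q^2)_{\infty}(-zq;q^2)_{\infty}$ and then subtracts \eqref{relation-z} once to get \eqref{10.4b} and twice to get \eqref{6.3.5} — whereas you equate \eqref{intermediateeqn} with \eqref{simplification} at $\alpha=z$ to reach \eqref{6.3.5} first and then trade $\nu$ for $\omega_1$ via \eqref{relation-z}; these are the same two linear combinations of the same two identities.
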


\begin{proof} Multiply both sides of \eqref{a=zfunctionaleqn} by $ (q;q^2)_{\infty}(-zq;q^2)_{\infty}$ to arrive at
\begin{gather}
(q;q^2)_{\infty}(-zq;q^2)_{\infty}\nu(z,-z;-q)+2q \omega_1(z,-z;-q)\notag\\
=(q;q^2)_{\infty}(-zq;q^2)_{\infty}\sum_{n=0}^{\i}\frac{(-q^{n+1};q)_\i q^n}{(-zq^{2n+1};q^2)_\i}.\label{10.1a}
\end{gather}
Now subtract \eqref{relation-z} from \eqref{10.1a} to deduce \eqref{10.4b}.  Multiply \eqref{relation-z} by 2 and subtract it from \eqref{10.1a} to deduce \eqref{6.3.5}.
\end{proof}


\begin{corollary}\label{a=zfunctionalcoro}  We have
\begin{align}\label{a=zfunctionalcoro1eqn}
\sum_{n=0}^{\i}\frac{(-q^{n+1};q)_\i q^n}{(-zq^{2n+1};q^2)_\i}&-2\notag
q\omega_1(z,-z;-q)\\&=\sum_{n=0}^{\i}\frac{(q;q^2)_n}{(-zq;q^2)_{n+1}}(-z^2)^n(1+zq^{4n+2})q^{3n^2+2n}.
\end{align}
\end{corollary}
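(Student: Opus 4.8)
The plan is to start from Theorem \ref{a=zfunctionaleqn}, which already expresses $\sum_{n\ge0}(-q^{n+1};q)_\infty q^n/(-zq^{2n+1};q^2)_\infty$ as $\nu(z,-z;-q)+2q\,\omega_1(z,-z;-q)$. Thus the quantity on the left-hand side of \eqref{a=zfunctionalcoro1eqn}, namely the infinite series minus $2q\,\omega_1(z,-z;-q)$, equals $\nu(z,-z;-q)$. So the corollary is equivalent to the single identity
\begin{equation*}
\nu(z,-z;-q)=\sum_{n=0}^{\infty}\frac{(q;q^2)_n}{(-zq;q^2)_{n+1}}(-z^2)^n(1+zq^{4n+2})q^{3n^2+2n}.
\end{equation*}
So the real content is a closed (partial-theta-like) evaluation of the specialization $\nu(z,-z;-q)$ of the trivariate $\nu$ from \eqref{1}. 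First I would write out $\nu(z,-z;-q)$ explicitly from the definition \eqref{1}: replacing $\alpha\mapsto z$, $z\mapsto -z$, $q\mapsto -q$ gives $\nu(z,-z;-q)=\sum_{n\ge0} z^n(-1)^nq^{n^2+n}/(zq;q^2)_{n+1}$ (taking care with the sign conventions in $(-(-z)(-q);(-q)^2)_{n+1}=(zq;q^2)_{n+1}$ and $(-q)^{n^2+n}=q^{n^2+n}$ since $n^2+n$ is even).

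The key step is then to recognize the right-hand side series as the result of a standard basic-hypergeometric manipulation applied to this representation. The shape $(q;q^2)_n/(-zq;q^2)_{n+1}$ together with the quadratic exponent $q^{3n^2+2n}$ and the factor $(-z^2)^n(1+zq^{4n+2})$ strongly suggests applying Andrews' deep identity \eqref{gea90_thm1} (with $q$ replaced by $q^2$, as was done for Theorems \ref{theorem3} and \ref{theorem4}) or, more likely, one of the partial-theta / $_1\psi_1$-type transformations already invoked in the paper. In particular I expect to use a transformation that converts a sum of the form $\sum_n t^n/(c;q^2)_{n+1}$ or $\sum_n (q;q^2)_n y^n$ (cf.\ the identity from \cite[p.~29, Exercise 6]{gea} used in \eqref{andrewsex6}, namely $\sum_m(-xq/y;q^2)_m y^m=\sum_m q^{m^2}x^m/(y;q^2)_{m+1}$) into a quadratic-exponent ``doubled'' partial theta series; the $(1+zq^{4n+2})$ factor is exactly the signature of combining the $2m$ and $2m+1$ terms of such a series, as in the pattern of \eqref{epnt1}, \eqref{epnt2}. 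Concretely, I would try to show both $\nu(z,-z;-q)$ and the target series equal a common bilateral/partial theta series by applying \eqref{gea90_thm1} with parameters chosen analogously to the proof of Theorem \ref{theorem4} (there $B^2=-zq$, $a=z/\alpha$, $b\to0$), specialized further at $\alpha=z$, and then splitting the resulting sum into even and odd index classes to produce the $(1+zq^{4n+2})q^{3n^2+2n}$ grouping.

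An alternative, possibly cleaner route: combine \eqref{6.3.5} and \eqref{10.4b} of Corollary \ref{sos}. Subtracting \eqref{6.3.5} from $2\times$\eqref{10.4b} eliminates $\sum_n\frac{(-zq;q^2)_nq^n}{(-q;q)_n}$ and yields $2q\,\omega_1(z,-z;-q)+\nu(z,-z;-q)$ times $(q;q^2)_\infty(-zq;q^2)_\infty$ in terms of $\sum_n z^nq^{n^2+n}$ — but that just recovers Theorem \ref{a=zfunctionaleqn}, so this does not by itself give the partial-theta form; it confirms that the new input needed is precisely the evaluation of $\sum_{n\ge0}\frac{(-zq;q^2)_nq^n}{(-q;q)_n}$ (the left side of \eqref{6.3.5}/\eqref{10.4b}) as a partial theta series, after which \eqref{10.4b} immediately gives the corollary. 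Indeed, from \eqref{10.4b}, $(q;q^2)_\infty(-zq;q^2)_\infty\,q\,\omega_1(z,-z;-q)=\sum_n\frac{(-zq;q^2)_nq^n}{(-q;q)_n}-\sum_n z^nq^{n^2+n}$, and combining with Theorem \ref{a=zfunctionaleqn} reduces everything to evaluating $\sum_n\frac{(-zq;q^2)_nq^n}{(-q;q)_n}$. So I would aim to prove
\begin{equation*}
\sum_{n=0}^{\infty}\frac{(-zq;q^2)_nq^n}{(-q;q)_n}=\sum_{n=0}^{\infty}\frac{(q;q^2)_n}{(-zq;q^2)_{n+1}}(-z^2)^n q^{3n^2+2n}(1+zq^{4n+2})\;-\;q\cdot(\text{correction}),
\end{equation*}
tracking constants carefully; the honest statement is that the corollary follows once $\nu(z,-z;-q)$ is identified with the stated series, and that identification is the crux.

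The main obstacle I anticipate is the bookkeeping of signs and $q$-Pochhammer base changes ($q\to q^2$, $q\to -q$) when specializing \eqref{gea90_thm1} at $\alpha=z$ and then splitting the index into residue classes mod $2$ to manufacture the $(1+zq^{4n+2})$ factor — getting the quadratic exponent to land exactly on $3n^2+2n$ rather than an off-by-linear-term variant. I would verify the final identity by matching the first two or three coefficients in $z$ (equivalently in $q$) as a sanity check, exactly as Andrews and Yee verified \eqref{pwnz}, \eqref{pnnz}.
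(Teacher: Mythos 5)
Your reduction is exactly the paper's: by Theorem \ref{a=zfunctional} the corollary is equivalent to the single identity
\begin{equation*}
\nu(z,-z;-q)=\sum_{n=0}^{\infty}\frac{(q;q^2)_n}{(-zq;q^2)_{n+1}}(-z^2)^n(1+zq^{4n+2})q^{3n^2+2n},
\end{equation*}
and you correctly say this is the crux. But you never prove it; you only speculate that it should follow from \eqref{gea90_thm1} together with an even/odd index split, or from "one of the partial-theta transformations already invoked," and your alternative route through Corollary \ref{sos} is, as you yourself note, circular. That leaves a genuine gap: the entire content of the corollary beyond the restatement of Theorem \ref{a=zfunctional} is this one evaluation, and the proposal stops at identifying it. The tool that actually closes it is not \eqref{gea90_thm1} but the Rogers--Fine identity: the paper equates Fine's two representations of $F(a,0;t)$, namely $(1-t)F(a,0;t)=\sum_{n\ge0}(-at)^nq^{n(n+1)/2}/(tq;q)_n$ and $(1-t)F(a,0;t)=\sum_{n\ge0}\frac{(aq;q)_n}{(tq;q)_n}(-at^2)^n(1-atq^{2n+1})q^{(3n^2+n)/2}$, with $q\to q^2$, $a=1/q$, $t=-zq$; the first becomes $\nu(z,-z;-q)$ and the second becomes the target series. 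No index-splitting is needed --- the factor $(1+zq^{4n+2})q^{3n^2+2n}$ is already present in the cited form of Rogers--Fine.

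Two smaller points. First, your explicit form of $\nu(z,-z;-q)$ has sign slips: from \eqref{1} one gets $\nu(z,-z;-q)=\sum_{n\ge0}z^nq^{n^2+n}/(-zq;q^2)_{n+1}$, since $-(-z)(-q)=-zq$ (not $zq$) and $(-q)^{n^2+n}=q^{n^2+n}$ contributes no $(-1)^n$; with your version the target identity would be false. Second, verifying a few coefficients is a reasonable sanity check but is not a substitute for supplying the Rogers--Fine step.
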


\begin{proof}
We appeal to two formulas for $F(a,0;t)$, where we are now using the notation $F(a,b;t)$ in Fine's book \cite{fine}.
First, from  \cite[p.~4, Equation (6.1)]{fine},
\begin{equation}\label{rogers-fine2}
(1-t)F(a,0;t)=\sum_{n=0}^{\infty}\df{(-at)^nq^{n(n+1)/2}}{(tq;q)_n}.
\end{equation}
Second, from \cite[p.~14, Equation (13.3)]{fine} (which is, in fact, a special case of the Rogers--Fine identity),
\begin{equation}\label{rogers-fine}
(1-t)F(a,0;t)=\sum_{n=0}^{\infty}\df{(aq;q)_n}{(tq;q)_n}(-at^2)^n(1-atq^{2n+1})q^{(3n^2+n)/2}.
\end{equation}
In  \eqref{rogers-fine} and \eqref{rogers-fine2}, first replace $q$ by  $q^2$.  Secondly, set  $a=1/q $, and  $t=-zq $.  Combining the two resulting identities,  we find that, respectively, by \eqref{1},
\begin{align}
F\left(\df{1}{q},0;-zq\right)&=\sum_{n=0}^{\infty}\df{z^nq^{n(n+1)}}{(-zq;q^2)_{n+1}}=\nu(z,-z;-q)\notag\\
&=\sum_{n=0}^{\i}\frac{(q;q^2)_n}{(-zq;q^2)_{n+1}}(-z^2)^n(1+zq^{4n+2})q^{3n^2+2n}.\label{rogers-fine4}
\end{align}
Now substitute the far-right side of \eqref{rogers-fine4} into the left-hand side of \eqref{a=zfunctionaleqn} to complete the proof of Corollary \ref{a=zfunctionalcoro}.
\end{proof}

Letting $z=0$ in Corollary \ref{a=zfunctionalcoro} gives
\begin{equation*}
\sum_{n=0}^{\infty}q^n(-q^{n+1};q)_{\infty}=1+2\sum_{n=0}^{\infty}\frac{q^{2n+1}}{(q;q^2)_{n+1}}.
\end{equation*}
This is a disguised form of Euler's theorem, which asserts that the number of partitions of a positive integer $n$ into distinct parts is equal to the number of partitions of $n$ into odd parts. To see this, separate the term for $n=0$ on the left side above, that is, $(-q;q)_{\infty}$,  and then add $1$ to both sides, so that
\begin{equation*}
(-q;q)_{\infty}+\left(1+\sum_{n=1}^{\infty}q^n(-q^{n+1};q)_{\infty}\right)=2\left(1+\sum_{n=0}^{\infty}\frac{q^{2n+1}}{(q;q^2)_{n+1}}\right).
\end{equation*}
Now,
 $$1+\sum_{n=1}^{\infty}q^n(-q^{n+1};q)_{\infty}=(-q;q)_{\infty},$$
 since the series on the left-hand side enumerates partitions into distinct parts with the smallest part being $n$. Next, $$1+\sum_{n=0}^{\infty}\frac{q^{2n+1}}{(q;q^2)_{n+1}}$$
  enumerates partitions into odd parts with $2n+1$  being the largest odd part.

\begin{corollary}\label{alpha=z=-1}
For $|q|<1,$
\begin{align}\label{alphaagain}
\sum_{n=0}^{\i}\frac{(-q^{n+1};q)_\i q^n}{(q^{2n+1};q^2)_\i} -2\sum_{n=0}^{\i}\frac{q^{2n+1}}{(q;q^2)^2_{n+1}} = \sum_{n=0}^{\i}(-1)^nq^{3n^2+2n}(1+q^{2n+1}).
\end{align}
\end{corollary}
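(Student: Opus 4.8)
The plan is to specialize Corollary \ref{a=zfunctionalcoro} at $z=-1$, so that $z$ and $\a$ in Theorem \ref{theorem4} are both set equal to $-1$. When we put $z=-1$ in \eqref{a=zfunctionalcoro1eqn}, the factor $(-z^2)^n=(-1)^n$ appears, the summand $(1+zq^{4n+2})$ becomes $(1-q^{4n+2})=(1-q^{2n+1})(1+q^{2n+1})$, and $(-zq;q^2)_{n+1}$ becomes $(q;q^2)_{n+1}$. The first task is thus to simplify the right-hand side: the factor $(1-q^{2n+1})$ in the numerator cancels one factor from $(q;q^2)_{n+1}$ in the denominator, leaving $(q;q^2)_n$ there, and the surviving partial-theta-type sum
\begin{equation*}
\sum_{n=0}^{\infty}\frac{(q;q^2)_n}{(q;q^2)_{n}}(-1)^n q^{3n^2+2n}(1+q^{2n+1})=\sum_{n=0}^{\infty}(-1)^nq^{3n^2+2n}(1+q^{2n+1})
\end{equation*}
collapses to the right-hand side of \eqref{alphaagain}. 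One must be slightly careful with the $n=0$ term, but since $(q;q^2)_0=1$ in both numerator and denominator the cancellation is uniform in $n\ge 0$.

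Next I would handle the left-hand side. Putting $z=-1$ directly into the first term of \eqref{a=zfunctionalcoro1eqn} gives $\sum_{n=0}^{\infty}(-q^{n+1};q)_\infty q^n/(q^{2n+1};q^2)_\infty$, which is precisely the first sum in \eqref{alphaagain}. For the remaining term I would use the expression for $\omega_1$ from its definition \eqref{2b}: with $\a=-1$, $z=-1$, and $q$ replaced by $-q$, we have
\begin{equation*}
\omega_1(-1,-1;-q)=\sum_{n=0}^{\infty}\frac{q^{2n}}{(q;q^2)_{n+1}(q;q^2)_{n+1}}=\sum_{n=0}^{\infty}\frac{q^{2n}}{(q;q^2)_{n+1}^2},
\end{equation*}
using $(-z(-q);(-q)^2)_{n+1}=(q;q^2)_{n+1}$ and $(z(-q)/\a;(-q)^2)_{n+1}=(q;q^2)_{n+1}$ when $z=\a=-1$. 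Hence $2q\,\omega_1(-1,-1;-q)=2\sum_{n=0}^{\infty}q^{2n+1}/(q;q^2)_{n+1}^2$, which is exactly the second sum on the left of \eqref{alphaagain}. Assembling these pieces turns \eqref{a=zfunctionalcoro1eqn} at $z=-1$ into \eqref{alphaagain}.

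The main obstacle is essentially bookkeeping rather than conceptual: one must verify that all Pochhammer symbols remain well defined (no vanishing denominators) when $z$ and $\a$ are specialized to $-1$ and $q\mapsto -q$, in particular that the analyticity hypothesis invoked at the end of the introduction is not violated at this specialization, and that the sign changes $q\mapsto -q$ in the various $q$-Pochhammer symbols are tracked consistently (for instance $(-q^{n+1};(-q))_\infty$ versus $(-q^{n+1};q)_\infty$ and $((-q)^{2n+1};(-q)^2)_\infty=(-q^{2n+1};q^2)_\infty=(q^{2n+1};q^2)_\infty$ only after one more sign flip, so I would double-check each factor). Once those checks are in place, the identity follows immediately by substitution, and no new summation or transformation formula beyond what is already proved is needed.
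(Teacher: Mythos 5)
Your proposal is exactly the paper's proof: Corollary \ref{alpha=z=-1} is obtained by putting $z=-1$ in \eqref{a=zfunctionalcoro1eqn}, and your simplification of the right-hand side (cancelling $1-q^{2n+1}$ against $(q;q^2)_{n+1}$) is the intended one. One small slip: the corollary features $\omega_1(z,-z;-q)$, so at $z=-1$ the relevant function is $\omega_1(-1,\,+1;-q)$, not $\omega_1(-1,-1;-q)$ as you wrote (indeed $-z\cdot(-q)=-q$ when $z=-1$, so your stated Pochhammer identity would give $(-q;q^2)_{n+1}$); the series $\sum_{n\ge 0}q^{2n}/(q;q^2)_{n+1}^2$ you arrive at is nevertheless the correct value of $\omega_1(-1,1;-q)$, so the argument stands.
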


\begin{proof}
Put $z=-1$ in \eqref{a=zfunctionalcoro1eqn}.
\end{proof}

We next define three partition functions.

The number of \emph{overpartitions} of a positive integer $n$ is equal to the number of partitions of $n$ arranged in decreasing order, where the first appearance of a part may or may not be overlined.  Thus, $3, \overline{3}, 2+1, \overline{2}+1, 2+\overline{1}, \overline{2}+\overline{1}, 1+1+1,$ and $\overline{1}+1+1$ are the eight overpartitions of 3.

Let $p^*(n)$ denote the the number of overpartitions of $n$ into \emph{distinct} non-negative parts, such that all odd parts less than twice the smallest part are overlined and such that all of the even parts are  overlined.

Let $p_*(n)$ denote the number of weighted partitions of $n$ into odd parts, where, if $\l_1,\l_2,\cdots,\l_k$ are the distinct parts of the partition with $\l_k>\l_{k-1}>\cdots>\l_2>\l_1$, then the weight of the partition is equal to $\prod_{i=1}^{k}w(\l_i)$, where
\begin{equation}\label{star}
w(\l_i)=\begin{cases}
1+\text{the number of appearances of } \lambda_i,\quad &\text{if}\hspace{1mm} 1\leq i<k,\\
\text{the number of appearances of } \lambda_i, &\text{if}\hspace{1mm} i=k.
\end{cases}
\end{equation}

\begin{example}\label{p666}
Let $n=5$. We first find $p^*(5)$. Note that $0$ and $\overline{0}$ may also be parts of the overpartitions enumerated by $p^*(5)$. Hence the admissible partitions are $\overline{5}, 5+\overline{0}, \overline{5}+\overline{0}, \overline{4}+\overline{1}, \overline{4}+1+\overline{0}, \overline{4}+\overline{1}+\overline{0}, \overline{3}+\overline{2}, 3+\overline{2}+\overline{0}, \overline{3}+\overline{2}+\overline{0}, 3+1+1+\overline{0}, \overline{3}+1+1+\overline{0}, 3+\overline{1}+1+\overline{0}, \overline{3}+\overline{1}+1+\overline{0}, \overline{2}+1+1+1+\overline{0}, \overline{2}+\overline{1}+1+1+\overline{0}, 1+1+1+1+1+\overline{0}, \overline{1}+1+1+1+1+\overline{0}$. According to its definition,
\begin{equation*}
p^*(5)=17.
\end{equation*}
Next, we calculate $p_*(5)$. The admissible partitions are $5, 3+1+1$ and $1+1+1+1+1$. Hence
\begin{equation*}
p_*(5)=1+(1)(3)+5=9.
\end{equation*}
\end{example}
Corollary \ref{alpha=z=-1} gives an interesting identity involving $p^{*}(n)$ and $p_{*}(n)$, which  is in the spirit of Euler's pentagonal number theorem.

\begin{theorem}\label{pntanalogue}
Let $p^*(n)$ and $p_*(n)$ be defined as above. For $n \geq 1$,
\begin{align*}
p^*(n) -2 p_*(n) =
\left\{
	\begin{array}{ll}
		(-1)^j,  & \mbox{if } n = 3j^2+2j\,\, or \,\, 3j^2+4j+1, \\
		0, & \mbox{otherwise. }
	\end{array}
\right.
\end{align*}
\end{theorem}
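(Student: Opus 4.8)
The plan is to read off both sides of Corollary~\ref{alpha=z=-1} as generating functions of the two partition statistics, and then equate coefficients of $q^n$. First I would show that the series on the far right of \eqref{alphaagain}, namely $\sum_{n\geq 0}(-1)^nq^{3n^2+2n}(1+q^{2n+1})$, contributes exactly the stated right-hand side: the term $q^{3j^2+2j}$ gives the contribution at $n=3j^2+2j$ and the term $q^{3j^2+2j+2j+1}=q^{3j^2+4j+1}$ gives the contribution at $n=3j^2+4j+1$, each with sign $(-1)^j$, and these exponents are all distinct for $j\geq 0$, so no cancellation occurs. Hence it suffices to prove that
\[
\sum_{n=0}^{\infty}\frac{(-q^{n+1};q)_\infty\, q^n}{(q^{2n+1};q^2)_\infty}=\sum_{n=0}^{\infty}p^{*}(n)q^n
\qquad\text{and}\qquad
\sum_{n=0}^{\infty}\frac{q^{2n+1}}{(q;q^2)^2_{n+1}}=\sum_{n=0}^{\infty}p_{*}(n)q^n,
\]
whence Theorem~\ref{pntanalogue} follows by comparing coefficients in \eqref{alphaagain}; note the constant terms match since $p^{*}(0)=p_{*}(0)=0$ is not asserted — rather one checks the $n=0$ term separately, the left side of \eqref{alphaagain} having constant term $1$ which is accounted for by the empty overpartition (here one must be slightly careful about whether $\overline 0$ alone counts, matching the convention in Example~\ref{p666}).

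For the weighted partition identity, I would expand $1/(q;q^2)^2_{n+1}=\big(\sum_{k\geq 0}\binom{k+?}{?}_{q}\cdots\big)$ — more precisely, $1/(q;q^2)_{n+1}=\sum_{\lambda}q^{|\lambda|}$ over partitions $\lambda$ into odd parts each at most $2n+1$, and squaring accounts for the weight $1+(\text{multiplicity})$ on the largest part through the standard identity $1/(1-x)^2=\sum_{m\geq 0}(m+1)x^m$ applied to the part $2n+1$. The extra factor $q^{2n+1}$ forces the part $2n+1$ to appear at least once, which is exactly the condition that $2n+1$ \emph{is} the largest part and carries weight equal to its multiplicity (rather than multiplicity $+1$). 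Summing over $n\geq 0$ then enumerates all partitions into odd parts with the prescribed weights, giving $\sum p_{*}(n)q^n$; I would double-check this against $p_{*}(5)=9$ from Example~\ref{p666}.

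For the overpartition identity, the summand $q^n(-q^{n+1};q)_\infty/(q^{2n+1};q^2)_\infty$ should be interpreted with $n$ as the smallest part: $q^n$ contributes the smallest part $n$; the factor $(-q^{n+1};q)_\infty=\prod_{j>n}(1+q^j)$ contributes an arbitrary set of \emph{distinct} overlined parts strictly larger than $n$; and $1/(q^{2n+1};q^2)_\infty=\prod_{j\geq n}1/(1-q^{2j+1})$ contributes odd parts $\geq 2n+1$, i.e.\ odd parts that are at least twice the smallest part, with arbitrary (non-overlined) multiplicities. This matches the definition of $p^{*}(n)$: distinct non-negative parts, all even parts overlined (the even parts live in the $(-q^{n+1};q)_\infty$ product and so are automatically distinct and overlined), all odd parts \emph{less than} twice the smallest overlined (those are the odd parts $>n$ but $<2n$ sitting in $(-q^{n+1};q)_\infty$), while odd parts $\geq 2n$ are unrestricted in multiplicity and not overlined (the $(q^{2n+1};q^2)_\infty$ factor). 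One subtlety: the smallest part $n$ itself — whether it is overlined and whether $n$ is even or odd — and the role of $0$ and $\overline 0$, must be reconciled with the bookkeeping; I would handle $n=0$ (parts $0,\overline 0$) as in Example~\ref{p666} and note the first appearance of $n$ from the $q^n$ factor is not overlined while its repetitions, if $n$ is odd and $\geq 2n$ is impossible so $n$ does not repeat unless $n=0$, are absorbed correctly.

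The main obstacle I expect is not any deep manipulation but rather pinning down the combinatorial dictionary so that \emph{every} overpartition counted by $p^{*}(n)$ is produced exactly once by the product expansion, with the overlining/non-overlining and the non-negativity ($0$, $\overline 0$) conventions exactly matching those in the definition and in Example~\ref{p666}; in particular one must verify $\sum_{n=0}^{5}$-level agreement (the example gives $p^{*}(5)=17$, $p_{*}(5)=9$, and indeed $17-2\cdot 9=-1=(-1)^1$ with $5=3\cdot 1^2+2\cdot 1$) to be confident the bijection is stated correctly. Once the two generating-function identities are established, the theorem is immediate from Corollary~\ref{alpha=z=-1} by extracting the coefficient of $q^n$ for $n\geq 1$.
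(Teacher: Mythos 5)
Your proposal is correct and follows essentially the same route as the paper: interpret the two series on the left of Corollary \ref{alpha=z=-1} as the generating functions of $p^{*}$ and $p_{*}$ (the latter exactly via $1/(1-x)^2=\sum_{m\ge0}(m+1)x^m$ applied part by part, with $q^{2n+1}/(1-q^{2n+1})^2=\sum_{k\ge1}kq^{(2n+1)k}$ handling the largest part), note the exponents $3j^2+2j$ and $3j^2+4j+1$ are pairwise distinct, and equate coefficients of $q^n$. The one detail to fix in your dictionary is the smallest part: since it is either even or odd and less than twice itself, the definition of $p^{*}$ forces it to be overlined (cf.\ $\overline{5}$ and $\overline{0}$ in Example \ref{p666}), so the $q^n$ factor contributes an overlined smallest part rather than an un-overlined one — a forced relabeling that does not change the count; the paper smooths this bookkeeping by first factoring $(-q^{n+1};q)_{\infty}=(-q^{n+1};q)_{n}(-q^{2n+2};q^2)_{\infty}(-q^{2n+1};q^2)_{\infty}$ so that the first two factors together with $q^n$ reproduce the $p_{\nu}$-type summand, all of whose parts are overlined, while $(-q^{2n+1};q^2)_{\infty}/(q^{2n+1};q^2)_{\infty}$ supplies the optionally overlined odd parts $\ge 2n+1$.
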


\begin{proof}
We first show that
\begin{align}\label{p^*n}
\sum_{m=0}^{\infty}p^*(m)q^m =\sum_{n=0}^{\i}\frac{(-q^{n+1};q)_\i q^n}{(q^{2n+1};q^2)_\i}.
\end{align}
Rewrite the right-hand side of \eqref{p^*n} in the form
\begin{equation}\label{notag6}
\sum_{n=0}^{\i}q^n(-q^{n+1};q)_n(-q^{2n+2};q^2)_\i\frac{(-q^{2n+1};q^2)_\i }{(q^{2n+1};q^2)_\i}.
\end{equation}
Let $n$ denote the smallest part in an overpartition of a positive integer $m$. First,
\begin{equation}\label{notag4}
q^n(-q^{n+1};q)_n(-q^{2n+2};q^2)_\i
 \end{equation}
 generates partitions, wherein the parts are distinct and each odd part is less than twice the smallest part. Since these parts are distinct, we can overline each of the elements in any given partition.  Secondly,
\begin{equation}\label{notag5}
\frac{(-q^{2n+1};q^2)_\i }{(q^{2n+1};q^2)_\i}
\end{equation}
generates partitions into odd parts, which may or may not be overlined.
Taking the interpretations of \eqref{notag4} and \eqref{notag5} together in \eqref{notag6}, we see that we have generated the overpartitions enumerated by $p^{*}(m)$ from the right-hand side of  \eqref{p^*n}, i.e., we have concluded our proof of \eqref{p^*n}.

Next, consider the second series on the left-hand side of \eqref{alphaagain}.  
 Clearly, $\displaystyle\frac{q^{2n+1}}{(q;q^2)^2_{n+1}}$ generates partitions into odd parts with the largest part $2n+1$. Write
\begin{align*}
&\frac{q^{2n+1}}{(q;q^2)^2_{n+1}}
=\frac{1}{(1-q)^2}\cdot\frac{1}{(1-q^3)^2}\cdots\frac{1}{(1-q^{2n-1})^2}\cdot\frac{q^{2n+1}}{(1-q^{2n+1})^2}\nonumber\\
&=\sum_{k_1,k_2,\cdots,k_{n}=0}^{\infty}(k_1+1)(k_2+1)\cdots(k_{n}+1)q^{1\cdot k_1+3.k_2+\cdots+(2n-1)\cdot k_n}\sum_{k_{n+1}=1}^{\infty}k_{n+1}q^{(2n+1)\cdot k_{n+1}}.
\end{align*}
Recalling the definition of $p_{*}(n)$ in \eqref{star}, we see that
\begin{equation}\label{p_*n}
\sum_{m=0}^{\infty}p_*(m)q^m=\sum_{n=0}^{\i}\frac{q^{2n+1}}{(q;q^2)^2_{n+1}}.
\end{equation}
Theorem \ref{pntanalogue} now follows from \eqref{p^*n} and \eqref{p_*n} by equating the coefficient of $q^n$ on both sides of Corollary \ref{alpha=z=-1}.
\end{proof}
\begin{example}
Let $n=5$. From Example \ref{p666},  $p^*(5)=17$ and $p_*(5)=9$, and so $p^*(5)-2p_*(5)=-1$ as predicted by Theorem \textup{\ref{pntanalogue}}, since $5$ is of the form $3j^2+2j$, precisely when $j=1$.

 Next, let $n=6$. Note that $6$ cannot be written in the form $3j^2+4j+1$ or $3j^2+2j$. After counting all of the relevant partitions, we find that $p^*(6)=28$ and $p_*(6)=14$. Thus, $p^*(6)-2p_*(6)=0$ as predicted by Theorem \textup{\ref{pntanalogue}}.
\end{example}

Theorem \ref{pntanalogue} readily gives the following corollary determining the parity of $p^*(n)$.

\begin{corollary}
For $n \geq 1$,
\begin{align*}
p^*(n)  \equiv
\left\{
	\begin{array}{ll}
		1~ (\textup{mod} ~2), & \mbox{if } n = 3j^2+2j~or~ 3j^2+4j+1, \\
		0 ~ (\textup{mod} ~2), & \mbox{otherwise. }
	\end{array}
\right.
\end{align*}
\end{corollary}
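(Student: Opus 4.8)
The plan is to simply reduce the identity of Theorem \ref{pntanalogue} modulo $2$. That theorem already gives the exact value of $p^*(n) - 2p_*(n)$ for every $n \geq 1$, so all the work is done; what remains is an elementary observation about parities.

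First I would note that for any integer $n \geq 1$, the quantity $2p_*(n)$ is even, so $p^*(n) \equiv p^*(n) - 2p_*(n) \pmod 2$. Next, by Theorem \ref{pntanalogue}, the right-hand side $p^*(n) - 2p_*(n)$ equals $(-1)^j$ when $n = 3j^2 + 2j$ or $n = 3j^2 + 4j + 1$ for some nonnegative integer $j$, and equals $0$ otherwise. Since $(-1)^j \equiv 1 \pmod 2$ regardless of the value of $j$, the first case yields $p^*(n) \equiv 1 \pmod 2$, while in the remaining case $p^*(n) \equiv 0 \pmod 2$. This is exactly the assertion of the corollary.

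The only point worth a sentence of care is that the two families $n = 3j^2 + 2j$ and $n = 3j^2 + 4j + 1$ are disjoint and that each value of $n$ arises from at most one $j$ in at most one family — but this is not actually needed for the parity statement, since in all "special" cases the value is $\pm 1$, hence odd. So there is no genuine obstacle here; the corollary is an immediate consequence of Theorem \ref{pntanalogue}, and the proof is a two-line reduction.
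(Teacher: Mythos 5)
Your proof is correct and matches the paper's approach exactly: the paper states that the corollary "readily" follows from Theorem \ref{pntanalogue}, and the intended argument is precisely your two-line reduction modulo $2$, using that $2p_*(n)$ is even and $(-1)^j$ is odd.
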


\subsection{The case $z=-1/q$ of Theorem \ref{theorem4}}

Setting $z=-1/q$ in Theorem \ref{theorem4}, we deduce the following corollary. See also Kang's paper \cite[Corollary 7.4]{kang}.
\begin{corollary}
\begin{align*}
\sum_{n=0}^{\i}\frac{\a^nq^{n(n+1)/2}}{(-\a q;q)_{n+1}} = 1.
\end{align*}
\end{corollary}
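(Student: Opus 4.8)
The plan is to obtain this as the special case $z=-1/q$ of Theorem~\ref{theorem4}, by substituting $z=-1/q$ directly into \eqref{gsayeqn1} and simplifying the three series that appear.

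First I would simplify the left-hand side. Since $z=-1/q$ makes $-zq=1$, we have $(-zq;q)_{2n}=(1;q)_{2n}$, whose leading factor is $1-q^0=0$; hence every term with $n\geq 1$ vanishes and only $n=0$ survives, contributing $(-zq^2;q^2)_\infty=(q;q^2)_\infty$. So the left side reduces to $(q;q^2)_\infty$.

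Next I would handle the right-hand side. In the first term the factor $(-zq;q)_\infty$ becomes $(1;q)_\infty=0$, while $\nu(\alpha^2/z,-\alpha^2/z;-q)=\nu(-\alpha^2 q,\alpha^2 q;-q)$ stays finite under the standing analyticity hypothesis (or one may pass to the limit $z\to-1/q$), so this term disappears. For the second term I would use $-zq^2=q$, $-\alpha/z=\alpha q$, and $-\alpha^2 q/z=\alpha^2 q^2$ to rewrite it as $(q;q^2)_\infty\sum_{n=0}^{\infty}(\alpha q;q)_{n+1}\,\alpha^n q^{n(n+1)/2}/(\alpha^2 q^2;q^2)_{n+1}$, and then invoke the elementary identity $(a^2;q^2)_m=(a;q)_m(-a;q)_m$ with $a=\alpha q$, which gives $(\alpha^2 q^2;q^2)_{n+1}=(\alpha q;q)_{n+1}(-\alpha q;q)_{n+1}$; thus the quotient collapses to $1/(-\alpha q;q)_{n+1}$.

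Equating the simplified sides yields
\begin{equation*}
(q;q^2)_\infty=(q;q^2)_\infty\sum_{n=0}^{\infty}\frac{\alpha^n q^{n(n+1)/2}}{(-\alpha q;q)_{n+1}},
\end{equation*}
and dividing by $(q;q^2)_\infty\neq0$ (valid for $|q|<1$) gives the claim. There is no real obstacle; the only point needing a word of care is the vanishing of the $0\cdot\nu$ expression, which is legitimate because the $\nu$-factor is finite (alternatively, by the limiting argument just mentioned).
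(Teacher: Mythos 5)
Your proposal is correct and is precisely the paper's proof: the authors simply state that the corollary follows by setting $z=-1/q$ in Theorem \ref{theorem4}, and your computation supplies exactly the details they omit (vanishing of the left side except $n=0$, vanishing of the $\nu$-term via $(1;q)_\infty=0$, and the collapse $(\a^2q^2;q^2)_{n+1}=(\a q;q)_{n+1}(-\a q;q)_{n+1}$).
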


\subsection{The case $\a=-z$ of Theorem \ref{theorem4} and its corollaries}

\begin{corollary}\label{alpha=-z} We have
\begin{align}\label{alpha=-zeqn}
\nu(z,-z;-q)=\sum_{n=0}^{\i}\frac{(q^{n+1};q)_\i}{(-zq^{2n+1};q^2)_\i}q^n.
\end{align}
\end{corollary}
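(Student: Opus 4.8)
The plan is to specialize Theorem \ref{theorem4}, i.e.\ equation \eqref{gsayeqn1}, by setting $\a=-z$. When $\a=-z$, the factor $(-zq/\a;q)_n = (q;q)_n$ on the left-hand side, so the left-hand side of \eqref{gsayeqn1} collapses to $\sum_{n\ge0}\frac{(-zq;q)_{2n}(-zq^{2n+2};q^2)_\i}{(q;q)_n}q^n$. Using the same identity $(-zq;q)_{2n}(-zq^{2n+2};q^2)_\i = (-zq;q^2)_\i\cdot\text{(something)}$ that produced \eqref{simplification} — more precisely $(-zq;q)_{2n}(-zq^{2n+2};q^2)_\i = (-zq^2;q^2)_\i(-zq;q^2)_n$ — the left-hand side becomes $(-zq^2;q^2)_\i\sum_{n\ge0}\frac{(-zq;q^2)_n}{(q;q)_n}q^n$. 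On the right-hand side, the prefactor $-\frac{\a}{z}\frac{(-zq;q)_\i}{(-zq/\a;q)_\i}$ becomes $\frac{(-zq;q)_\i}{(q;q)_\i}$, and $\nu(\a^2/z,-\a^2/z;-q)$ becomes $\nu(z,-z;-q)$ (since $\a^2/z = z$ when $\a=-z$). The second term on the right of \eqref{gsayeqn1}, namely $(-zq^2;q^2)_\i\sum_{n\ge0}\frac{(-\a/z;q)_{n+1}\a^n q^{n(n+1)/2}}{(-\a^2q/z;q^2)_{n+1}}$, becomes $(-zq^2;q^2)_\i\sum_{n\ge0}\frac{(1;q)_{n+1}(-z)^n q^{n(n+1)/2}}{(-zq;q^2)_{n+1}}$, and since $(1;q)_{n+1}=0$ for all $n\ge0$, this entire term vanishes.

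After these substitutions \eqref{gsayeqn1} reads
\[
(-zq^2;q^2)_\i\sum_{n=0}^{\i}\frac{(-zq;q^2)_n}{(q;q)_n}q^n = \frac{(-zq;q)_\i}{(q;q)_\i}\,\nu(z,-z;-q).
\]
So it remains to show the left-hand side above equals $\frac{(-zq;q)_\i}{(q;q)_\i}\sum_{n\ge0}\frac{(q^{n+1};q)_\i}{(-zq^{2n+1};q^2)_\i}q^n$. Rearranging, this is equivalent to
\[
\sum_{n=0}^{\i}\frac{(-zq;q^2)_n}{(q;q)_n}q^n = \frac{(-zq;q)_\i}{(-zq^2;q^2)_\i(q;q)_\i}\sum_{n=0}^{\i}\frac{(q^{n+1};q)_\i}{(-zq^{2n+1};q^2)_\i}q^n = \frac{(-zq;q^2)_\i}{(q;q)_\i}\sum_{n=0}^{\i}\frac{(q^{n+1};q)_\i}{(-zq^{2n+1};q^2)_\i}q^n,
\]
using $(-zq;q)_\i=(-zq;q^2)_\i(-zq^2;q^2)_\i$. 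The right-hand side simplifies: $\frac{(-zq;q^2)_\i}{(-zq^{2n+1};q^2)_\i}=(-zq;q^2)_n$ and $\frac{(q^{n+1};q)_\i}{(q;q)_\i}=\frac{1}{(q;q)_n}$, so the right-hand side is exactly $\sum_{n\ge0}\frac{(-zq;q^2)_n}{(q;q)_n}q^n$, matching the left-hand side identically. Thus the claimed identity \eqref{alpha=-zeqn} is an immediate consequence of \eqref{gsayeqn1} once all the products are reconciled.

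I expect the only genuine subtlety to be bookkeeping with the $q$-Pochhammer manipulations — in particular confirming $(-zq;q)_{2n}(-zq^{2n+2};q^2)_\i = (-zq^2;q^2)_\i(-zq;q^2)_n$ (which is the split of $(-zq;q)_\i$ into odd- and even-indexed factors, already used implicitly in \eqref{simplification}), and verifying that the vanishing of $(1;q)_{n+1}$ is legitimate at the level of the identity rather than only formally. Neither step presents a real obstacle; the main work is careful cancellation. An alternative, even shorter route is to combine the $\a=-z$ specialization directly with the rewriting $\frac{(q^{n+1};q)_\i}{(-zq^{2n+1};q^2)_\i}q^n = q^n(q^{n+1};q)_n(q^{2n+2};q^2)_\i\cdot\frac{(-zq^{2n+1};q^2)_\i^{-1}(\cdots)}{\cdots}$ as in the proof of Corollary \ref{gsayspl}, but the direct comparison of the two power series above is cleaner.
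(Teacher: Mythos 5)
Your proposal is correct and follows exactly the paper's route: set $\a=-z$ in Theorem \ref{theorem4}, observe that the factor $(-\a/z;q)_{n+1}=(1;q)_{n+1}$ kills the second series on the right, and reconcile the remaining infinite products (the paper compresses this into ``follows readily after some simplification,'' which you have carried out correctly). The vanishing of $(1;q)_{n+1}$ is an honest identical zero for every $n\ge 0$, so the subtlety you flag does not arise.
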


\begin{proof}
Let $\a= -z$ in Theorem \ref{theorem4} and note that the series on the extreme right-hand side vanishes. Corollary \ref{alpha=-z} now follows readily after some simplification.
\end{proof}

\begin{corollary}
We have
\begin{equation*}
\omega(z,-z;-q)=\frac{1}{2q}\sum_{n=0}^{\i}\frac{\left\{(-q^{n+1};q)_\i-(q^{n+1};q)_\i\right\} q^n}{(-zq^{2n+1};q^2)_\i}.
\end{equation*}
\end{corollary}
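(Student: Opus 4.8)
The plan is to obtain the formula by eliminating $\nu(z,-z;-q)$ between two identities already at our disposal: Theorem \ref{a=zfunctional} and Corollary \ref{alpha=-z}. First I would record, from \eqref{a=zfunctionaleqn},
\begin{equation*}
\nu(z,-z;-q)+2q\,\omega_1(z,-z;-q)=\sum_{n=0}^{\i}\frac{(-q^{n+1};q)_\i\,q^n}{(-zq^{2n+1};q^2)_\i},
\end{equation*}
and, from \eqref{alpha=-zeqn},
\begin{equation*}
\nu(z,-z;-q)=\sum_{n=0}^{\i}\frac{(q^{n+1};q)_\i\,q^n}{(-zq^{2n+1};q^2)_\i}.
\end{equation*}
Subtracting the second identity from the first cancels the $\nu$-term on the left. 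On the right, both series carry the identical factor $1/(-zq^{2n+1};q^2)_\i$, so after combining them termwise (permissible under the standing analyticity hypothesis, which guarantees absolute convergence) one arrives at
\begin{equation*}
2q\,\omega_1(z,-z;-q)=\sum_{n=0}^{\i}\frac{\{(-q^{n+1};q)_\i-(q^{n+1};q)_\i\}\,q^n}{(-zq^{2n+1};q^2)_\i}.
\end{equation*}
Dividing by $2q$ yields exactly the asserted identity, with $\omega(z,-z;-q)$ read as $\omega_1(z,-z;-q)$, matching the notation of Theorem \ref{a=zfunctional}.

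Since every ingredient has already been established, there is no genuine obstacle: the corollary is a purely formal consequence of the two cited results. The only matters that warrant a word of care are bookkeeping ones, namely confirming that $\alpha=z$ (used in Theorem \ref{a=zfunctional}) and $\alpha=-z$ (used in Corollary \ref{alpha=-z}) are both admissible specializations of Theorem \ref{theorem4} for the same $z$ and $q$, and that the termwise subtraction of the two series is legitimate; both follow from the blanket assumption that all series in question converge absolutely for the admissible parameter values.
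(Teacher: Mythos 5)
Your proposal is correct and is exactly the paper's argument: the paper's proof reads simply ``Use Corollary \ref{alpha=-z} in Theorem \ref{a=zfunctional},'' i.e.\ subtract \eqref{alpha=-zeqn} from \eqref{a=zfunctionaleqn} and divide by $2q$. You are also right that $\omega(z,-z;-q)$ in the statement should be read as $\omega_1(z,-z;-q)$.
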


\begin{proof}
Use Corollary \textup{\ref{alpha=-z}} in Theorem \textup{\ref{a=zfunctional}}.
\end{proof}

\begin{corollary}\label{aplha=-z=-q} We have
\begin{align}\label{aplha=-z=-qeqn}
\sum_{n=1}^{\i}\frac{q^n}{(-q^{n};q)_\i}&=1-(q;q^2)_\i,
\end{align}
\end{corollary}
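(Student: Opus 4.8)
The plan is to obtain \eqref{aplha=-z=-qeqn} as the special case $z=-1$ of Corollary \ref{alpha=-z}, which states that
\[
\nu(z,-z;-q)=\sum_{n=0}^{\infty}\frac{(q^{n+1};q)_{\infty}}{(-zq^{2n+1};q^2)_{\infty}}q^n .
\]
First I would set $z=-1$ on the right-hand side: the summand becomes $(q^{n+1};q)_{\infty}/(q^{2n+1};q^2)_{\infty}\cdot q^n$, and since $(q^{n+1};q)_{\infty}=(q^{n+1};q^2)_{\infty}(q^{n+2};q^2)_{\infty}$, for $n\ge 1$ this factors so that one obtains $(q^{n+2};q^2)_{\infty}q^n$ or, after re-indexing the infinite product, something of the shape $(-q^{n};q)_{\infty}^{-1}$ up to the finitely many small factors; I expect the telescoping identity $(q^{n+1};q)_{\infty}/(q^{2n+1};q^2)_{\infty}=(q^{n+1};q^2)_{\infty}$ to be the cleanest route, giving $\sum_{n\ge 0}(q^{n+1};q^2)_{\infty}q^n$, which I would then recognize. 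Meanwhile, the left-hand side $\nu(-1,1;-q)$ should be evaluated directly from the definition \eqref{1}: $\nu(-1,1;-q)=\sum_{n=0}^{\infty}(-1)^n(-q)^{n^2+n}/((q;q^2)_{n+1})=\sum_{n=0}^{\infty}(-1)^nq^{n^2+n}/(q;q^2)_{n+1}$ after noting $n^2+n$ is even, and this is a known evaluation.

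The key step is thus to identify $\nu(-1,1;-q)$ in closed form. I would use the representation of $\nu(\a,z;q)$ coming from Theorem \ref{newrelation}, or more directly the Rogers--Fine type expansion \eqref{rogers-fine4} used in the proof of Corollary \ref{a=zfunctionalcoro}, specialized at $z=-1$: there $\nu(z,-z;-q)=F(1/q,0;-zq)$, so $\nu(-1,1;-q)=F(1/q,0;q)$, and Fine's $F(a,0;t)$ has the evaluation $F(a,0;t)=\sum_{n\ge0}(aq;q)_n(-t)^n/((tq;q)_n)$ type formulas already cited. Alternatively, and probably most efficiently, I would combine \eqref{6.3.5} (Ramanujan's entry $6.3.5$) at $z=-1$ with the $z=-1$ case of Theorem \ref{a=zfunctional}: \eqref{6.3.5} reads $\sum_{n\ge0}(q;q^2)_n q^n/(-q;q)_n=2\sum_{n\ge0}(-1)^nq^{n^2+n}-(q;q^2)_{\infty}^2\,\nu(-1,1;-q)$, which lets me express $\nu(-1,1;-q)$ via elementary series. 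The target identity $\sum_{n\ge1}q^n/(-q^n;q)_{\infty}=1-(q;q^2)_{\infty}$ then drops out after multiplying through and using $(q;q^2)_{\infty}(-q;q)_{\infty}=1$ to clean the infinite products on the right-hand side of the specialized Corollary \ref{alpha=-z}.

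The main obstacle will be the bookkeeping in matching the $q$-product factors after the substitution $z=-1$: the singular-looking factors $(q;q)_{\infty}$ and $(q^{2n+1};q^2)_{\infty}$ must be handled so that each summand is an entire function of $q$, and the re-indexing that turns $\sum_{n\ge 0}(q^{n+1};q^2)_{\infty}q^n$ into $\sum_{n\ge1}q^n/(-q^n;q)_{\infty}+\big(1-(q;q^2)_{\infty}\big)$ requires care about whether the $n=0$ term is separated. Concretely I would peel off $n=0$ on both sides, show the $n=0$ contribution on the right accounts for the ``$1-(q;q^2)_{\infty}$'' constant (using $(q;q^2)_{\infty}\cdot$[something]$=1-(q;q^2)_{\infty}$ only after the series evaluation of $\nu$), and then verify the $n\ge 1$ tails agree term by term via $(q^{n+1};q)_{\infty}/(q^{2n+1};q^2)_{\infty}=1/(-q^{n};q)_{\infty}\cdot(\text{unit})$; once this product bookkeeping is pinned down the rest is routine cancellation.
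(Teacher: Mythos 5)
There is a genuine gap: you have chosen the wrong specialization of Corollary \ref{alpha=-z}. The paper sets $z=-q$ (not $z=-1$). With $z=-q$ the summand of \eqref{alpha=-zeqn} becomes $(q^{n+1};q)_\infty q^n/(q^{2n+2};q^2)_\infty = q^n/(-q^{n+1};q)_\infty$, by the elementary identity $(a;q)_\infty(-a;q)_\infty=(a^2;q^2)_\infty$; shifting $n\mapsto n-1$ this is exactly $\tfrac1q\sum_{n\ge1}q^n/(-q^n;q)_\infty$, i.e.\ the left side of \eqref{aplha=-z=-qeqn} up to a factor of $q$. With your choice $z=-1$ the summand is $(q^{n+1};q)_\infty q^n/(q^{2n+1};q^2)_\infty$, and the ``telescoping identity'' you propose, $(q^{n+1};q)_\infty/(q^{2n+1};q^2)_\infty=(q^{n+1};q^2)_\infty$, is false for $n\ge2$: e.g.\ for $n=2$ the left side is $(1-q^3)(q^4;q^2)_\infty$ while the right side is $(1-q^3)(1-q^5)(1-q^7)\cdots$. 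So the $z=-1$ route does not produce the target sum at all.

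The second half of your plan also does not close. With $z=-1$ you would need a closed-form evaluation of $\nu(-1,1;-q)=\sum_{n\ge0}(-1)^nq^{n^2+n}/(q;q^2)_{n+1}$; this is a genuine mock-theta-type series and is not ``a known evaluation'' with a product form, and neither \eqref{6.3.5} nor the Rogers--Fine expansion \eqref{rogers-fine4} converts it into one (they only re-express it as other series). By contrast, at $z=-q$ the relevant value is
\begin{equation*}
\nu(-q,q;-q)=\sum_{n=0}^{\infty}\frac{(-1)^nq^{n^2+2n}}{(q^2;q^2)_{n+1}},\qquad\text{so}\qquad -q\,\nu(-q,q;-q)=\sum_{n=1}^{\infty}\frac{(-1)^nq^{n^2}}{(q^2;q^2)_n},
\end{equation*}
and Euler's theorem $\sum_{n\ge0}(-z)^nq^{n^2-n}/(q^2;q^2)_n=(z;q^2)_\infty$ at $z=q$ evaluates this as $(q;q^2)_\infty-1$, which is precisely what produces the right-hand side $1-(q;q^2)_\infty$. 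So the correct skeleton is: specialize at $z=-q$, simplify the product quotient as above, and finish with Euler's theorem; your proposal as written would stall at both the product simplification and the evaluation of $\nu$.
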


\begin{proof}
Let $z=-q$ in \eqref{alpha=-zeqn}, so that
\begin{align}\label{corb}
\nu(-q,q;-q)=\sum_{n=0}^{\i}\frac{q^n}{(-q^{n+1};q)_\i}=\df{1}{q}\sum_{n=1}^{\infty}\df{q^n}{(-q^n;q)_{\infty}}.
\end{align}
Recall  Euler's theorem \cite[p.~19, Corollary 2.2, Equation (2.2.6)]{gea}, with $q$ replaced by $q^2$, that is,
\begin{align*}
\sum_{n=0}^{\i}\frac{(-z)^nq^{n^2-n}}{(q^2;q^2)_n}=(z;q^2)_\i,
\end{align*}
Set $z=q$ above to deduce that
\begin{equation}\label{cora}
(q;q^2)_{\infty}-1=\sum_{n=1}^{\infty}\df{(-1)^nq^{n^2}}{(q^2;q^2)_n}=-q\nu(-q,q;-q),
\end{equation}
by \eqref{1}. Combine \eqref{corb} and \eqref{cora} to deduce \eqref{aplha=-z=-qeqn}.
\end{proof}

\subsection{The case $\a=q$ of Theorem \ref{theorem4} and its corollaries}

\begin{theorem}\label{alpha=q}
For $z \in \mathbb{C}$ and $|q|<1$,
\begin{align}
&\sum_{n=0}^{\i}q^n(-zq^n;q)_{n+1}(-zq^{2n+2};q^2)_\i\nonumber\\
&= -\frac{q}{z}\nu\left(\frac{q^2}{z},-\frac{q^2}{z};-q\right)
 +\frac{1}{q}(-z;q^2)_\i \left( -1+\frac{(-q;q)_\i(q^2;q^2)_\i(-q^2/z;q^2)_\i}{(-q^3/z;q^2)_\i}    \right).\label{alpha=qeqn}
\end{align}
\end{theorem}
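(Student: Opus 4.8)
The plan is to specialize Theorem~\ref{theorem4} at $\a=q$ and then simplify the $q$-shifted factorials so that the right-hand side assumes the form claimed in \eqref{alpha=qeqn}. Setting $\a=q$ in \eqref{gsayeqn1}, the factors $(-zq/\a;q)_n$ and $(-zq/\a;q)_{\i}$ become $(-z;q)_n$ and $(-z;q)_{\i}$, while the last series on the right becomes $(-zq^2;q^2)_{\i}\sum_{n=0}^{\i}\frac{(-q/z;q)_{n+1}q^{n(n+3)/2}}{(-q^3/z;q^2)_{n+1}}$, since $\a^n q^{n(n+1)/2}=q^{n(n+3)/2}$ when $\a=q$. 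The elementary identities I would use next are
\begin{gather*}
\frac{(-zq;q)_{2n}}{(-z;q)_n}=\frac{(-zq^n;q)_{n+1}}{1+z},\qquad
\frac{(-zq;q)_{\i}}{(-z;q)_{\i}}=\frac{1}{1+z},\\
(1+z)(-zq^2;q^2)_{\i}=(-z;q^2)_{\i},
\end{gather*}
each obtained by writing the relevant product $\prod_j(1+zq^j)$ and cancelling common factors. Multiplying the $\a=q$ case of \eqref{gsayeqn1} through by $1+z$ then converts the left-hand side into $\sum_{n=0}^{\i}q^n(-zq^n;q)_{n+1}(-zq^{2n+2};q^2)_{\i}$, the first term on the right into $-\frac{q}{z}\nu\left(\frac{q^2}{z},-\frac{q^2}{z};-q\right)$, and leaves the last term in the shape $(-z;q^2)_{\i}\,S$, where $S:=\sum_{n=0}^{\i}\frac{(-q/z;q)_{n+1}q^{n(n+3)/2}}{(-q^3/z;q^2)_{n+1}}$.

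It then remains to evaluate $S$ in closed form. Here I would shift the summation index, writing $m=n+1$ and using $n(n+3)/2=m(m+1)/2-1$, so that
\begin{equation*}
qS=\sum_{m=1}^{\i}\frac{(-q/z;q)_m\,q^{m(m+1)/2}}{(-q^3/z;q^2)_m}=\left(\sum_{m=0}^{\i}\frac{(-q/z;q)_m\,q^{m(m+1)/2}}{(-q^3/z;q^2)_m}\right)-1.
\end{equation*}
The remaining sum is exactly the left-hand side of the $q$-analogue of Gauss's second theorem \eqref{gauss} with $b=q$ (so that the factor $(b;q)_m=(q;q)_m$ cancels the $(q;q)_m$ in the denominator of \eqref{gauss}) and $a=-q/z$; with these choices $qab=-q^3/z$, $aq=-q^2/z$, $bq=q^2$, and \eqref{gauss} gives
\begin{equation*}
\sum_{m=0}^{\i}\frac{(-q/z;q)_m\,q^{m(m+1)/2}}{(-q^3/z;q^2)_m}=\frac{(-q;q)_{\i}(q^2;q^2)_{\i}(-q^2/z;q^2)_{\i}}{(-q^3/z;q^2)_{\i}}.
\end{equation*}
Hence $S=\frac{1}{q}\left(-1+\frac{(-q;q)_{\i}(q^2;q^2)_{\i}(-q^2/z;q^2)_{\i}}{(-q^3/z;q^2)_{\i}}\right)$, and multiplying by $(-z;q^2)_{\i}$ reproduces precisely the second term on the right-hand side of \eqref{alpha=qeqn}. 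Combining this with the first term completes the proof.

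The only step that is not pure bookkeeping is recognizing that $S$ is summable: once one notices that \eqref{gauss} with $b=q$ leaves a free parameter $a$ whose choice $a=-q/z$ matches all the base-$q^2$ factorials that occur, everything else follows mechanically. Two minor points worth recording are that the exponent $n(n+3)/2$ is always a nonnegative integer (since $n(n+3)=n(n+1)+2n$), so no half-integer powers of $q$ actually appear, and that the stray $-1$ inside the parentheses of \eqref{alpha=qeqn} is produced precisely by discarding the $m=0$ term when passing from $\sum_{m\ge1}$ to $\sum_{m\ge0}$ above.
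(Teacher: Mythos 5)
Your proposal is correct and follows essentially the same route as the paper: specialize Theorem \ref{theorem4} at $\a=q$, shift the index of the resulting series to peel off a $-1$, evaluate the shifted sum by the $q$-analogue of Gauss's second theorem \eqref{gauss} (your choice $a=-q/z$, $b=q$ is the paper's $a=q$, $b=-q/z$ under the symmetry of that formula), and clear the factor $1+z$. No gaps.
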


\begin{proof}
Putting $\a =q$  in \eqref{gsayeqn1}, we find that
\begin{align*}
\sum_{n=0}^{\i}\frac{(-zq;q)_{2n}(-zq^{2n+2};q^2)_\i q^n}{\left( -z;q\right)_{n}}   =& -\frac{q}{z}\frac{(-zq;q)_{\i}}{\left( -z;q\right)_{\i}}\nu\left(\frac{q^2}{z},-\frac{q^2}{z};-q\right)\nonumber\\
&+(-zq^2;q^2)_{\infty}\sum_{n=0}^{\infty}\frac{\left(-q/z;q\right)_{n+1}q^{n(n+3)/2}}{\left(- q^3/z;q^2\right)_{n+1}}\\
=&-\frac{q}{z}\frac{(-zq;q)_{\i}}{\left( -z;q\right)_{\i}}\nu\left(\frac{q^2}{z},-\frac{q^2}{z};-q\right)\\
&+\frac{1}{q}(-zq^2;q^2)_{\infty}\left(-1+\sum_{n=0}^{\infty}\frac{\left(-q/z;q\right)_{n}q^{n(n+1)/2}}{\left(- q^3/z;q^2\right)_{n}}\right).
\end{align*}
Letting $a=q$ and $b=-q/z$ in \eqref{gauss}, and then using the resulting identity on the right-hand side above, we deduce that
\begin{align*}
\sum_{n=0}^{\i}\frac{(-zq;q)_{2n}(-zq^{2n+2};q^2)_\i q^n}{\left( -z;q\right)_{n}}=&-\frac{q}{z}\frac{(-zq;q)_{\i}}{\left( -z;q\right)_{\i}}\nu\left(\frac{q^2}{z},-\frac{q^2}{z};-q\right)\\
&+\frac{1}{q}(-zq^2;q^2)_{\infty}\left( -1+\frac{(-q;q)_\i(q^2;q^2)_\i(-q^2/z;q^2)_\i}{(-q^3/z;q^2)_\i}    \right).
\end{align*}
Now multiply both sides by $(1+z)$ to deduce \eqref{alpha=qeqn}.
\end{proof}

Ramanujan's third order mock theta function $\phi(q)$ is defined by
\begin{align}\label{phiq}
\phi(q):=\sum_{n=0}^{\infty} \frac{q^{n^2}}{(-q^2;q^2)_n}.
\end{align}
A corollary of Theorem \ref{alpha=q} gives the corrected version of \cite[Theorem 4.2]{ady1} involving $\phi(q)$.

\begin{corollary}\label{correctedphi}
With $\phi(q)$ defined by \eqref{phiq},
\begin{align}\label{phiqa}
\sum_{n=0}^{\i}q^n(-q^n;q)_{n}(-q^{2n+1};q^2)_\i =1- \phi(q)+(q^2;q^2)_\i (-q;q^2)^{3}_\i.
\end{align}
\end{corollary}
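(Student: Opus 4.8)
The plan is to deduce Corollary \ref{correctedphi} from Theorem \ref{alpha=q} by specializing $z$ and cleaning up the resulting $q$-products. The only subtlety is the choice of $z$: the left-hand side of \eqref{phiqa} is $\sum_{n\geq 0}q^n(-q^n;q)_n(-q^{2n+1};q^2)_\infty$, whereas the left-hand side of \eqref{alpha=qeqn} is $\sum_{n\geq 0}q^n(-zq^n;q)_{n+1}(-zq^{2n+2};q^2)_\infty$. Matching the infinite product forces $zq^{2n+2}\mapsto q^{2n+1}$, i.e. $z=q^{-1}$; and then $(-zq^n;q)_{n+1}=(-q^{n-1};q)_{n+1}=(1+q^{n-1})(-q^n;q)_n$, so the left-hand side of \eqref{alpha=qeqn} does \emph{not} reduce to the left-hand side of \eqref{phiqa} on the nose. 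So I expect the correct route is $z=q$ rather than $z=q^{-1}$: with $z=q$, the product $(-zq^n;q)_{n+1}=(-q^{n+1};q)_{n+1}$ and $(-zq^{2n+2};q^2)_\infty=(-q^{2n+3};q^2)_\infty$, which still is not quite $(-q^n;q)_n(-q^{2n+1};q^2)_\infty$. I would therefore first experiment with the two natural candidates $z=\pm q$ (and, if needed, shift the summation index) to see which makes the left side of \eqref{alpha=qeqn} collapse to $\sum_n q^n(-q^n;q)_n(-q^{2n+1};q^2)_\infty$; a short computation with the factor $(1+zq^n)$ pulled out of $(-zq^n;q)_{n+1}$ should settle it, and I anticipate an index shift $n\mapsto n-1$ combined with $z=q$ does the job, absorbing the extra $(1+q)$-type factors into the $n=0$ term.

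Once $z$ is fixed correctly, the right-hand side is routine. First I would identify the partial theta piece: $\nu(q^2/z,-q^2/z;-q)$ with the chosen $z$ becomes $\nu$ of a concrete pair, and by the definition \eqref{1} together with the ${}_1\psi_1$-type identities recorded in Section \ref{prelim} (in particular the Jacobi triple product \eqref{jtpi} applied to $\sum_n q^{n^2}/(-q^2;q^2)_n$-type sums) it should reduce to $\phi(q)$ up to an explicit factor; indeed $\phi(q)=\sum_{n\geq 0}q^{n^2}/(-q^2;q^2)_n$ is exactly of the shape produced by $\nu(\alpha,z;-q)=\sum_n \alpha^n q^{n(n+1)}/\ldots$ after the substitution and simplification. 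Then I would simplify the second bracket $\frac1q(-z;q^2)_\infty\big(-1+\frac{(-q;q)_\infty(q^2;q^2)_\infty(-q^2/z;q^2)_\infty}{(-q^3/z;q^2)_\infty}\big)$ for the chosen value of $z$: the $(-q^2/z;q^2)_\infty/(-q^3/z;q^2)_\infty$ ratio telescopes to a single factor, $(-q;q)_\infty(q^2;q^2)_\infty=(q^2;q^2)_\infty/(q;q^2)_\infty\cdot(q;q^2)_\infty\cdots$ — more precisely $(-q;q)_\infty=1/(q;q^2)_\infty$, so the product becomes $(q^2;q^2)_\infty/(q;q^2)_\infty$, and with the prefactor $(-z;q^2)_\infty$ and the $\frac1q$ this should collapse to the advertised $(q^2;q^2)_\infty(-q;q^2)_\infty^3$ plus the isolated constant $1$ coming from the ``$-1$'' inside the bracket (the $\frac1q$ being cancelled by a stray $q$ from the left-hand-side index shift, or by multiplying through by an appropriate factor as in the proof of Theorem \ref{alpha=q} where both sides were multiplied by $(1+z)$).

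\textbf{The main obstacle} I foresee is purely bookkeeping: getting the power of $q$ and the various $(1+q^k)$ factors to match exactly when passing from the general $z$ in \eqref{alpha=qeqn} to the special value, since \eqref{phiqa} has no free $1/q$ or $(1+z)$ floating around — one must verify these cancel. Concretely, after substituting the value of $z$ I would (i) pull the factor $(1+zq^n)$ (or $(1+zq^{n-1})$ after shifting) out of the $q$-Pochhammer on the left, splitting the sum into the $n=0$ term plus a reindexed tail matching $\sum_n q^n(-q^n;q)_n(-q^{2n+1};q^2)_\infty$; (ii) rewrite $\nu(q^2/z,-q^2/z;-q)$ via \eqref{1} and recognize $\phi(q)$ using the triple product identity as in the derivation of \eqref{andrewsex6}; (iii) telescope the product ratio and use $(-q;q)_\infty=1/(q;q^2)_\infty$ to land on $(q^2;q^2)_\infty(-q;q^2)_\infty^3$; and (iv) collect the leftover constants to produce the ``$1-$'' in \eqref{phiqa}. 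Every step is an identity already available in Section \ref{prelim} or a one-line Pochhammer manipulation, so no genuinely new idea is needed; the ``corrected version'' remark in the statement signals precisely that the care here is in the constant/prefactor, which is exactly where \cite[Theorem 4.2]{ady1} presumably erred.
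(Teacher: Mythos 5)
Your plan is exactly the paper's proof: set $z=q$ in Theorem \ref{alpha=q}, shift the summation index $n\mapsto n-1$ on the left (the displaced $n=0$ term $(-q;q^2)_\infty$ cancels against the $-1$ inside the bracket on the right), note that $\nu(q,-q;-q)=\sum_{n\ge0}q^{n^2+2n}/(-q^2;q^2)_{n+1}=\tfrac1q(\phi(q)-1)$, and collapse the products via $(-q;q)_\infty=(-q;q^2)_\infty(-q^2;q^2)_\infty$. The only inessential detour is invoking the triple product to recover $\phi(q)$ — a simple reindexing of the $\nu$-series suffices — so the proposal is correct and matches the paper's route.
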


\begin{proof}
Let $z=q$ in Theorem \ref{alpha=q} and simplify.
\end{proof}

Corollary \ref{correctedphi} has the following companion.

\begin{corollary}\label{z=qTh10.12}  We have
\begin{align}\label{eqnz=qTh10.12}
\sum_{n=0}^{\i}q^n(-q^n;q)_{n}(-q^{2n+1};q^2)_\i&= 1+2q~\nu(q,-1;q).
\end{align}
\end{corollary}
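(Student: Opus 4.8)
\textbf{Proof proposal for Corollary \ref{z=qTh10.12}.}

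The plan is to obtain \eqref{eqnz=qTh10.12} from Theorem \ref{alpha=q} by specializing $z=q$, exactly as in the proof of Corollary \ref{correctedphi}, but keeping the $\nu$-term intact rather than converting it into $\phi(q)$. First I would set $z=q$ in \eqref{alpha=qeqn}. The left-hand side becomes $\sum_{n=0}^{\infty}q^n(-q^{n+1};q)_{n+1}(-q^{2n+3};q^2)_\infty$; a routine index shift (pulling the factor $(1+q^n)$ out, or equivalently rewriting $(-q^{n+1};q)_{n+1}=(-q^n;q)_{n+1}/(1+q^n)\cdot(1+q^{2n+1})$-type manipulations) should bring this into the form $\sum_{n=0}^{\infty}q^n(-q^n;q)_n(-q^{2n+1};q^2)_\infty$, matching the left side of \eqref{eqnz=qTh10.12}; in fact this is the very same left-hand side that already appears in Corollary \ref{correctedphi}, so I can simply quote that identification. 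On the right-hand side, the first term becomes $-q\cdot q^{-1}\nu(q,-q;-q)=-\nu(q,-q;-q)$, wait --- more carefully, with $z=q$ the argument $q^2/z=q$ and $-q^2/z=-q$, so the term is $-\dfrac{q}{z}\nu(q^2/z,-q^2/z;-q)\big|_{z=q}=-\nu(q,-q;-q)$.

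Next I would simplify the second term on the right of \eqref{alpha=qeqn} at $z=q$: it becomes $\dfrac{1}{q}(-q;q^2)_\infty\left(-1+\dfrac{(-q;q)_\infty(q^2;q^2)_\infty(-q;q^2)_\infty}{(-q^2;q^2)_\infty}\right)$. Using $(-q;q)_\infty=(-q;q^2)_\infty(-q^2;q^2)_\infty$ the quotient collapses, and using $(q^2;q^2)_\infty(-q^2;q^2)_\infty=(q^4;q^4)_\infty$ together with the standard consequences of Euler's identity, the bracketed expression should reduce so that the whole second term equals $1+2q\,\nu(q,-1;q)+\nu(q,-q;-q)$ --- i.e., precisely what is needed so that, after adding back $-\nu(q,-q;-q)$ from the first term, the total right-hand side is $1+2q\,\nu(q,-1;q)$. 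The key algebraic fact driving this is a relation expressing $\nu(q,-q;-q)$ and the theta-quotient $(-q;q^2)_\infty(q^2;q^2)_\infty(-q;q^2)_\infty/(-q^2;q^2)_\infty$ in terms of $\nu(q,-1;q)$; concretely, from the definition \eqref{1} one has $\nu(q,-q;-q)=\sum_{n\ge0}q^{2n^2+... }/\cdots$ and $\nu(q,-1;q)=\sum_{n\ge0}q^{n^2+2n}/(q;q^2)_{n+1}$, and these are linked via Theorem \ref{newrelation} (with appropriate substitutions $\a=q$, and the two choices $z=-q,q\mapsto -q$ versus $z=-1$).

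The cleanest route, which I would actually write out, is: apply Theorem \ref{newrelation} twice to rewrite both $\nu(q,-q;-q)$ and the theta-product in terms of $\omega_1$ and partial theta series, observe the $\omega_1$ contributions and the partial theta $\sum q^{n^2+n}$ contributions cancel against each other after using $\sum_{n=0}^\infty q^{n^2+n}=(q^2;q^2)_\infty(-q^2;q^2)_\infty^2$ (quoted in the excerpt), leaving only a term proportional to $\nu(q,-1;q)$. I expect the main obstacle to be bookkeeping: tracking the several infinite products $(-q;q)_\infty,(q^2;q^2)_\infty,(-q;q^2)_\infty,(-q^2;q^2)_\infty$ and their interrelations carefully enough that the cancellations are transparent, and getting the factor $2q$ and the additive $1$ exactly right. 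Once the products are consolidated via $(a;q)_\infty=(a;q^2)_\infty(aq;q^2)_\infty$ and $1/(q;q^2)_\infty=(-q;q)_\infty$, the identity should drop out; there is no deep new input beyond Theorems \ref{newrelation} and \ref{alpha=q} and the Jacobi-triple-product corollary already recorded.
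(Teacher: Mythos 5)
Your plan diverges from the paper's (which quotes Corollary \ref{correctedphi} together with the reciprocity lemma \eqref{fromreciprocity} at $\a=z=1/q$), and it contains a concrete false step at the very start. Setting $z=q$ in \eqref{alpha=qeqn} gives the left-hand side
$\sum_{n=0}^{\infty}q^n(-q^{n+1};q)_{n+1}(-q^{2n+3};q^2)_\infty=\sum_{n=0}^{\infty}q^n(-q^{n+1};q)_{n}(-q^{2n+1};q^2)_\infty$,
and no ``routine index shift'' turns this into $\sum_{n=0}^{\infty}q^n(-q^{n};q)_{n}(-q^{2n+1};q^2)_\infty$: the two series already differ at $q^2$ (coefficients $1$ and $2$, respectively). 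The correct relation, obtained by shifting $n\mapsto n+1$ in the tail of the target series, is
\begin{equation*}
\sum_{n=0}^{\infty}q^n(-q^{n};q)_{n}(-q^{2n+1};q^2)_\infty=(-q;q^2)_\infty+q\sum_{n=0}^{\infty}q^n(-q^{n+1};q)_{n+1}(-q^{2n+3};q^2)_\infty,
\end{equation*}
so an overall factor of $q$ and the additive term $(-q;q^2)_\infty$ must be carried through. Without them your claimed cancellation on the right-hand side is numerically false: at $z=q$ the theta-product term of \eqref{alpha=qeqn} equals $\tfrac1q(-q;q^2)_\infty\left(-1+(-q;q^2)_\infty^2(q^2;q^2)_\infty\right)=2+2q+0\cdot q^2+\cdots$, whereas $1+2q\,\nu(q,-1;q)+\nu(q,-q;-q)=2+2q+q^2+\cdots$.

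The second half of the argument is also only asserted, and it is where the real content lies: one needs $\phi(q)+2q\,\nu(q,-1;q)=(-q;q^2)_\infty^3(q^2;q^2)_\infty$, equivalently a relation between $\nu(q,-q;-q)$ (note $q\,\nu(q,-q;-q)=\phi(q)-1$) and $\nu(q,-1;q)$. The paper derives this from \eqref{fromreciprocity} with $\a=z=1/q$, i.e., from Ramanujan's reciprocity theorem. Your proposed substitute --- applying Theorem \ref{newrelation} twice --- can be made to work, but only if you additionally use the symmetry $\omega_1(\a,z;q)=\omega_1(1/\a,-z/\a;q)$ (as in Corollary \ref{frelcor}): then the $\omega_1$ contributions at $(\a,z)=(q,-1)$ and $(1/q,1/q)$ are the same function and cancel in the combination $2q\,\nu(q,-1;q)+2\nu(1/q,1/q;q)=2q\,\nu(q,-1;q)+\phi(q)$, after which the two partial theta series assemble into $\sum_{n=-\infty}^{\infty}q^{n^2}$ and the Jacobi triple product finishes the proof. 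The substitutions you actually name ($z=-q$ with $q\mapsto-q$ versus $z=-1$) do not set this cancellation up. As written, the proposal is an outline whose two decisive steps are respectively incorrect and unverified.
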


\begin{proof}
Employing \eqref{fromreciprocity} with $\a=1/q$ and $z=1/q$, we find that
\begin{align}\label{slight}
\nu(1/q,1/q;q)+q\nu(q,-1;q)&= \frac{(-q,-q,q^2;q^2)_\i}{(-1,q;q^2)_\i}.
\end{align}
Using \eqref{1}, we rewrite \eqref{slight} in the slightly modified form
\begin{align*}
\sum_{n=0}^{\infty}\frac{q^{n^2}}{(-q^2;q^2)_n}+2q\nu(q,-1;q)&=\frac{(-q;q^2)^2_\i(q^2;q^2)_\i}{(-q^2;q^2)_\i(q;q^2)_\i}.
\end{align*}
Using \eqref{phiq} and simplifying, we deduce that
\begin{align*}
\phi(q)+2q~\nu(q,-1;q) =(-q;q^2)^3_\i(q^2;q^2)_\i.
\end{align*}
Use the identity above in \eqref{phiqa} to deduce \eqref{eqnz=qTh10.12}.
\end{proof}

Let $p'(n)$ denote the number of partitions of $n$ into non-negative parts in which the smallest part can occur at most twice, all other parts must be distinct, and in which all even parts are strictly less than twice the smallest part. For example, if $n=5$, then the admissible partitions are $5$, $5+0$, $5+0+0$ and $3+2$. Hence $p'(5)=4$. One can check that $\{p'(n)\}_{n=0}^{25}=\{1, 2, 2, 2, 4, 4, 4, 6, 6, 8, 10, 10, 12, 14, 16, 18, 22, 24, 26, 32, 34, 38, 44, 48, 54, 62\}$. This leads us to suggest that $p'(n)$ for any $n\geq1$ might be even. That this is the case is an immediate consequence of Corollary \ref{z=qTh10.12}. This is given below.

\begin{corollary}\label{even}
Let $p'(n)$ be defined as above. Then $p'(0)=1$ and $p'(n)$ is even for any $n\geq1$.
\end{corollary}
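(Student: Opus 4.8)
The plan is to establish a generating-function identity $\sum_{n=0}^{\infty} p'(n) q^n = 1 + 2q\,\nu(q,-1;q)$, after which Corollary \ref{z=qTh10.12} finishes the job immediately: the right-hand side of \eqref{eqnz=qTh10.12} is $1 + 2q\,\nu(q,-1;q)$, so the coefficient of $q^n$ for $n \geq 1$ is twice the coefficient of $q^n$ in $q\,\nu(q,-1;q)$, hence even, while the constant term is $1 = p'(0)$. So the entire content of the corollary is the combinatorial identification of the generating function of $p'(n)$ with the left-hand side of \eqref{eqnz=qTh10.12}, namely $\sum_{n=0}^{\infty} q^n (-q^n;q)_n (-q^{2n+1};q^2)_\infty$.

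First I would fix $n \geq 0$ as the smallest part of a partition counted by $p'(n)$ and decode the three factors of $q^n (-q^n;q)_n (-q^{2n+1};q^2)_\infty$ as a weighted enumeration of such partitions. The factor $q^n$ contributes one copy of the smallest part $n$. The factor $(-q^n;q)_n = (1+q^n)(1+q^{n+1})\cdots(1+q^{2n-1})$ accounts, via the term $1$, for no second copy of $n$ and no extra part in the range $[n, 2n-1]$; via the term $q^n$, for a second copy of $n$ (so $n$ occurs twice); and via a term $q^j$ with $n < j \leq 2n-1$, for one additional distinct part $j$ — note these are exactly the parts, other than the smallest, that are allowed to be less than $2n$, and since $j \in (n, 2n-1]$ they are automatically $< 2n$, consistent with the restriction that every even part be $< 2n$ only mattering here for parts below $2n$. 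Finally $(-q^{2n+1};q^2)_\infty = \prod_{k\geq 0}(1 + q^{2n+1+2k})$ contributes any collection of distinct odd parts that are $\geq 2n+1$; these are odd, hence unaffected by the even-part restriction, and $\geq 2n+1 > 2n$. The one subtlety to check carefully is that these three ingredients are disjoint and exhaustive: parts equal to $n$ (at most twice) from the first two factors, distinct parts in $(n, 2n-1]$ from the middle factor, and distinct odd parts $\geq 2n+1$ from the last factor, with nothing in $[2n, 2n]$ or among even numbers $\geq 2n+1$ — and this is precisely the definition of $p'(n)$ (smallest part $n$, occurring at most twice; all other parts distinct; every even part $< 2n$).

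I would then sum over $n \geq 0$, observing that each partition counted by $p'(m)$ (for some $m$) has a well-defined smallest part $n$ — including the empty partition with the convention that it is counted by the $n=0$ term, where $q^0(-1;q)_0(-q;q^2)_\infty$ needs a moment's care since $(-1;q)_0 = 1$ and the factor $q^0 = 1$ gives the smallest part $0$ occurring once; one checks the $n=0$ block generates exactly the partitions into distinct non-negative parts with all even parts $< 0$, i.e. no positive even parts, which matches $p'$ restricted to smallest part $0$, and in particular the empty/zero partitions give the constant $1$. The main obstacle I anticipate is not any hard computation but the bookkeeping in this last step: making sure the $n=0$ case and the treatment of the part $0$ (which "can occur at most twice" and the role of $5+0, 5+0+0$ in the stated example) are handled so that the generating function matches $\sum_{n\ge 0} q^n(-q^n;q)_n(-q^{2n+1};q^2)_\infty$ on the nose, with $p'(0)=1$ coming out correctly. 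Once $\sum_{n} p'(n)q^n = \sum_{n\ge0} q^n(-q^n;q)_n(-q^{2n+1};q^2)_\infty$ is verified, \eqref{eqnz=qTh10.12} gives $\sum_n p'(n) q^n = 1 + 2q\,\nu(q,-1;q)$, and comparing coefficients of $q^n$ for $n \geq 1$ shows $p'(n)$ is even, while $p'(0) = 1$.
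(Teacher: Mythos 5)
Your proposal is correct and follows essentially the same route as the paper: identify $\sum_n p'(n)q^n$ with $\sum_{n\ge 0}q^n(-q^n;q)_n(-q^{2n+1};q^2)_\infty$ by reading $n$ as the smallest part (the paper makes the ``at most twice'' condition explicit by rewriting $q^n(-q^n;q)_n=(q^n+q^{2n})(-q^{n+1};q)_{n-1}$, which is the same bookkeeping you do with the factor $1+q^n$), and then invoke Corollary \ref{z=qTh10.12} and the factor $2$ in front of $q\,\nu(q,-1;q)$.
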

\begin{proof}
We first show that
\begin{equation}\label{gfeven}
\sum_{n=0}^{\infty}p'(n)q^n=\sum_{n=0}^{\i}q^n(-q^n;q)_{n}(-q^{2n+1};q^2)_\i.
\end{equation}
This is easy to see once we regard the index of summation $n$ in the series on the right-hand side as denoting the smallest part in a partition generated by it. Then the summand is given by
\begin{equation*}
q^n(-q^n;q)_{n}(-q^{2n+1};q^2)_\i=(q^n+q^{2n})(-q^{n+1};q)_{n-1}(-q^{2n+1};q^2)_\i.
\end{equation*}
Clearly, this implies that the smallest part $n$ can occur at most twice. Also, all other parts must be distinct, and no even part is as large as twice the smallest part. This proves \eqref{gfeven}.

Now the result follows immediately by invoking Corollary \eqref{z=qTh10.12} and noting that there is a $2$ in front of $q~\nu(q,-1;q)$ on its right-hand side.
\end{proof}

\begin{corollary}\label{corc} We have
\begin{align*}
\nu(1/q,1;-q)= \sum_{n=0}^{\i}\frac{q^{n^2}}{(q;q^2)_{n+1}}=1+2q\sum_{n=0}^{\i}\frac{q^{n^2+2n}}{(q;q^2)_{n+1}}=1 +2q~\nu(q,-1;q).
\end{align*}
\end{corollary}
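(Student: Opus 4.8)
The plan is to prove the displayed chain of four quantities by verifying its three consecutive equalities; the two outer ones are mere unwindings of the definition \eqref{1} of $\nu(\a,z;q)$, and only the middle one carries any content.

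First I would dispose of the two ``definitional'' equalities. Substituting $\a=1/q$, $z=1$ and $q\mapsto -q$ into \eqref{1}, and noting that $n^2+n$ is always even, so that $(-q)^{n^2+n}=q^{n^2+n}$, while $(-1\cdot(-q);(-q)^2)_{n+1}=(q;q^2)_{n+1}$, one obtains
\begin{equation*}
\nu\!\left(\tfrac1q,1;-q\right)=\sum_{n=0}^{\infty}\frac{q^{-n}\,q^{n^2+n}}{(q;q^2)_{n+1}}=\sum_{n=0}^{\infty}\frac{q^{n^2}}{(q;q^2)_{n+1}}.
\end{equation*}
Likewise, $\a=q$, $z=-1$ in \eqref{1} gives $\nu(q,-1;q)=\sum_{n\ge0}q^{n^2+2n}/(q;q^2)_{n+1}$, which is precisely the last equality after multiplication by $2q$ and addition of $1$.

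The substantive step is the middle equality
\begin{equation*}
\sum_{n=0}^{\infty}\frac{q^{n^2}}{(q;q^2)_{n+1}}=1+2q\sum_{n=0}^{\infty}\frac{q^{n^2+2n}}{(q;q^2)_{n+1}}.
\end{equation*}
I would reindex both sums by $m=n+1$: writing $q^{n^2+2n}=q^{-1}q^{(n+1)^2}$ collapses the right-hand side to $1+2\sum_{m\ge1}q^{m^2}/(q;q^2)_m$, while the left-hand side becomes $\sum_{m\ge1}q^{(m-1)^2}/(q;q^2)_m$, so it suffices to prove
\begin{equation*}
\sum_{m=1}^{\infty}\frac{q^{(m-1)^2}-2\,q^{m^2}}{(q;q^2)_m}=1 .
\end{equation*}
The hard part will be spotting that this series telescopes; concretely, I would establish
\begin{equation*}
\frac{q^{(m-1)^2}-2\,q^{m^2}}{(q;q^2)_m}=\frac{q^{(m-1)^2}}{(q;q^2)_{m-1}}-\frac{q^{m^2}}{(q;q^2)_m},
\end{equation*}
which follows on clearing denominators from $(q;q^2)_m=(1-q^{2m-1})(q;q^2)_{m-1}$ together with the numerical fact $(m-1)^2+(2m-1)=m^2$. (Alternatively one can split $\tfrac1{1-q^{2m-1}}=1+\tfrac{q^{2m-1}}{1-q^{2m-1}}$ in the left-hand sum and reindex, arriving at the same relation.) Summing over $m\ge1$ then gives $q^0/(q;q^2)_0-\lim_{m\to\infty}q^{m^2}/(q;q^2)_m=1-0=1$, the limit vanishing since $|q|<1$ forces $q^{m^2}\to0$ while $(q;q^2)_m\to(q;q^2)_\infty\neq0$. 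Concatenating the three equalities proves Corollary \ref{corc}; apart from recognizing the antidifference $q^{m^2}/(q;q^2)_m$, everything is routine manipulation of \eqref{1}.
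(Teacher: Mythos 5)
Your proof is correct, but it takes a genuinely different --- and considerably more elementary --- route than the paper. The paper obtains Corollary \ref{corc} as a one-line consequence of two earlier results: setting $z=1/q$ in \eqref{newprrof1} makes its left-hand side coincide with the left-hand side of \eqref{eqnz=qTh10.12}, and comparing the corresponding right-hand sides yields the stated chain; that argument therefore inherits the machinery behind Theorem \ref{theorem4}, the reciprocity identity \eqref{fromreciprocity}, and the $q$-Gauss sum. You instead prove the chain directly from the definition \eqref{1}: the outer equalities are just the specializations $\nu(1/q,1;-q)$ and $\nu(q,-1;q)$ (your parity observation $(-q)^{n^2+n}=q^{n^2+n}$ and the evaluation of $(-z(-q);q^2)_{n+1}$ are both right), and the middle equality reduces, after the shift $m=n+1$, to $\sum_{m\ge1}\bigl(q^{(m-1)^2}-2q^{m^2}\bigr)/(q;q^2)_m=1$, which telescopes via $(q;q^2)_m=(1-q^{2m-1})(q;q^2)_{m-1}$ together with $(m-1)^2+(2m-1)=m^2$; the boundary term vanishes because $(q;q^2)_m\to(q;q^2)_\infty\ne0$ while $q^{m^2}\to0$. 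This is self-contained and arguably cleaner as a proof of the displayed identity itself. What it does not provide --- and what the paper's derivation does --- is the identification of all four expressions with the partition-generating series $\sum_{n\ge0}q^n(-q^n;q)_n(-q^{2n+1};q^2)_\infty$ from \eqref{eqnz=qTh10.12}, which is the reason the corollary appears where it does in the paper.
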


\begin{proof} If we put $z=1/q$ in \eqref{newprrof1}, we find that the left-hand side of \eqref{newprrof1} is the same as the left-hand side of \eqref{eqnz=qTh10.12}.
\end{proof}

\begin{corollary}  We have
\begin{equation}\label{alpha=q, z=-1}
\sum_{n=0}^{\infty}q^n(q^{n};q)_{n+1}(q^{2n+2};q^2)_{\infty}=-\frac{1}{q}\sum_{n=1}^{\infty}\frac{(-1)^nq^{n^2+n}}{(-q^3;q^2)_n}.
\end{equation}
\end{corollary}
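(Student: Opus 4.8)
The plan is to obtain \eqref{alpha=q, z=-1} as the degenerate case $z=-1$ of Theorem \ref{alpha=q}. First I would set $z=-1$ in \eqref{alpha=qeqn}. On the left-hand side $-zq^n=q^n$ and $-zq^{2n+2}=q^{2n+2}$, so it becomes $\sum_{n=0}^{\i}q^n(q^n;q)_{n+1}(q^{2n+2};q^2)_\i$, which is exactly the left-hand side of \eqref{alpha=q, z=-1}; observe in passing that the $n=0$ term already vanishes, since $(q^0;q)_1=1-1=0$. Thus everything comes down to showing that the right-hand side of \eqref{alpha=qeqn} collapses at $z=-1$ to the series claimed in \eqref{alpha=q, z=-1}.

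The key point is that the whole second summand on the right of \eqref{alpha=qeqn} carries the factor $(-z;q^2)_\i$, and at $z=-1$ this equals $(1;q^2)_\i=\prod_{k\ge 0}(1-q^{2k})=0$ because its $k=0$ factor is $1-1=0$. Everything else in that summand remains regular at $z=-1$: there $(-q^2/z;q^2)_\i=(q^2;q^2)_\i$ and $(-q^3/z;q^2)_\i=(q^3;q^2)_\i\neq 0$, so the bracket $-1+(-q;q)_\i(q^2;q^2)_\i(-q^2/z;q^2)_\i/(-q^3/z;q^2)_\i$ stays finite. Hence the second summand is genuinely $0$ at $z=-1$, and \eqref{alpha=qeqn} reduces to $\sum_{n=0}^{\i}q^n(q^n;q)_{n+1}(q^{2n+2};q^2)_\i=q\,\nu(-q^2,q^2;-q)$, since $-q/z=q$ and $\nu(q^2/z,-q^2/z;-q)=\nu(-q^2,q^2;-q)$ when $z=-1$.

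It then remains only to rewrite $q\,\nu(-q^2,q^2;-q)$ in the asserted shape. Expanding by the definition \eqref{1} with base $-q$ (the base $q^2$ of the Pochhammer symbol and the exponent $q^{n^2+n}$ are unaffected by $q\mapsto -q$, since $(-q)^2=q^2$ and $n^2+n$ is even) gives $\nu(-q^2,q^2;-q)=\sum_{n\ge 0}(-1)^nq^{n^2+3n}/(q^3;q^2)_{n+1}$; multiplying this by $q$ and replacing the index $n$ by $n-1$ yields the right-hand side of \eqref{alpha=q, z=-1}. I do not expect a real obstacle here: the identity is a clean specialization, and the only step deserving a moment's care is the vanishing of the second summand, which the regularity remark above settles. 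One could alternatively feed $z=-1$ directly into the derivation of Theorem \ref{alpha=q} from \eqref{gsayeqn1}, tracking how the factor $1+z$ introduced there cancels the denominator $(-z;q)_n$ coming from \eqref{gsayeqn1} at $\a=q$; but working from the already-simplified Theorem \ref{alpha=q} is cleaner.
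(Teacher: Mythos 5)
Your proof follows exactly the paper's route --- the paper's entire argument is ``let $z=-1$ in Theorem \ref{alpha=q} and simplify'' --- and your execution of the specialization is right: the second summand of \eqref{alpha=qeqn} dies because of the factor $(-z;q^2)_{\infty}=(1;q^2)_{\infty}=0$ while the bracket it multiplies stays finite, and the first summand becomes $q\,\nu(-q^2,q^2;-q)$. The one problem is your final identification. You correctly compute $\nu(-q^2,q^2;-q)=\sum_{n\geq 0}(-1)^nq^{n^2+3n}/(q^3;q^2)_{n+1}$, but multiplying by $q$ and shifting $n\mapsto n-1$ gives $-q^{-1}\sum_{n\geq 1}(-1)^nq^{n^2+n}/(q^3;q^2)_{n}$, which is \emph{not} the printed right-hand side of \eqref{alpha=q, z=-1}: the statement has $(-q^3;q^2)_n$ in the denominator, not $(q^3;q^2)_n$. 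Comparing coefficients of $q^4$ settles which is correct: the left-hand side of \eqref{alpha=q, z=-1} gives $+1$, your series $q/(1-q^3)+\cdots$ gives $+1$, while the printed series $q/(1+q^3)+\cdots$ gives $-1$. So the corollary as stated carries a sign typo and your derivation is the sound one; but as written your proof asserts that the computation ``yields the right-hand side'' of the printed formula, which it does not --- you should either flag the misprint explicitly or you have silently dropped a sign at the last step.
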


\begin{proof}
Let $z=-1$ in Theorem \ref{alpha=q}. This leads to \eqref{alpha=q, z=-1} upon simplification.
\end{proof}

\begin{corollary}  We have
\begin{gather*}
\sum_{n=0}^{\i}q^n(q^{n+2};q)_{n+1}(q^{2n+4};q^2)_\i= \frac{1}{q}\sum_{n=0}^{\infty}\df{(-1)^nq^{n(n+1)}}{(q;q^2)_{n+1}}
 -\frac{1}{q}(q^2;q^2)_\i. 
\end{gather*}
\end{corollary}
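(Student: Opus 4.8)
The plan is to derive this final identity as yet another specialization of Theorem \ref{theorem4} (equation \eqref{gsayeqn1}), this time substituting $z=-q$ in the $\alpha=q$ version already recorded in Theorem \ref{alpha=q}. First I would return to \eqref{alpha=qeqn}, which reads
\begin{align*}
\sum_{n=0}^{\i}q^n(-zq^n;q)_{n+1}(-zq^{2n+2};q^2)_\i
= -\frac{q}{z}\nu\left(\frac{q^2}{z},-\frac{q^2}{z};-q\right)
+\frac{1}{q}(-z;q^2)_\i \left(-1+\frac{(-q;q)_\i(q^2;q^2)_\i(-q^2/z;q^2)_\i}{(-q^3/z;q^2)_\i}\right),
\end{align*}
and substitute $z=-q$. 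On the left-hand side, $(-zq^n;q)_{n+1}=(q^{n+1};q)_{n+1}$ and $(-zq^{2n+2};q^2)_\i=(q^{2n+3};q^2)_\i$; a small reindexing ($q^{2n+4}$ versus $q^{2n+3}$) together with the factor $(q^{n+2};q)_{n+1}$ appearing in the statement shows we are really looking at the series $\sum_n q^n(q^{n+2};q)_{n+1}(q^{2n+4};q^2)_\i$, so I would carefully track how the $n=0$ term and the shift in the exponents line up — this bookkeeping is where the ``$n+2$'' in the corollary's statement comes from.

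Next I would simplify the right-hand side. With $z=-q$ the factor $(-z;q^2)_\i=(q;q^2)_\i$, while $(-q^2/z;q^2)_\i=(q;q^2)_\i$ and $(-q^3/z;q^2)_\i=(q^2;q^2)_\i/\,$ wait — more carefully, $-q^2/z=q$ so $(-q^2/z;q^2)_\i=(q;q^2)_\i$ and $-q^3/z=q^2$ so $(-q^3/z;q^2)_\i=(q^2;q^2)_\i$; hence the big bracket collapses via the telescoping $(-q;q)_\i(q^2;q^2)_\i(q;q^2)_\i=(-q;q)_\i(q;q)_\i=(q^2;q^2)_\i$ (using $(q;q)_\i=(q;q^2)_\i(q^2;q^2)_\i$ and $(-q;q)_\i(q;q^2)_\i=1$). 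So the second term on the right becomes $\frac{1}{q}(q;q^2)_\i(-1+1)=0$ unless I have mis-simplified — in fact one must be more careful, since the net second term is $\frac{1}{q}(q;q^2)_\i\big(-1+(q^2;q^2)_\i/(q;q^2)_\i\big)=\frac{1}{q}\big((q^2;q^2)_\i-(q;q^2)_\i\big)$, matching the $-\frac{1}{q}(q^2;q^2)_\i$ piece of the claim up to a spare $(q;q^2)_\i$ term that must be absorbed elsewhere. For the first term, $\frac{q^2}{z}=-q$ and $-\frac{q^2}{z}=q$, so $-\frac{q}{z}\nu(-q,q;-q)=\nu(-q,q;-q)=\sum_{n\ge0}\frac{(-1)^nq^{n^2+n}}{(-q^3;q^2)_n}$ after recalling $(-zq;q^2)_{n+1}$ with $z=q$, $q\to-q$ — i.e. applying the definition \eqref{1} of $\nu(\a,z;q)$ and the evaluation already used in \eqref{cora}. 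Finally I would reconcile the residual $(q;q^2)_\i$ term with the split of the $\nu$-series (peeling off its $n=0$ term, which is $1$) so that the stray constants cancel and one is left with exactly $\frac{1}{q}\sum_{n\ge0}\frac{(-1)^nq^{n(n+1)}}{(q;q^2)_{n+1}}-\frac{1}{q}(q^2;q^2)_\i$; this is really just a matter of observing that $\nu(-q,q;-q)$ rewritten with denominator $(q;q^2)_{n+1}$ has $n=0$ term $1/(1-q)$, and regrouping.

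The main obstacle I anticipate is purely notational: keeping the $q$-Pochhammer shifts straight through the substitution $z=-q$ (the difference between $(q^{n+1};q)_{n+1}$ and $(q^{n+2};q)_{n+1}$, and between $(q;q^2)_{n}$, $(q;q^2)_{n+1}$, $(-q^3;q^2)_n$ in the two forms of the $\nu$-sum), and making sure that the spare $(q;q^2)_\i$ and the constant $1$'s cancel exactly rather than leaving a stray term. There is no deep analytic content beyond Theorem \ref{theorem4} and the elementary Euler-type evaluation of $\nu(-q,q;-q)$ from \eqref{cora}; everything else is careful algebra with infinite products, and I would present it as ``Let $z=-q$ in Theorem \ref{alpha=q} and simplify, using \eqref{cora}.''
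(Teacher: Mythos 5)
Your substitution is off by a power of $q$, and this is a genuine gap rather than bookkeeping. With $z=-q$ in Theorem \ref{alpha=q} the left-hand side becomes $\sum_{n\ge0}q^n(q^{n+1};q)_{n+1}(q^{2n+3};q^2)_\infty$, and no reindexing converts this into $\sum_{n\ge0}q^n(q^{n+2};q)_{n+1}(q^{2n+4};q^2)_\infty$: the finite product covers the exponents $n+1,\dots,2n+1$ rather than $n+2,\dots,2n+2$, and the infinite product has the ``wrong parity'' tail, so the two series are genuinely different (a shift $n\mapsto n\pm1$ cannot fix both the $q^n$ prefactor and the subscript $n+1$ simultaneously). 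Moreover, your \emph{first} evaluation of the bracket was the correct one: with $z=-q$ one has $-q^2/z=q$, $-q^3/z=q^2$, and $(-q;q)_\infty(q^2;q^2)_\infty(q;q^2)_\infty/(q^2;q^2)_\infty=(-q;q)_\infty(q;q^2)_\infty=1$ by Euler, so the entire second term on the right vanishes; the ``more careful'' recomputation that leaves a spare $(q;q^2)_\infty$ drops the factor $(-q;q)_\infty(q;q^2)_\infty$ and is the erroneous step. What $z=-q$ actually yields is $\sum_{n\ge0}q^n(q^{n+1};q)_{n+1}(q^{2n+3};q^2)_\infty=\nu(-q,q;-q)=\bigl(1-(q;q^2)_\infty\bigr)/q$ by \eqref{cora} --- a true but different identity, whose right side is an infinite product and cannot be regrouped into the sum $\sum_{n\ge0}(-1)^nq^{n(n+1)}/(q;q^2)_{n+1}$ of the corollary.

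The specialization you want is $z=-q^2$. Then $(-zq^n;q)_{n+1}=(q^{n+2};q)_{n+1}$ and $(-zq^{2n+2};q^2)_\infty=(q^{2n+4};q^2)_\infty$, matching the stated left side exactly. On the right, $-q/z=1/q$ and $\nu(q^2/z,-q^2/z;-q)=\nu(-1,1;-q)=\sum_{n\ge0}(-1)^nq^{n^2+n}/(q;q^2)_{n+1}$ gives the first term, while $(-z;q^2)_\infty=(q^2;q^2)_\infty$ and $(-q^2/z;q^2)_\infty=(1;q^2)_\infty=0$ annihilate the fraction inside the bracket, leaving exactly $-\tfrac1q(q^2;q^2)_\infty$. (Alternatively, as the paper also notes, set $z=-1$ in \eqref{pnnz}: the $n=0$ term of the left side is $(q^2;q^2)_\infty$, and after peeling it off the shift $n\mapsto n+1$ \emph{does} produce $q\sum_{n\ge0}q^n(q^{n+2};q)_{n+1}(q^{2n+4};q^2)_\infty$, giving the corollary upon dividing by $q$.)
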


\begin{proof} Set $z=-q^2$ in Theorem \ref{alpha=q}, or set
$z=-1$ in \eqref{pnnz}.
\end{proof}

\section{Concluding remarks}\label{cr}
In this paper, we have partially answered the question posed by Li and Yang in their paper \cite{liyangmock} and obtained generalizations of the Andrews-Yee identities for $\omega_1(\a, z;q)$ and $\nu(\a,z;q)$, defined in \eqref{2b} and \eqref{1}, respectively. While $\nu(\a,z;q)$ is essentially equivalent to their trivariate generalization of $\nu(q)$, the function $\omega_1(\a, z;q)$  is different from the trivariate generalization of $\omega(q)$ that they consider in their paper.

This raises an important question:  Are there analogues of Theorems \ref{theorem3} and \ref{theorem4} for the trivariate versions considered
by Li and Yang? Especially, in view of \eqref{omegaG}, obtaining such an analogue for $\omega_0(y,z;q)$ would be crucial, since $G(a,b;q)$ is related to the universal mock theta function $g_3(x;q)$. This appears to be elusive so far. However, one might begin by first obtaining an analogue of our functional relation in Theorem \ref{newrelation}, if it exists.

Also, it appears difficult to obtain a generalization of \eqref{pwnz} of the type given in Theorem \ref{theorem4}, and similarly, a generalization of \eqref{pnnz} of the type given in Theorem \ref{theorem3}. However, seeking these results definitely seems to be worthy of merit in view of their applications.

\begin{center}
\textbf{Acknowledgements}
\end{center}

The authors thank Shane Chern and Ae Ja Yee for their important suggestions for our paper. The second and the third authors sincerely thank the SPARC project SPARC/2018-2019/P567/SL for funding their stay at the University of Illinois at Urbana-Champaign in January 2020 and January-May 2020, respectively, where part of this work was carried out. They also thank the University of Illinois at Urbana-Champaign for its hospitality.


\begin{thebibliography}{00}
\bibitem{rpa}
R.~P.~Agarwal, \emph{A family of basic hypergeometric and combinatorial identities and certain summation formulae}, Indian J.~Pure Appl.~Math.~\textbf{12}
 (1981), 728--737.

\bibitem{geaquart}
G.~E.~Andrews, \emph{On basic hypergeometric series, mock theta functions, and partitions \textup{(I)}}, Q.~J.~Math.~\textbf{17} (2) (1966), 64--80.

\bibitem{geaquart2}
G.~E.~Andrews, \emph{On basic hypergeometric series, mock theta functions, and partitions \textup{(II)}}, Q.~J.~Math.~\textbf{17} (2) (1966), 132--143.

\bibitem{qana}
G.~E.~Andrews, \emph{On the $q$-analog of Kummer's theorem and applications},  Duke Math.~J. \textbf{40} (1973), 525–-528.

\bibitem{gea}
G.~E.~Andrews, \emph{The Theory of Partitions}, Addison--Wesley, Reading, MA,
1976; reissued: Cambridge University Press, Cambridge, 1998.

\bibitem{yesto}
G.~E.~Andrews, \emph{Partitions: Yesterday and Today}, New Zealand Math. Soc., Wellington, 1979.

\bibitem{gea90}
G.~E.~Andrews, \emph{Ramanujan's ``Lost" notebook: \textup{I}. Partial theta functions}, Adv.~Math.~\textbf{41} (1981), 137--172.

\bibitem{andrewsevenbelowodd}
G.~E.~Andrews, \emph{Integer partitions with even parts below odd parts and the mock theta functions}, Ann.~Comb.~\textbf{22} (2018), 433--445.

\bibitem{AB2}
G.~E.~Andrews and B.~C.~Berndt, \emph{Ramanujan's Lost Notebook, Part II}, Springer, New York, 2009.

\bibitem{AB5}
G.~E.~Andrews and B.~C.~Berndt, \emph{Ramanujan's Lost Notebook, Part V}, Springer, New York, 2018.

\bibitem{ady1}
G.~E.~Andrews, A.~Dixit and A.~J.~Yee, \emph{Partitions associated with the Ramanujan/Watson mock theta functions $\omega(q), \nu(q)$ and $\phi(q)$}, Research in Number Theory,~\textbf{1} Issue 1 (2015), 1--25.

\bibitem{andrewsyeemock}
G.~E.~Andrews and A.~J.~Yee, \emph{Some identities associated with mock theta functions $\omega(q)$ and $\nu(q)$}, Ramanujan J.~\textbf{48} (2019), 613--622.

\bibitem{choi}
Y.-S.~Choi, \emph{The basic bilateral hypergeometric series and the mock theta functions}, Ramanujan J.~\textbf{24} (2011), 345--386.

\bibitem{fine}
N.~J.~Fine, \emph{Basic Hypergeometric Series and Applications}, Amer. Math. Soc. Providence, 1988.

\bibitem{gasper}
G.~Gasper and M.~Rahman, \emph{Basic Hypergeometric Series}, 2nd ed., Cambridge University Press, Cambridge, 2004.

\bibitem{gordmac}
B.~Gordon and R.~J.~McIntosh, \emph{A survey of classical mock theta functions}, in:
\emph{Partitions, q-Series, and Modular Forms}, K.~Alladi and F.~Garvan (eds.), Developments in Mathematics, vol.~23, Springer, New York, 2012, pp.~95--144.

\bibitem{kang}
S.-Y.~ Kang, \emph{Generalizations of Ramanujan's reciprocity theorem and their applications}. J.~London Math.~Soc.~(2) \textbf{75} (2007),  18--34.

\bibitem{liyangmock}
F.~Z.~K.~Li and J.~Y.~X.~Yang, \emph{Combinatorial proofs for identities related with generalizations of the mock theta functions $\omega(q)$ and $\nu(q)$}, Ramanujan J.~\textbf{50} (2019), 527--550.

\bibitem{ramnote}
S.~Ramanujan, \emph{Notebooks of Srinivasa Ramanujan} (2 volumes), Tata Institute of Fundamental Research, Bombay, 1957; second ed., 2012.

\bibitem{lnb}
S.~Ramanujan, \emph{The Lost Notebook and Other Unpublished
Papers}, Narosa, New Delhi, 1988.

\bibitem{wangma}
J.~Wang and X.~Ma, \emph{On the Andrews–Yee identities associated with mock theta functions}, Ann.~Comb.~\textbf{23} (2019), 1105--1122.

\bibitem{watsonaddress}
G.~N.~Watson, \emph{The final problem: an account of the mock theta functions}, J.~London Math.~Soc.~\textbf{11} (1936), 55--80.
\end{thebibliography}
\end{document}